\documentclass[12pt,a4paper]{article}
\usepackage{amsmath,amsthm,amssymb,lineno}
\usepackage{txfonts,pxfonts,upgreek} 
\usepackage{titlesec}
\usepackage{caption}
\usepackage{subcaption}
\titlespacing*{\section}
{0pt}{5.5ex plus 1ex minus .2ex}{4.3ex plus .2ex}
\titlespacing*{\subsection}
{0pt}{5.5ex plus 1ex minus .2ex}{4.3ex plus .2ex}
\usepackage{amsmath,  amsthm, amssymb, color}
\usepackage{enumerate, hyperref}
\usepackage{enumitem}

\usepackage{graphicx}
\usepackage{tikz}
\usetikzlibrary{bbox}

\usepackage[open,openlevel=2,atend]{bookmark}

\usepackage[abbrev,msc-links,nobysame]{amsrefs}  
\usepackage{doi}

\renewcommand{\PrintDOI}[1]{\doi{#1}}

\usepackage{pifont}

\usepackage{amsgen,amstext,amsbsy,amsopn}
\usepackage{float}
\usepackage{enumitem}
\usepackage{xcolor}
\newcommand\red[1] {{\color{red} #1}}

\usepackage{booktabs}
\usetikzlibrary{intersections,calc,arrows.meta}

\newtheorem{thm}{Theorem}[section]
\newtheorem{obs}{Observation}[section]

\newtheorem{prop}{Proposition}[section]
\newtheorem{lem}{Lemma}[section]
\newtheorem{cor}{Corollary}
[section]

\newtheorem{prob}{Problem}[section]
\newtheorem{conj}[prob]{Conjecture}
\newtheorem{claim}{Claim}[section]

\newtheorem{remark}{Remark}[section]

\newtheorem{defi}{Definition}[section]
\usepackage{hyperref}
\hypersetup{
    colorlinks=true,
    linkcolor=blue,
    citecolor=blue,
    urlcolor=blue
}

\usetikzlibrary{backgrounds}
\usetikzlibrary{arrows}
\usetikzlibrary{shapes,shapes.geometric,shapes.misc}
\pgfdeclarelayer{edgelayer}
\pgfdeclarelayer{nodelayer}
\pgfsetlayers{background,edgelayer,nodelayer,main}
\tikzstyle{none}=[inner sep=0mm]
\tikzstyle{blacknode}=[fill=black, draw=black, shape=circle, minimum
size=0.20cm, inner sep=0pt]
\tikzstyle{whitenode}=[fill={rgb,255: red,245; green,245; blue,245},
draw=black, shape=circle, minimum size=0.20cm, inner sep=1pt]
\tikzstyle{whitenode_v1}=[fill={rgb,255: red,245; green,245; blue,245},
draw=black, shape=circle, minimum size=0.4cm, inner sep=0.8pt, scale=1]
\tikzstyle{blacknode_v1}=[fill=black, draw=black, shape=circle, minimum
size=0.1cm, inner sep=0pt]
\tikzstyle{bluenode}=[fill=blue, draw=black, shape=circle, minimum
size=0.1cm, inner sep=0pt]
\tikzstyle{black_bold}=[-, draw=black, line width=0.6mm]
\tikzstyle{blackedge}=[-, draw=black, fill=none, line width=0.3mm]
\tikzstyle{rededge}=[-, line width=0.3mm, draw=red]
\tikzstyle{blueedge}=[-, line width=0.3mm, draw=blue]
\tikzstyle{grayedge}=[-,line width=0.2mm, draw={rgb,255: red,64; green,64; blue,64}]

\tikzstyle{shadow_silver}=[-, draw=black, fill={rgb,255: red,186; green,186; blue,186}, line width=0.45mm]

\definecolor{xwhite}{RGB}{245,245,245}%
\definecolor{xback}{RGB}{0,0,0}%
\definecolor{xblue}{RGB}{0,0,139}%
\definecolor{xgreen}{RGB}{50,205,50}%
\definecolor{xcrimson}{RGB}{220,20,60}%
\definecolor{xgold}{RGB}{255,215,0}
\definecolor{xmoccasin}{RGB}{255,228,181}

\usepackage{lineno} 
\usepackage{listings}
\usepackage{xcolor}

\usepackage{multirow,multicol}

\newcommand\equ[2]
{
\begin{equation}\label{#1}
#2
\end{equation}
}

\newcommand\Eqnn[1]
{
	\begin{eqnarray*}
		#1
	\end{eqnarray*}
}

\newcommand\Eqn[2]
{
	\begin{eqnarray}\label{#1}
		#2
	\end{eqnarray}
}

\newcounter{countcase}

\def \P {{\cal P}}
\def \F {\mathcal{F}}
\def \C {\mathcal{C}}

\newcommand\maxe[2]
{
ex_{\P_1,#1}(K_{#2})
}

\newcommand\Maxe[2]
{
\text{Forb}_{\P_1,#1}(K_{#2})
}

\newcommand\Floor[1]
{
\left \lfloor  #1 \right \rfloor 
}

\def \Fvertex {V_{\mathrm{fake}}}

\begin{document}

\title{\textbf{Extremal  1-planar graphs without $\boldsymbol{k}$-cliques }}

\author{
  Licheng Zhang \thanks{E-mail: \texttt{lczhangmath@163.com}.  College of Mathematics and Statistics, Hunan  Normal  University}
\and
 Yuanqiu Huang \thanks{Corresponding author. E-mail: \texttt{hyqq@hunnu.edu.cn}. College of Mathematics and Statistics, Hunan  Normal  University}
 \and
 Fengming Dong \thanks{E-mail: \texttt{ fengming.dong@nie.edu.sg
 and  donggraph@163.com}.
 National Institute of Education, 
 Nanyang Technological University, Singapore}}
\date{}

\maketitle

\noindent
\begin{abstract}
In 2016, Dowden initiated the study of planar Tur\'an-type problems, which has since attracted considerable attention.  
Recently, Bekos et al. proved that every $K_3$-free $1$-planar graph on $n\ge 4$ vertices has at most $3n-6$ edges. 
In this paper, we strengthen this bound to $3n - 8$, which is tight for all even $n \ge 8$.
Furthermore, we show that every $K_4$-free $1$-planar graph on $n \ge 3$ vertices has at most $\bigl\lfloor \tfrac{7n}{2} \bigr\rfloor - 7$ edges, and this bound is tight for all integers $n \ge 9$. We also prove that every $K_5$-free $1$-planar graph on $n \ge 3$ vertices has at most $4n - 8$ edges, which is tight for $n = 8$ and for all integers $n \ge 10$.

\end{abstract}

\noindent
\textbf{Keywords}: 1-planar graphs, maximum size, forbidden cliques, edge-count formula


\section{Introduction}

Determining the largest possible size in a class of graphs
 is a basic problem in extremal graph theory. 
 A graph is \emph{$H$-free} if it contains no subgraph isomorphic to a graph $H$.
A well-known result is Tur\'an's Theorem \cite{MR0018405}, 
which gives an exact upper bound on the number of edges in a $K_k$-free 
$n$-vertex graph.
Tur\'an's theorem has led to  deep developments, including the Erd\H{o}s--Stone theorem~\cite{MR0018807}, generalized Tur\'an-type  problems \cite{MR5037331}, and Tur\'an-type  problems for hypergraphs \cite{MR2866732}.

Tur\'an-type extremal problems for various families of planar graphs have been widely investigated. Notable examples include $K_r$-free planar graphs for $r \in \{3,4\}$, $C_r$-free planar graphs for $r \ge 4$ (see \cite{MR3549506, MR4474377, gyori2023, MR4357025, MR4828036, MR4866462}), and planar graphs excluding theta graphs (see Lan, Shi, and Song \cite{MR3990020}).

It is natural to extend the study of Tur\'an-type extremal problems to 
$k$-planar graphs, that is, graphs that can be drawn in the plane such that each edge is crossed at most k times.
In 2025,
Bekos et al.
\cite{Bekos2025} 
initiated the study of Tur\'an-type problems on $k$-planar graphs forbidding  short cycles. 
It is known that  for $k \ge 2$, any $k$-planar graph on $n \ge 3$ vertices has at most $3.81\sqrt{k}\,n$ edges \cite{MR4010251}.  
Bekos et al. \cite{Bekos2025} showed that every $C_3$-free $k$-planar graph on $n \ge 2$ vertices
has its size $m$ at most 
$3.182\sqrt{k}\,n$ 
whenever $m \ge \frac{9}{2}n$,
and
every $C_4$-free $k$-planar graph on $n\ge 2$ vertices 
has its size $m$ at most 
$3.016\sqrt{k}\,n$
whenever $m\ge 3.483n$.

Restricted to Tur\'{a}n-type extremal problems for $1$-planar graphs, it is known that every $1$-planar graph on $n \ge 3$ vertices has at most $4n - 8$ edges \cite{MR1606052}. Based on a recently established density formula \cite{MR4821240}, Bekos et al.~\cite{Bekos2025} proved that every  
$K_3$-free $1$-planar graph on $n \ge 4$ vertices has at most $3n - 6$ edges.
Furthermore, using the discharging method, Bekos et al.~\cite{Bekos2025} showed that every $C_4$-free $1$-planar graph on $n$ vertices has at most $\frac{5}{2}n - 5$ edges,
and
 every $\{C_3, C_4\}$-free $1$-planar graph  on $n$
 vertices 
 has at most $\frac{12}{5}n$ edges.
In addition, Xu and Chang~\cite{XuChang25} established spectral Turán-type theorems for $K_k$-free $1$-planar graphs.

In this article, we 
establish tight upper bounds for the size of 
$K_k$-free $1$-planar graphs for $k=3,4,5$.


\subsection{Main results}

For any graph $G$,
let $V(G), E(G)$,
$v(G):=|V(G)|$ and $e(G):=|E(G)|$
be its vertex set,
edge set, 
order and size,
respectively.
For any positive integers $n$ 
and $k$, 
let $\Maxe{n}{k}$ be the set of 
$K_k$-free $1$-planar graphs $G$ of order $n$, and 
let 
$\maxe{n}{k}$
be the maximum value of $e(G)$ 
over all graphs $G$ in  $\Maxe{n}{k}$.
Bekos et al.~\cite{Bekos2025} showed that 
$\maxe{n}{3}\le 3n-6$.
In this paper, we will improve this bound to $\maxe{n}{3}\le 3n-8$ for $n\ge 4$.
Due to a construction by Karpov \cite{zbMATH06347737}, 
$\maxe{n}{3}\ge 3n-8$ holds for all 
even $n\ge 8$. Hence we obtain
 the following conclusion.

\begin{thm}\label{thm:K3}
$\maxe{n}{3}\le 3n-8$ for $n\ge 4$, where the equality holds for all even $n\ge 8$.
\end{thm}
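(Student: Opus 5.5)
The lower bound comes directly from Karpov's construction \cite{zbMATH06347737}. It gives, for every even $n\ge 8$, a triangle-free $1$-planar graph with exactly $3n-8$ edges. So the substance of the theorem is the upper bound $e(G)\le 3n-8$ for every $K_3$-free $1$-planar graph $G$ on $n\ge 4$ vertices.

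We argue by induction or minimal counterexample, and the key step is a charging argument on a $1$-planar drawing. Fix a $1$-planar drawing of $G$ that minimizes the number of crossings. In such a drawing, adjacent edges do not cross and no edge crosses itself. Let $c$ be the number of crossings. Each crossing involves two independent edges $ab$ and $cd$. Remove one edge from each crossing pair to obtain a plane spanning subgraph $H$ with $e(G)-c$ edges, which is triangle-free.

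Now planarize: replace each crossing by a dummy vertex. The result is a plane graph $G^\times$ with $n+c$ vertices and $e(G)+2c$ edges. Each face of $G^\times$ has length at least $3$, and a face of length $3$ must contain a dummy vertex. Triangle-freeness constrains these faces strongly. Suppose a face is bounded by $a,\times,b$, where $\times$ is the crossing of $ac$ and $bd$. Then $ab$ is an edge of $G$. In a triangle-free graph, the four endpoints $a,b,c,d$ of a crossing can span at most a $4$-cycle among the pairs $ab,bc,cd,da$, because $ac$ and $bd$ are already edges and any further edge would create a triangle. So each crossing is incident with at most four triangular faces, and each triangular face is incident with exactly one crossing.

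Euler's formula on $G^\times$ gives
\[
\sum_f (|f|-4) = 2(e(G)+2c)-4(\text{faces}) = -8 + 4(n+c) - 2(e(G)+2c),
\]
which rearranges to
\[
e(G) = 2n-4+\tfrac12\sum_f(4-|f|).
\]
A triangular face contributes $+\frac12$ and faces of length at least $5$ contribute negatively. Hence $e(G)\le 2n-4+\frac12 t$, where $t\le 4c$ is the number of triangular faces. We also need a relation between $c$ and $n$. The plane triangle-free subgraph $H$ has $e(G)-c\le 2n-4$, so $c\ge e(G)-2n+4$, which is the wrong direction for this purpose.

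The better route is to bound $e(G)$ through the "kite" structure. Each crossing whose four endpoints induce a $C_4$ yields a $4$-cycle in the plane skeleton surrounding it. Contract the picture by deleting one crossing edge per crossing to get $H$. In $H$, the faces are unions of the triangles of $G^\times$. Charge each face $F$ of $H$ of length $\ell$ with the crossings drawn inside it; a face of length $\ell$ in a triangle-free plane graph contains at most $\ell-3$ pairwise crossing-free chords, so it hosts at most $\lfloor\ell/2\rfloor-1$ crossed chords, and a $4$-face hosts at most one crossing. Then
\[
e(G)=e(H)+c\le e(H)+\sum_F(\lfloor |F|/2\rfloor -1),
\]
and combining this with Euler's formula for $H$ gives $e(G)\le 3n-6$, which recovers Bekos et al.

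\textbf{Main obstacle.} Improving $3n-6$ to $3n-8$ requires showing that equality configurations are impossible. Equality would force every face of $H$ to be a $4$-face containing one crossing, i.e. $G$ would be a quadrangulation with both diagonals of every face. Such a graph has triangles unless the two diagonals of a face are not both added. So every $4$-face containing a crossing has both diagonals $ac$ and $bd$, and these form a triangle with side $ab$, a contradiction.

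The careful part is a parity and slack count. First, $H$ must be bipartite-like with all faces even, since odd faces waste charge. Second, one must show that the total deficit $\sum_F(|F|-4)$ together with the wasted crossing capacity is always at least $4$, not just positive. I would attack this by choosing the removed edge at each crossing cleverly, so that $H$ is a maximal plane triangle-free subgraph. I would then run a case analysis on the faces of $H$ of length $4$ and $5$ and the chords they can host without creating triangles with the boundary. For example, a $4$-face cannot host the crossing of its two diagonals, since they would form triangles with the boundary. So crossings must live in faces of length at least $6$, and a $6$-face hosts at most a bounded number of crossings. I expect this local counting to close the gap of $2$. Finally I would check small $n$ ($4\le n\le 7$) directly, where $3n-8$ is compared with the planar triangle-free bound $2n-4$.
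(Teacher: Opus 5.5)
Your proposal has a genuine gap: it never proves the part of the statement that is new. Everything up to $e(G)\le 3n-6$ is the known bound of Bekos et al. The decisive step, excluding $e(G)=3n-6$ and $e(G)=3n-7$, is left at ``I expect this local counting to close the gap of 2''.

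The local contradiction you offer in the ``Main obstacle'' paragraph rests on a faulty model. You delete only one edge of each crossing pair, so every deleted edge crosses the retained edge, which belongs to $H$. A deleted edge is therefore never a chord of a single face of $H$, and there is no ``$4$-face of $H$ containing a crossing''. The per-face capacity bound $\lfloor \ell/2\rfloor-1$ is unjustified; summed over faces it merely restates the Czap--Hud\'ak bound $c\le n-2$. (Also, a quadrangulation with both diagonals in every face has $4n-8$ edges, not $3n-6$.)

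In fact no per-face or per-crossing count can do the job. When $e(G)=3n-6$, every face of $G^\times$ is either a triangle through one crossing or a $4$-face alternating between true vertices and crossings, and each crossing sees exactly two opposite triangles. All of this is consistent with triangle-freeness at every face and every crossing, so there is no local slack to find.

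The missing idea is global. The paper passes to the crossing skeleton $\mathcal{C}(G)$, obtained by deleting all uncrossed edges. The planarization of the skeleton then has only alternating $4$-faces. Euler's formula gives $\sum_v(4-d(v))=8$, and since crossings have degree $4$, some true vertex is an alternating vertex of degree $2$ or $3$ in the skeleton. Such a vertex forces a parallel edge or a triangle, respectively.

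Excluding $3n-7$ needs still more. The identity, together with the same skeleton argument, forces $A=C=0$ and $B=2$, i.e.\ exactly two single-fake $4$-faces. A minimal counterexample has $\delta\ge 4$, and the two special faces are shown to be edge-disjoint. They become two alternating $6$-faces of the skeleton, and Euler's formula then yields an alternating true vertex of degree at most $3$ off those faces, giving the same contradiction.

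There are also errors in the first half. For a crossing of $ac$ and $bd$, two consecutive pairs such as $ab$ and $bc$ already form the triangle $abc$ together with $ac$. So at most two of $ab,bc,cd,da$ are edges, and they must be opposite; hence $t\le 2c$, not $t\le 4c$. This is the lemma of Bekos et al.\ behind $A\ge 0$. With this correction, your identity $e(G)=2n-4+\frac12\sum_f(4-|f|)$ and $c\le n-2$ do give $e(G)\le 2n-4+c\le 3n-6$. That is a valid alternative route to the old bound; the paper's identity reaches it using $B\ge 0$ instead of $c\le n-2$.

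Two smaller points. The ``kite'' idea is unavailable, since a triangle-free graph contains no $K_4$. For $4\le n\le 7$ the right comparison is Mantel's bound $\lfloor n^2/4\rfloor\le 3n-8$, not the planar bound $2n-4$, because $G$ need not be planar (e.g.\ $K_{3,3}$).
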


%
%

We then study $\maxe{n}{k}$
for $k\ge 4$. 

\begin{thm}\label{thm:K4}

$\maxe{n}{4}= \Bigl\lfloor \frac{7n}{2}\Bigr\rfloor-7$
for all $n\ge 9$,
and 
$\maxe{n}{4}=\bigl\lfloor \frac{n^2}3\bigr\rfloor
-\bigl\lfloor \frac{n}8\bigr\rfloor
$ for $1\le n\le 8$.

\end{thm}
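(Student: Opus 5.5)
We split the proof into an upper bound and a matching construction. The range $n\le 8$ and the range $n\ge 9$ are handled separately.

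\textbf{Small orders $1\le n\le 8$.} Here every $1$-planar graph on at most $8$ vertices satisfies $e\le 4n-8$, and K$_4$-freeness caps $e$ by Tur\'an's bound $\lfloor n^2/3\rfloor$. For $n\le 7$ we have $\lfloor n/8\rfloor=0$, and we show the Tur\'an graph $T_3(n)$ is $1$-planar. For $n\le 6$ it is a subgraph of $K_{2,2,2}$, which is planar. For $n=7$ it is $K_{2,2,3}$ with $16$ edges, and we exhibit an explicit $1$-planar drawing. For $n=8$ the claimed value is $21-1=20$. The lower bound comes from $T_3(8)=K_{2,3,3}$ minus one edge, drawn $1$-planarly. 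The upper bound requires showing that $K_{2,3,3}$ itself is not $1$-planar. Note that $4n-8=24\ge 21$, so counting alone does not suffice. We expect to cite the known classification of $1$-planar complete multipartite graphs (Czap--Hud\'ak), or else run a short crossing-count argument. Any K$_4$-free graph on $8$ vertices with $21$ edges is $T_3(8)$ by the stability form of Tur\'an's theorem, so excluding $K_{2,3,3}$ finishes the case $n=8$.

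\textbf{Upper bound for $n\ge 9$.} We aim to prove $e(G)\le \frac{7n}{2}-7$ by the same density-formula technique used for Theorem~\ref{thm:K3}. Take a $1$-planar drawing of $G$ with the minimum number of crossings, and let $c$ be the number of crossings. Remove one edge from each crossing pair to obtain a plane graph $G_p$ with $e(G_p)=e(G)-c$. Each crossing pair $ab, cd$ spans $K_4$ on $\{a,b,c,d\}$ whenever the four "kite" edges $ac, cb, bd, da$ are present. Since $G$ is K$_4$-free, every crossing misses at least one kite edge, so near every crossing the planarization has a face of size at least $4$ (or an equivalent deficit). We then write the edge-count (density) formula
\[
e(G)=4n-8-\sum_{f}\bigl(\text{excess of face } f\bigr),
\]
where the excess is measured in the planarization, in which crossings become degree-$4$ dummy vertices. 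We charge each crossing's missing kite edge against this excess. The key step is a discharging argument showing that the total excess is at least $\frac{n}{2}+1$. Each vertex must "see" a deficient face: roughly, every two real vertices force one unit of loss, which yields $4n-8-(n/2+1)=\frac{7n}{2}-9$. The precise target is $\frac{7n}{2}-7$, which means showing a total deficit of at least $\frac n2-1$. We expect to assign charge so that each crossing-free kite deficit of $1$ is shared by at most two vertices. We also need to show that vertices not incident to any crossing lie on planar triangular regions, where K$_4$-freeness still forces a loss: a plane triangulation on $n\ge 5$ vertices contains a separating structure, or else $K_4$ appears through an added crossing edge. This discharging step is the main obstacle. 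We will first try the rule "each vertex receives $\frac12$ from incident deficient faces" and verify it case by case on face sizes $3$, $4$ and larger.

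\textbf{Construction for $n\ge 9$.} We build K$_4$-free $1$-planar graphs with exactly $\lfloor 7n/2\rfloor-7$ edges. Start from a plane quadrangulation-like $4$-regular structure, for instance an antiprism or a doubled cycle with two poles. Insert crossing diagonals into most quadrangular faces while leaving one kite edge out of each, so that no $K_4$ forms. For even $n$ we use a $(n/2)$-prism-based pattern and count edges as $\frac{7n}{2}-7$. For odd $n$ we add one vertex of degree $3$ (or $4$) placed in a face so that the count increases by $3$. We will then check $1$-planarity and K$_4$-freeness directly from the drawing.
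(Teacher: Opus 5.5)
Your treatment of $1\le n\le 8$ follows the paper: Tur\'an's theorem with its equality case, $K_{2,2,3}$ is $1$-planar, and $K_{2,3,3}$ is not $1$-planar by Czap--Hud\'ak. You do still owe a drawing of your $20$-edge graph $K_{2,3,3}-e$, which you have not checked is $1$-planar; the paper's $20$-edge example is $K_{2,2,4}$.

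The genuine gap is the upper bound for $n\ge 9$. This is the heart of the theorem, and you leave it as an unresolved discharging step. You do isolate the right local fact: every crossing misses a kite edge, so at most three of the four faces of $G^\times$ at a crossing are triangles. But nothing you state turns this into the global deficit $\frac n2-1$ below $4n-8$. The paper supplies two ingredients.

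First, in the exact formula $m=3n-6-\frac{A+B}{2}-\frac34C$ one always has $B,C\ge 0$. Your local fact is precisely $a(z)\ge 1$ at every crossing $z$, hence $A\ge -c$ and $m\le 3n-6+\frac c2$.

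Second, a global input: every $1$-planar drawing on $n$ vertices has at most $n-2$ crossings (Czap--Hud\'ak). Therefore $m\le 3n-6+\frac{n-2}{2}=\frac72n-7$.

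A self-contained discharging proof also works if you normalize the density formula at $t=\frac72$ instead of $t=4$, i.e.\ use $\frac72(n-2)-m=\sum_\sigma\bigl(\frac58\|\sigma\|-\frac72\bigr)+|X|$. The charges are then as follows. A triangular cell at a crossing has charge $-\frac38$, and a true triangle has $+\frac14$. A cell of degree $d\ge 4$ carrying $k\le d/2$ crossings has charge $\frac{5d}{4}-\frac{5k}{8}-\frac72\ge \frac k8$. Each crossing's $+1$ covers its at most three triangular cells once it receives $\frac18$ from a non-triangular one. So every final charge is nonnegative.

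Your construction for $n\ge 9$ is also only a sketch, and its guiding principle is insufficient. Omitting one kite edge at each crossing prevents a $K_4$ on the four ends of that crossing, but a $K_4$ can use edges from several kites. Applied at $n=8$, your $(n/2)$-prism pattern is the cube:

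1. Putting both diagonals in every face gives $K_{2,2,2,2}$.
2. Deleting three cube edges that form a perfect matching of the dual octahedron removes a kite edge from all six crossings and leaves $21=\frac72\cdot 8-7$ edges.
3. Yet a $K_4$ survives, since each edge lies in only $4$ of the $16$ copies of $K_4$. It must survive, because $\maxe{8}{4}=20$.

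So you need explicit graphs whose global $K_4$-freeness is actually checked. The paper starts from explicit graphs $G_9$ and $G_{10}$ and deletes four edges to open a true $6$-face $ua_1b_1vb_2a_2$. Into that face it glues the ladder-based graph $H_{\ell-2}=L_{\ell-2}\vee N_2-\{ub_1,va_{\ell-2}\}$, stripped of its six boundary edges. Any $K_4$ would then lie entirely in one of two pieces, each of which is $K_4$-free. This yields $7\ell-4$ edges for $n=2\ell+1$ and $7\ell$ edges for $n=2\ell+2$.
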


Since bipartite graphs are $K_3$-free and tripartite graphs are $K_4$-free, from Theorems~\ref{thm:K3} and~\ref{thm:K4} we obtain the following corollaries, respectively.

\begin{cor}[Karpov \cite{zbMATH06347737}]\label{cor:bipartite}
Let $G$ be a bipartite $1$-planar graph of order $n\ge 4$. Then
$
e(G)\le 3n-8.
$
\end{cor}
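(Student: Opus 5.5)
This corollary follows directly from Theorem~\ref{thm:K3}. Let $G$ be a bipartite $1$-planar graph of order $n\ge 4$, with bipartition $(A,B)$. The plan has two short steps.

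\textbf{Step 1: $G$ is $K_3$-free.} Every cycle of a bipartite graph alternates between $A$ and $B$, so it has even length. A triangle is a cycle of length $3$, so $G$ contains no subgraph isomorphic to $K_3$.

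\textbf{Step 2: apply Theorem~\ref{thm:K3}.} By Step 1, $G\in\Maxe{n}{3}$. Theorem~\ref{thm:K3} then gives
\[
e(G)\le \maxe{n}{3}\le 3n-8 \qquad (n\ge 4).
\]

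No step here is a real obstacle: all the difficulty lies in the upper bound of Theorem~\ref{thm:K3}, which we are allowed to assume. The only points to check are that the hypothesis $n\ge 4$ matches that of Theorem~\ref{thm:K3}, and that ``$1$-planar'' means the same in both statements. Both hold.

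This route differs from Karpov's original argument for bipartite $1$-planar graphs. Karpov's construction also shows the bound is attained for even $n\ge 8$, which is where the equality part of Theorem~\ref{thm:K3} comes from. The corollary itself only asserts the inequality, so no construction is needed.
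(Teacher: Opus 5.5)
Your proof is correct and is the paper's own argument: the paper obtains this corollary directly from Theorem~\ref{thm:K3}, using only the fact that bipartite graphs are $K_3$-free.
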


\begin{cor}[Suzuki \cite{MR4305146}]\label{cor:tripartite}
Let $G$ be a tripartite $1$-planar graph of order $n\ge 3$. Then
$
e(G)\le \frac{7}{2}n-7.
$
\end{cor}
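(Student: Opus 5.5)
Corollary~\ref{cor:tripartite} follows directly from Theorem~\ref{thm:K4}, as the paper indicates; the plan below only checks the case split. Let $G$ be a tripartite $1$-planar graph of order $n\ge 3$, with vertex partition $V(G)=V_1\cup V_2\cup V_3$ into three independent sets.

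\emph{Step 1: $G$ is $K_4$-free.} A copy of $K_4$ would have four pairwise adjacent vertices. By pigeonhole, two of them lie in the same class $V_i$, which is impossible since $V_i$ is independent. Hence $G\in\Maxe{n}{4}$, and so $e(G)\le \maxe{n}{4}$.

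\emph{Step 2: compare the bound with $\tfrac72 n-7$.} We split on $n$.

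\begin{itemize}
\item For $n\ge 9$, Theorem~\ref{thm:K4} gives $e(G)\le \Floor{7n/2}-7\le \tfrac72 n-7$.
\item For $3\le n\le 8$, Theorem~\ref{thm:K4} gives $e(G)\le \Floor{n^2/3}-\Floor{n/8}$. We compare this with $\tfrac72 n-7$ value by value:
\begin{itemize}
\item $n=3$: $3$ versus $3.5$.
\item $n=4$: $5$ versus $7$.
\item $n=5$: $8$ versus $10.5$.
\item $n=6$: $12$ versus $14$.
\item $n=7$: $16$ versus $17.5$.
\item $n=8$: $21-1=20$ versus $21$.
\end{itemize}
\end{itemize}

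In every case the bound holds, which proves the corollary.

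No step is a real obstacle: all the difficulty is inside Theorem~\ref{thm:K4}. The only point needing care is the small-order range, where the bound is not of the form $\tfrac72 n-7$. Since there are only six values of $n$, direct enumeration is enough.
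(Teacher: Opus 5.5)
Your proof is correct and follows the paper's own route: tripartite graphs are $K_4$-free, so the bound comes from Theorem~\ref{thm:K4}, and your small-order values for $3\le n\le 8$ are right. As a shortcut, Proposition~\ref{k4free-2} already gives $e(G)\le \frac72 n-7$ for all $n\ge 4$, so the only case you actually need to check by hand is $n=3$.
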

As a side remark, we also obtain the following easy consequences. 
These results are known and were previously proved by discharging arguments, here they follow immediately from the edge bounds.

\begin{cor}[Fabrici and Madaras \cite{MR2297168}]\label{cor:delta6-triangle}
Every $1$-planar graph $G$ with minimum degree 6 contains a $K_3$.
\end{cor}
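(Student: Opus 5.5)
We derive Corollary~\ref{cor:delta6-triangle} directly from the edge bound of Theorem~\ref{thm:K3}, by contradiction.

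Suppose $G$ is a $1$-planar graph with minimum degree $\delta(G)\ge 6$ that contains no $K_3$. Let $n=v(G)$. Since every vertex has at least $6$ neighbours, $n\ge 7\ge 4$, so $G$ belongs to $\Maxe{n}{3}$ with $n\ge 4$. Theorem~\ref{thm:K3} then gives
\[
e(G)\le \maxe{n}{3}\le 3n-8 .
\]
On the other hand, the handshake lemma together with $\delta(G)\ge 6$ gives
\[
2e(G)=\sum_{v\in V(G)} d(v)\ge 6n, \qquad\text{so}\qquad e(G)\ge 3n .
\]
Combining the two inequalities yields $3n\le 3n-8$, which is impossible. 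Hence $G$ must contain a $K_3$.

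There is no real obstacle; the only point to check is the order hypothesis $n\ge 4$ in Theorem~\ref{thm:K3}, which follows from $\delta(G)\ge 6$. Note that Bekos et al.'s weaker bound $3n-6$ would already give the contradiction $3n\le 3n-6$. What is needed is any bound of the form $3n-c$ with $c>0$, and Theorem~\ref{thm:K3} supplies this with room to spare.
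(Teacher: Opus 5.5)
Your proposal is correct and is the paper's argument: the handshake lemma gives $e(G)\ge 3n$, which contradicts the bound $e(G)\le 3n-8$ from Theorem~\ref{thm:K3}. Your check that $n\ge 7\ge 4$, and your remark that the weaker $3n-6$ bound of Bekos et al.\ already suffices, are both accurate.
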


\begin{cor}[Hud\'ak and Madaras \cite{MR2802062}]\label{cor:delta7-K4}
Every $1$-planar graph $G$ with minimum degree 7 contains a $K_4$.
\end{cor}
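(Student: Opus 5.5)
The statement to prove is Corollary~\ref{cor:delta7-K4}, and I plan to derive it directly from the edge bound in Theorem~\ref{thm:K4}. The argument goes by contradiction. Suppose $G$ is a $1$-planar graph with $\delta(G)\ge 7$ that contains no $K_4$. Then $G\in\Maxe{n}{4}$, where $n=v(G)$. The degree condition gives $n\ge 8$, and the handshake lemma gives
\[
e(G)\ge \tfrac{7n}{2}.
\]

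I then compare this with the upper bound from Theorem~\ref{thm:K4}, splitting on $n$.

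\textbf{Case $n\ge 9$.} Theorem~\ref{thm:K4} gives
\[
e(G)\le \Bigl\lfloor \tfrac{7n}{2}\Bigr\rfloor-7<\tfrac{7n}{2},
\]
which contradicts the lower bound.

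\textbf{Case $n=8$.} Minimum degree $7$ forces $G=K_8$, which contains $K_4$. Alternatively, the small-order formula in Theorem~\ref{thm:K4} gives
\[
e(G)\le \Bigl\lfloor \tfrac{64}{3}\Bigr\rfloor-1=20<28 .
\]
Either way we get a contradiction.

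Since both cases are impossible, $G$ must contain a $K_4$. The corollary adds essentially nothing beyond Theorem~\ref{thm:K4}; the only care needed is to treat the order $n=8$, where the general formula does not apply, separately.
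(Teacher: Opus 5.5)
Your proof is correct and follows essentially the same route as the paper, which also notes that minimum degree $7$ gives $e(G)\ge \frac{7}{2}n$ and then invokes Theorem~\ref{thm:K4}. Your separate treatment of $n=8$ is harmless extra care; in fact that case cannot arise, since $K_8\supseteq K_7$ is not $1$-planar.
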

Indeed, if $\delta(G)=6$, then 
$
e(G)\ge 3n.
$
Therefore, by Theorem~\ref{thm:K3}, $G$ contains a $K_3$.
Similarly, if $\delta(G)= 7$, then
$
e(G)\ge \frac{7}{2}n.
$
Therefore, by Theorem~\ref{thm:K4},  $G$ contains a $K_4$.

Note that  Nakamoto, Noguchi,  and Ozeki \cite{MR3502764} showed that there exist infinitely many 4-colorable optimal 1-planar graphs. It is obvious that these graphs are also $K_5$-free, but we do not see the explicit constructions in \cite{MR3502764}. In this paper, we explicitly construct $K_5$-free graphs for orders $n=8$ and $n\ge 10$ (see Proposition \ref{prop:opc} for details). We therefore have the following theorem.

\begin{thm}\label{thm:K5}
$\maxe{n}{5}= 4n-8$
	for $n=8$ or $n\ge 10$,
	and
	$\maxe{n}{5}
	=\left \lfloor \frac{3n^2}{8}\right \rfloor 
	-3\left \lfloor 
	\frac{n}{9}\right \rfloor
	$ for $1\le n\le 7$ or $n=9$.

\end{thm}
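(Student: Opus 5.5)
The proof splits into the range $n=8$ or $n\ge 10$, handled by the general $1$-planar bound plus a construction, and the small cases $1\le n\le 7$ and $n=9$.

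\textbf{Large $n$.} The upper bound $\maxe{n}{5}\le 4n-8$ is the classical bound for $1$-planar graphs \cite{MR1606052}. For the lower bound I will exhibit, for $n=8$ and every $n\ge 10$, an optimal $1$-planar graph (one with $4n-8$ edges) that is $K_5$-free; this is Proposition~\ref{prop:opc}. An optimal $1$-planar graph is a $3$-connected quadrangulation $Q$ with both diagonals added in every face. My plan is to choose $Q$ and a proper $4$-colouring of $Q+$diagonals, since a $4$-colourable graph has no $K_5$.

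For $n=8$, take $Q$ to be the cube: adding both diagonals in each face gives $K_8$ minus a perfect matching, which is $4$-partite and has $24=4\cdot 8-8$ edges. For $n\ge 10$, I will use the pseudo-double-wheel family (a $2k$-cycle plus two poles, giving $n=2k+2$) together with its variants obtained by local face expansions to reach the odd orders. I will colour the cycle alternately and give the poles the two remaining colours, then check that the colouring extends through the expansions.

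\textbf{Small $n$.} The claimed value is $\lfloor 3n^2/8\rfloor$, the Tur\'an number $t_4(n)$, minus $3$ when $n=9$. For $n\le 7$, I will show that the Tur\'an graph $T_4(n)$ is $1$-planar, and Tur\'an's theorem gives the matching upper bound. The largest case to check is $T_4(7)=K_{2,2,2,1}$, with $18$ edges. This does not exceed $4n-8=20$, and I will give an explicit $1$-planar drawing.

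For $n=9$ we have $t_4(9)=30$ and $4\cdot 9-8=28$, so the $1$-planar bound alone already gives at most $28$. The claim is $27$, so I must show that no optimal $1$-planar graph on $9$ vertices is $K_5$-free. I expect this to be the main obstacle. Optimal $1$-planar graphs on $9$ vertices correspond to $3$-connected quadrangulations of the sphere on $9$ vertices, and there are only a few of these. My approach is to enumerate them, for instance via the known fact that optimal $1$-planar graphs are generated from pseudo-double wheels by the standard face-expansion operations. For each one I will locate a $K_5$ directly, for example a quadrangular face together with a vertex adjacent to all four of its corners through diagonals and edges. A useful observation is that for $n$ odd the pseudo-double wheel does not exist, so every $9$-vertex quadrangulation contains a structure that forces such a vertex.

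For the lower bound at $n=9$ I will give an explicit $K_5$-free $1$-planar graph with $27$ edges. My first candidate is $T_4(9)$ with three suitable edges removed, drawn $1$-planarly. Alternatively, I can start from the $n=8$ example, add one vertex of degree $3$ placed inside a face, and keep the graph $4$-colourable.
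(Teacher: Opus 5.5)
Your lower-bound constructions depend on exhibiting a proper $4$-colouring, and that approach cannot work.

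First, the colouring you propose for the pseudo double wheel is not proper. Adding both diagonals to a face $x\,v_{2i-1}v_{2i}v_{2i+1}$ at a pole $x$ creates the edge $v_{2i-1}v_{2i+1}$, which joins two cycle vertices of the same colour. The resulting graph is $\overline{K_2}\vee C_{2k}^2$, and it is $4$-colourable only when $3\mid k$.

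More decisively, no optimal $1$-planar graph on $10$ vertices is $4$-colourable at all. Let $Q$ be the quadrangulation formed by the uncrossed edges. Every face of $Q$ spans a $K_4$, so each colour class $C_i$ meets each of the $n-2$ faces exactly once, giving $\sum_{v\in C_i} d_Q(v)=n-2=8$. Since $\delta(Q)\ge 3$ (a vertex of $Q$-degree $2$ would produce a doubled diagonal), each class has at most $2$ vertices. Four classes then cover only $8<10$ vertices.

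So $K_5$-freeness has to be checked through cliques, not colours. For even $n$ this is easy once done, since $\omega(\overline{K_2}\vee C_{2k}^2)=4$ for $k\ge 3$. For odd $n\ge 11$, however, your ``local face expansions'' are unspecified and come with no argument. The paper fills exactly this gap with the explicit graphs $G_8,G_{10},G_{11},G_{13}$ and the $Q_4$-addition of Proposition~\ref{prop:opc}. That operation preserves $K_5$-freeness because the four new vertices have neighbourhood $\{u_1,\dots,u_4\}$.

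The same obstruction kills both of your $n=9$ candidates; the lower bound, not the upper bound, is the real issue at $n=9$.

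\begin{itemize}
\item \emph{No $4$-colourable $27$-edge example exists.} A $1$-planar graph with $9$ vertices and $27$ edges has $6$ or $7$ crossings. With $6$ crossings, the planarization is a triangulation. The uncrossed edges then form a plane graph with six quadrilateral faces (each a $K_4$ in $G$) and two triangles, again of minimum degree $3$. Each colour class therefore has degree sum at most $6+2=8$, hence at most $2$ vertices, too few to cover $9$. With $7$ crossings (five quadrilaterals plus one hexagon carrying two crossings), a short case check also gives a contradiction. Hence no $27$-edge subgraph of $T_4(9)$ is $1$-planar.
\item \emph{The second candidate is not $1$-planar.} Every $1$-planar drawing of $K_{2,2,2,2}$ has exactly $6$ crossings, and every face of its planarization is a triangle $uvz$ with $z$ a crossing. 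A new vertex placed in such a face can reach only $u$ and $v$, so $K_{2,2,2,2}$ plus a vertex of degree $3$ is not $1$-planar.
\end{itemize}

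The extremal graph for $n=9$ must therefore be $K_5$-free with chromatic number $5$. The one in Figure~\ref{fig:two-ex}(b) is such a graph; it is isomorphic to $\overline{K_2}\vee(C_7^2-v_1v_3)$.

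Conversely, the step you flag as the main obstacle is empty. Lemma~\ref{lem:edges_1planar} already gives $e(G)\le 4n-9=27$ for $n=9$, because there is no optimal $1$-planar graph on $9$ vertices. The upper bound is immediate, and your planned enumeration would find no candidates.
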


%
%
%
%
%

\begin{remark}
Since $K_5$ is a subgraph of $K_6$, every $K_5$-free graph is also $K_6$-free. Hence, the upper bounds in Theorem~\ref{thm:K5} also apply to $K_6$-free $1$-planar graphs, except possibly for a few small values of $n$. Furthermore, since $K_7$ is not $1$-planar~\cite{MR2876333}, every $1$-planar graph is automatically $K_7$-free. Thus, for every $k\ge 7$, the condition of being $K_k$-free imposes no additional restriction on $1$-planar graphs, and the best possible upper bound is simply the general bound $4n-8$, which is attained for $n=8$ and for all $n\ge 10$.
\end{remark}

\subsection{Proof  overview and new tools}

Kaufmann et al. \cite{MR4821240} introduced the following density formula for a connected drawing $G$:
\[
e(G)= t(|V(G)|-2)
-\sum_{c\in \C}
\left(\frac{t-1}{4}\|c\|-t\right)
-|X|,
\]
where $t$ is a real number, $X$ is the set of crossings, $\C$ is the set of cells, and $\|c\|$ denotes the size of a cell $c$, that is, the number of vertices and edge-segments on its boundary, counted with multiplicity. This formula provides crucial support for the derivation and improvement of the edge number upper bounds of various beyond-planar graph classes.

Motivated by this viewpoint, 
we establish a new formula 
of $e(G)$
for a connected  $1$-plane graph $G$
(see Theorem~\ref{lem:maintool})
in terms of 
three planarization-based parameters $A, B$ and $C$
defined in Definition~\ref{def:pAB},
where $B$ and $C$ are 
always non-negative, 
and $A\ge 0$ as long as 
$G$ is $K_3$-free. 
This new formula therefore explicitly characterizes the correction terms relative to the planar bound $3n-6$, and serves as a crucial 
tool in the proofs of Theorems~\ref{thm:K3} and~\ref{thm:K4}.

In the $K_3$-free case, 
the new formula of $e(G)$
yields the upper 
bound $3n-6$ directly
(just as in the work of Bekos et al. using the  density formula). 
To show that this bound 
can be further improved to $3n-8$, 
we conduct a more refined structural analysis of \(K_3\)-free 1-plane graphs by introducing two core concepts: crossing skeleton and alternating $k$-vertices,
to disprove the existence 
of $K_3$-free $1$-planar 
graphs of order $n$ and 
size $3n-6$ or $3n-7$.

For the $K_4$-free case, we use the new formula of $e(G)$
to give a lower bound for the  number of crossings in a 
$1$-plane graph $G$, 
and then apply the fact that every $1$-planar drawing of an $n$-vertex graph has at most $n-2$ crossings
to deduce an upper bound of 
$e(G)$.
For the $K_5$-free case, we obtain the desired bound by starting from four suitable initial optimal $1$-plane graphs and
apply the $Q_4$-addition 
to define other desired 
optimal 
$K_5$-free graphs recursively.




\section{Preliminaries}\label{sec:notation}
\subsection{Terminology and notations}

Throughout this paper, all graphs are simple unless stated otherwise.
We assume that the reader is familiar with standard graph-theoretic terminology and notation; see \cite{MR4874150} for example.

A \emph{drawing} of a graph $G$ in the plane $\mathbb{R}^2$ represents each vertex by a distinct point and each edge by a simple curve joining its ends, with the interior of every edge disjoint from all vertices; an intersection point of the interiors of two edges is a \emph{crossing}. 
A drawing is \emph{good} if no edge crosses itself, no two adjacent edges cross each other, any two edges cross at most once, and no three edges cross at the same point.
A \emph{$1$-planar drawing} is a good drawing in which every edge is crossed at most once. 
A graph together with a fixed $1$-planar drawing is called a \emph{$1$-plane graph}, and a graph is \emph{$1$-planar} if it admits a $1$-planar drawing; throughout the paper,  all $1$-planar drawings are assumed to be good.

Let $G$ be a plane graph. The \emph{faces} of $G$ are the connected components of $\mathbb{R}^2\setminus G$.
For a face $F$ of $G$, the \emph{facial walk $\partial(F)$} of $F$ consists of one or more disjoint  closed walks obtained
by traversing the boundary of $F$; this facial walk may repeat vertices or edges.
We define $\deg(F)$ to be the length of this facial walk, i.e., the number of edge-occurrences on the boundary of $F$. 
  Let $\mathcal{F}(G)$ denote the set of faces of the plane graph $G$.
A face $F$ of $G$ is called a \emph{$k$-face} if $\deg(F)=k$ and a \emph{$k^+$-face} if $\deg(F)\ge k$.
Let $\mathcal{F}_k(G)$ and $\mathcal{F}_{\ge k}(G)$ denote the sets of all $k$-faces and $k^+$-faces of $G$, respectively.

Let $G$ be a $1$-plane graph. The \emph{planarization} of $G$, denoted by $G^\times$, is the plane graph obtained by replacing each crossing of two edges $uv$ and $xy$ with a new vertex $z$ of degree 4 and subdividing these edges at $z$, that is, replacing $uv$ and $xy$ by the four edges $uz$, $zv$, $xz$, and $zy$.
Clearly, $G^\times=G$ if $G$ is a plane graph.
Vertices of $G^\times$ that correspond to crossings in the fixed drawing of $G$ are called \emph{fake vertices}.
All other vertices of $G^\times$ (that is, the original vertices of $G$) are called \emph{true vertices}.
Let $V_{\mathrm{fake}}(G^\times)$ denote the set of fake vertices of $G^\times$, and put
$x:=|V_{\mathrm{fake}}(G^\times)|$ (i.e. the number of fake vertices). An edge of $G^\times$ whose endpoints are both true vertices is called a \emph{true edge}.
An edge of $G^\times$ that is incident with at least one fake vertex is called a \emph{fake edge}.
 Two faces incident with a fake vertex $z$ are said to be \emph{opposite around $z$ } if they share only the vertex $z$.
 A face $F$ of $G^\times$ is called a \emph{true face} if $\partial(F)$ contains only true vertices,
and a \emph{fake face} if $\partial(F)$ contains at least one fake vertex.


For a graph $G$ and a vertex $u\in V(G)$, let $N_G(u)$ denote the set of neighbors of $u$ in $G$, and let $d_G(u)=|N_G(u)|$ denote the degree of $u$ in $G$.
The \emph{minimum degree} of $G$ is
$
\delta(G):=\min\{d_G(v): v\in V(G)\}.
$
We write $n_i(G)$ for the number of vertices of degree $i$ in a graph $G$. 
Let \(S\) be a set of edges with both end-vertices in \(V(G)\). Let \(G+S\) denote the graph obtained from \(G\) by adding all edges in \(S\).


We use $K_k$, $P_k$ and $C_k$ to denote the complete graph, the path graph and the cycle graph on $k$ vertices, respectively.

\subsection{Some useful lemmas}

We write $T_{k-1}(n)$ for the Tur\'an graph: the complete $(k-1)$-partite graph on $n$ vertices
whose part sizes differ by at most one.
Equivalently, letting $q=\lfloor n/(k-1)\rfloor$ and $r:=n\bmod (k-1)$, $T_{k-1}(n)$ has
$r$ parts of size $q+1$ and $k-1-r$ parts of size $q$.
Below is the well-known Tur\'an’s Theorem, which will be applied to handle the cases of smaller order.
\begin{lem}[Tur\'an \cite{MR0018405}]

Let $k\ge 2$ and let $G$ be a $K_k$-free graph of order $n$, with $r:=n\bmod (k-1)$.
Then
$
e(G)\le \frac12\cdot \frac{(k-2)(n^2-r^2)}{k-1}+\binom{r}{2}$.
Moreover, equality holds if and only if $G$ is isomorphic to $T_{k-1}(n)$.
\end{lem}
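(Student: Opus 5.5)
The statement immediately above is Turán's theorem, cited from \cite{MR0018405}. I will prove both the bound and the uniqueness of the extremal graph by induction on $n$, using the standard ``remove a clique'' argument.

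\textbf{Base case.} For $n\le k-1$ we have $r=n$ and $q=0$. The claimed bound becomes $\binom{n}{2}$, which is attained only by $K_n=T_{k-1}(n)$. Indeed, with $n\le k-1$ every part of $T_{k-1}(n)$ is a singleton or empty.

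\textbf{Inductive step, bound.} Let $n\ge k$ and let $G$ be a $K_k$-free graph of order $n$ with the maximum number of edges. Maximality forces $G$ to contain a copy $K$ of $K_{k-1}$; otherwise we could add an edge without creating a $K_k$, unless $G$ is complete, which is impossible since $n\ge k$. Now split the edges into three groups:
\begin{itemize}
\item The edges inside $K$ number $\binom{k-1}{2}$.
\item Every vertex outside $K$ has at most $k-2$ neighbours in $K$, since otherwise it would complete a $K_k$. This gives at most $(n-k+1)(k-2)$ edges between $K$ and $G-K$.
\item The graph $G-K$ is $K_k$-free on $n-k+1$ vertices, so by induction it has at most $t_{k-1}(n-k+1)$ edges.
\end{itemize}
Note that $n-k+1\equiv n \pmod{k-1}$, so $r$ is unchanged. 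The only calculation is to check the identity
\[
t_{k-1}(n)=t_{k-1}(n-k+1)+\binom{k-1}{2}+(n-k+1)(k-2).
\]
It holds because $T_{k-1}(n)$ is obtained from $T_{k-1}(n-k+1)$ by adding one vertex to each part. This is a routine algebraic check against the closed form $\tfrac12\cdot\tfrac{(k-2)(n^2-r^2)}{k-1}+\binom r2$.

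\textbf{Inductive step, equality.} Suppose equality holds. Then all three estimates above are tight. First, $G-K\cong T_{k-1}(n-k+1)$ by induction. Second, each outside vertex $w$ is adjacent to exactly $k-2$ vertices of $K$, so it misses exactly one vertex $v_i\in K$. Assign $w$ to the part associated with $v_i$.

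\textbf{Main obstacle.} The step I expect to be delicate is showing that this assignment agrees with the partition of $G-K$, which then gives $G\cong T_{k-1}(n)$. Suppose two vertices $w,w'$ are adjacent in $G-K$ but miss the same $v_i$. Then $w,w'$ together with $K-v_i$ form a $K_k$, a contradiction. Hence each part of $G-K$, viewed as a maximal independent class, must be sent to a single $v_i$. Distinct parts must be sent to distinct vertices of $K$, by the same clique argument applied to an edge joining two parts together with a counting of parts. Therefore $G$ is complete $(k-1)$-partite with balanced parts, that is, $G\cong T_{k-1}(n)$. Conversely, $T_{k-1}(n)$ is clearly $K_k$-free and attains the bound.
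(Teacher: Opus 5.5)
The paper gives no proof of this lemma; it is quoted from Tur\'an and used only for small orders. Your argument is the standard clique-deletion induction, as in Diestel's book, which the paper cites. The bound half is correct: the maximality argument that produces a $K_{k-1}$, the three-way split of the edges, and the identity $t_{k-1}(n)=t_{k-1}(n-k+1)+\binom{k-1}{2}+(n-k+1)(k-2)$ all hold. One small slip in the base case: for $n=k-1$ you have $r=0$ and $q=1$, not $r=n$, $q=0$. The bound still evaluates to $\binom{k-1}{2}=\binom{n}{2}$, so nothing breaks.

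The uniqueness step, however, does not go through as written, because the implications are reversed. Write $W_i$ for the set of vertices of $G-K$ that miss $v_i$. Your $K_k$ argument shows that each $W_i$ is independent. Since $G-K\cong T_{k-1}(n-k+1)$ is complete multipartite, this means each $W_i$ lies inside a single part, i.e.\ vertices in distinct parts miss distinct vertices of $K$. That is exactly your second claim, and it needs no counting. It does \emph{not} give what you conclude with ``Hence'', namely that all vertices of one part miss the same $v_i$: a priori a part could be split among several $W_i$'s.

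That direction needs a genuine counting step. Each nonempty part is a disjoint union of nonempty $W_i$'s, so there are at least as many nonempty $W_i$'s as nonempty parts. If $n-k+1\ge k-1$, all $k-1$ parts are nonempty, which forces each part to equal one $W_i$. If $n-k+1<k-1$, the parts are singletons, and the conclusion is immediate. Only after this can you take $W_i\cup\{v_i\}$ as the parts of $G$ and check adjacencies and balance.

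A cleaner repair, which ignores the structure of $G-K$ altogether, is Diestel's. The sets $X_i:=W_i\cup\{v_i\}$ for $1\le i\le k-1$ partition $V(G)$ and are independent: $W_i$ by your clique argument, and $v_i$ has no neighbour in $W_i$ by definition. So $G$ is a spanning subgraph of a complete $(k-1)$-partite graph. Since $e(G)=t_{k-1}(n)$, $G$ must equal that complete multipartite graph, and it must be balanced. The reason is that moving a vertex from a part of size $a$ to a part of size $b\le a-2$ gains $a-b-1>0$ edges, so the balanced complete $(k-1)$-partite graph is the unique edge-maximiser. Hence $G\cong T_{k-1}(n)$.
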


Note that 
$e(T_{k-1}(n))
\le \frac{(k-2)n^2}{2(k-1)}$ 
also holds.

\begin{lem}[Bodendiek et al.\cite{MR0732806}, Suzuki \cite{MR2746706}]\label{lem:edges_1planar}
Let $G$ be a  $1$-planar graph of order $n$. Then

\[
e(G) \le
\begin{cases}
\binom{n}{2}, & 1 \le n \le 6; \\[6pt]
4n - 9, & n = 7,9; \\[6pt]
4n - 8, & \text{otherwise}.
\end{cases}
\]
\end{lem}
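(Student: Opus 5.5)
The plan has three parts: treat the small orders directly, prove the general bound $4n-8$ by a crossing count plus Euler's formula, and then show that $n=7,9$ lose one more edge because no optimal $1$-planar graph exists on those orders.

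For $1\le n\le 6$, the bound $\binom n2$ is trivial. It is attained because $K_6$ (and hence every $K_n$ with $n\le 6$) has a $1$-planar drawing: draw $K_6$ with three pairwise disjoint crossing pairs, for instance around a triangular prism skeleton. So the content is the bound $4n-8$ for $n\ge 7$ and its strengthening to $4n-9$ for $n=7,9$.

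\textbf{General bound.} Fix a $1$-planar drawing of $G$ and assume $e(G)$ is maximum among $1$-planar graphs of order $n$. By this maximality we may assume the following.
\begin{enumerate}
\item $G$ is connected.
\item For every crossing of edges $ab$ and $cd$, the four endpoints induce a $K_4$ whose four non-crossing edges can be drawn along the crossing, forming a ``kite''.
\end{enumerate}
If some kite edge were missing it could be added without new crossings. If it were present but drawn elsewhere, it can be redrawn along the crossing, after handling multiplicities.

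Let $x$ be the number of crossings and let $P$ be the plane graph obtained by deleting both edges of every crossing pair. Then $e(P)=e(G)-2x$, and $P$ is connected because the kite edges survive. Each crossing lies in its own face of $P$, and that face is bounded by the $4$-cycle of its kite. The remaining faces have degree at least $3$. Euler's formula for $P$ gives $f(P)=e(P)-n+2$ and
\[
2e(P)=\sum_F \deg(F)\ge 3f(P)+x .
\]
Combining these yields $e(P)\le 3n-6-x$, that is, $e(G)\le 3n-6+x$. Applying the same inequality with $e(P)\ge 0$ and kite faces counted once each also gives $x\le n-2$. Hence $e(G)\le 4n-8$.

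\textbf{Equality analysis and $n=7,9$.} Equality forces $x=n-2$ and every face of $P$ to be a kite quadrilateral. So $G$ is an optimal $1$-planar graph, and its skeleton $P$ is a quadrangulation on $n$ vertices in which every face carries both diagonals, with simplicity of $G$ forcing these diagonals to be distinct edges. I would then show the following about $P$:
\begin{enumerate}
\item $P$ must be $3$-connected. A $2$-cut $\{u,v\}$ separating faces would produce a doubled diagonal $uv$ or a separating structure contradicting simplicity.
\item $P$ must have minimum degree $3$.
\item No such quadrangulation exists on $7$ or $9$ vertices. Bipartiteness of a quadrangulation, together with degree counting ($\sum \deg=4n-8$ and $\delta\ge 3$), limits the possible degree sequences, and one enumerates the few candidates. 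The cube on $8$ vertices and the pseudo-double-wheels on $2k\ge 10$ vertices exist, but nothing exists on $7$ or $9$ vertices.
\end{enumerate}

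Consequently $e(G)\le 4n-9$ for $n=7,9$. Tightness is obtained by explicit drawings, for example an optimal graph on $8$ (respectively $10$) vertices with one vertex suitably deleted, adjusting so that the edge loss is minimal. The general $4n-8$ case is attained by the optimal graphs above for $n=8$ and $n\ge 10$.

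\textbf{Main obstacle.} I expect the hardest step to be ruling out $3$-connected quadrangulations with all kites on $7$ and $9$ vertices, together with the justification that optimality forces $3$-connectivity of the skeleton. I would first attempt this by a degree-sequence plus bipartition-size argument. If that fails, I would fall back on a short case enumeration of quadrangulations with $\delta\ge3$ on those orders.
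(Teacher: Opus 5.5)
The paper gives no proof of this lemma; it is quoted from Bodendiek--Schumacher--Wagner and Suzuki, so your argument has to stand on its own. Its outline is the standard one and is viable: prove $4n-8$, show equality forces a quadrangulation skeleton, and show no such skeleton exists on $7$ or $9$ vertices. As written, however, two steps are not established.

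\textbf{Kite normalization.} Redrawing a missing or misplaced kite edge along one crossing is fine, but nothing excludes the same edge $ac$ being the kite edge required by two crossings, $X_1$ (of $ab,cd$) and $X_2$ (of $ab',cd'$). Suppose the angles between the half-edges towards $a$ and $c$ at $X_1$ and at $X_2$ lie on opposite sides of the closed curve $a\,X_1\,c\,X_2\,a$. Then $ac$ can follow only one of them, and ``after handling multiplicities'' does not resolve this. So the claim that every crossing sits alone in a $4$-face of $P$ is unproved. Separately, $e(P)\ge 0$ does not give $x\le n-2$; you need $4x\le 2e(P)$.

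You can avoid kites entirely by counting in the planarization. For connected $G$, $G^\times$ has $n+x$ vertices, $m+2x$ edges and $f=m+x-n+2$ faces, all of degree at least $3$. By Lemma~\ref{lem:fakevertices}(ii), a face of degree $d$ has at most $\lfloor d/2\rfloor\le d-2$ fake corners. Hence
\[
2(m+2x)\ge 3f\ \Rightarrow\ m\le 3n-6+x,\qquad 4x\le \sum_{F}\bigl(\deg(F)-2\bigr)=2n-4+2x\ \Rightarrow\ x\le n-2 .
\]
If $m=4n-8$, all of this is tight. So $G^\times$ is a triangulation with exactly one fake vertex on every face. Deleting the crossed edges merges the four triangles at each crossing into a $4$-cycle face on four distinct vertices. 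Thus $P$ is a quadrangulation with $2n-4$ edges, each face carrying both diagonals, and no redrawing is needed.

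\textbf{The case $n=7,9$.} You leave this as the main obstacle, but it takes two lines and needs no $3$-connectivity.

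First, $\delta(P)\ge 3$. A vertex $v$ of degree $2$ in $P$ with neighbours $a,b$ lies on two distinct faces $a\,v\,b\,y_1$ and $a\,v\,b\,y_2$. Each of these carries the diagonal $ab$, so $G$ would contain a double edge.

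Second, all faces of $P$ are even, so $P$ is bipartite with classes $X,Y$. Every edge has exactly one end in each class, so $3|X|\le e(P)=2n-4$ and likewise $3|Y|\le 2n-4$. Hence $n\le 2\lfloor (2n-4)/3\rfloor$, which fails for $n=7$ ($6<7$) and for $n=9$ ($8<9$).

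Tightness is not part of the statement. Note anyway that your suggested construction does not reach $4n-9$. Deleting a vertex from an optimal graph on $n+1$ vertices loses at least $6$ edges, since every vertex there has degree at least $6$, so it yields at most $4n-10$.
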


\begin{lem}[Brandenburg et al. \cite{MR3067240}, or  Zhang and Li \cite{MR4218488}] \label{lem:fakevertices}
Let $G$ be a $1$-plane graph, and let $G^{\times}$ be its planarization.  Then the following statements hold.
\begin{itemize}
\item[(i)] Fake vertices of $G^{\times}$  are pairwise non-adjacent;
\item[(ii)] for each face $F$ of $G^{\times}$, the number of occurrences of fake vertices on the facial walk of $F$
is at most $\bigl\lfloor\frac{\deg(F)}{2}\bigr\rfloor$.
\end{itemize}
\end{lem}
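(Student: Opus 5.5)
The plan is to prove both parts of Lemma~\ref{lem:fakevertices} directly from the definition of a good $1$-planar drawing and the construction of $G^\times$. For (i), suppose two fake vertices $z_1,z_2$ are adjacent in $G^\times$. Every edge of $G^\times$ is a segment of some edge $e$ of $G$ between consecutive points on $e$, where these points are either endpoints of $e$ or crossings on $e$. An edge $z_1z_2$ therefore arises only if some edge $e$ of $G$ passes through two distinct crossings $z_1$ and $z_2$. In that case $e$ is crossed at least twice, which contradicts $1$-planarity. Consequently every edge of $G^\times$ has at least one true endpoint.

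For (ii), fix a face $F$ and traverse its facial walk $\partial(F)$ as a cyclic sequence of vertex occurrences $w_0,w_1,\dots,w_{\deg(F)-1}$, where consecutive occurrences are joined by an edge of $G^\times$. If $\partial(F)$ has several components, apply the argument to each closed walk separately and sum the results, using $\sum\lfloor a_i/2\rfloor\le\lfloor\sum a_i/2\rfloor$.

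By (i), no two cyclically consecutive occurrences are both fake. In a cyclic sequence of length $d$ with no two consecutive marked positions, at most $\lfloor d/2\rfloor$ positions are marked, because each marked position can be paired injectively with the unmarked position that follows it. Hence the number of fake occurrences is at most $\lfloor\deg(F)/2\rfloor$.

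Two degenerate situations need checking. The first is a walk of length $1$ or $2$. A loop cannot occur since the graph is simple, and a closed walk $w_0w_1w_0$ that traverses a single edge twice is still covered by the consecutive-occurrence argument, since $w_0$ and $w_1$ are consecutive in both directions. The second is an isolated-vertex component. A component of length $0$ consists of a single true vertex, because fake vertices always have degree $4$, so it contributes nothing to the count.

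The main point that needs care is making the facial-walk bookkeeping rigorous, namely that consecutive occurrences in the walk really are joined by an edge of $G^\times$, so that (i) applies to them. This follows from the definition of the facial walk as the sequence of edge-occurrences on the boundary of $F$.
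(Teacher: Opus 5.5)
Your proof is correct: since each edge of $G$ is crossed at most once, every edge of $G^\times$ has a true endpoint. Consequently no two cyclically consecutive occurrences on any closed facial walk can both be fake, which gives the $\lfloor \deg(F)/2\rfloor$ bound; your handling of several boundary components via $\sum\lfloor a_i/2\rfloor\le\lfloor\sum a_i/2\rfloor$ and of the degenerate walks is also fine. The paper gives no proof of this lemma and only cites Brandenburg et al.\ and Zhang and Li, so there is no proof to compare against; yours is the standard direct justification of the cited facts.
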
%

\begin{lem}[Euler's formula, see page 101 in \cite{MR4874150}]\label{lem:euler}
Let $G$ be a connected plane graph with $n$ vertices, $m$ edges, and $f$ faces. Then
$
n-m+f=2.
$
\end{lem}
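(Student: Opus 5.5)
This is Euler's formula, and I would prove it by induction on the number of edges $m$ while keeping $G$ connected.

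For the base case, $G$ is a tree. Then $m=n-1$. A plane tree contains no cycle, so by the Jordan curve theorem (or simply because there are no closed curves to separate the plane) its complement $\mathbb{R}^2\setminus G$ is connected, giving $f=1$. Hence $n-m+f=n-(n-1)+1=2$.

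For the inductive step, $G$ is connected and not a tree, so it contains a cycle $C$. Pick an edge $e$ on $C$. Deleting $e$ keeps the graph connected, because $C-e$ still joins the two ends of $e$. So $G-e$ is a connected plane graph with $n$ vertices and $m-1$ edges, and by induction it satisfies $n-(m-1)+f(G-e)=2$.

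It then suffices to show $f(G-e)=f-1$. The points of $e$ lie on the simple closed curve $C$, and by the Jordan curve theorem the two sides of $C$ near $e$ belong to different faces of $G$. Removing $e$ merges exactly these two faces into one and leaves every other face unchanged. Therefore
\[
n-m+f = n-(m-1)+(f-1) = 2.
\]

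The only genuinely delicate step is this face-count claim. It needs two topological facts:
\begin{itemize}
\item an edge lying on a cycle borders two distinct faces, which is Jordan's theorem applied to $C$;
\item deleting the edge fuses exactly those two faces.
\end{itemize}
For the second fact I would argue locally: take a small neighbourhood strip around the interior of $e$. Its two halves lie in the two faces, and outside the strip nothing changes. Since the paper cites a standard textbook for this lemma, I would invoke these Jordan-curve facts as standard rather than reprove them.
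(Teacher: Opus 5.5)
Your proof is correct and takes the same route as the standard textbook proof that the paper cites (Diestel, p.~101): induction on $m$, with the fact that a plane tree has exactly one face as the base case, and deletion of an edge lying on a cycle, whose two incident faces merge so the face count drops by exactly one. One small correction: that a plane tree has only one face does not follow from the Jordan curve theorem, but from the fact that an arc does not separate the plane (and hence, by induction on edges, no plane forest does), so cite that fact in the base case instead.
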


The following lemma is well-known.
\begin{lem}[see page 45 in \cite{MR0898434}]\label{lem:plane2-connected}
Let $G$ be a plane graph. If every face of $G$ is bounded by a cycle, then $G$ is $2$-connected, and hence $\delta(G)\ge 2$.
\end{lem}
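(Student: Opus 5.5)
The statement is the classical fact that a plane graph in which every face is bounded by a cycle is $2$-connected. The plan is to show there is no cut vertex, using only the local structure of the faces around a vertex. Throughout we assume $G$ has at least three vertices, as is implicit in the intended use. We also read "every face is bounded by a cycle" as saying that $G$ is connected. A disconnected plane graph has a face whose boundary has more than one component, so it is not a single cycle.

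\textbf{Minimum degree.} Every edge lies on the boundary of some face, which is a cycle. Hence every edge lies on a cycle, so $G$ has no bridges. In particular there are no vertices of degree $1$. Since $G$ is connected with at least three vertices, there are no isolated vertices either. So $\delta(G)\ge 2$ will follow once $2$-connectivity is shown, and it is also immediate from what we have.

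\textbf{No cut vertex.} Suppose for contradiction that $v$ is a cut vertex of $G$.
\begin{enumerate}
\item List the edges at $v$ in the cyclic rotation order given by the embedding: $e_1,\dots,e_d$.
\item Since $G-v$ is disconnected, the edges at $v$ lead into at least two different components of $G-v$.
\item Going around the rotation, there is therefore a consecutive pair $e_i,e_{i+1}$ whose other ends $a$ and $b$ lie in different components of $G-v$.
\item The angle between $e_i$ and $e_{i+1}$ at $v$ belongs to a face $F$. Its facial walk passes $a,v,b$ consecutively.
\item By hypothesis $\partial(F)$ is a cycle $C$. Then $C-v$ is a path from $a$ to $b$ avoiding $v$. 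This contradicts the fact that $a$ and $b$ lie in different components of $G-v$.
\end{enumerate}
Hence $G$ is $2$-connected.

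The only delicate point is the fact used in step 4: consecutive edges in the rotation at $v$ appear consecutively on the boundary walk of the face occupying the angle between them. This is standard for plane graphs, by tracing the face boundary with the face kept on one side. I would cite it rather than reprove it.
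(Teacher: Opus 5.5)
Your proof is correct. The paper gives no argument for this lemma; it only cites Gross and Tucker \cite{MR0898434}. What you have is therefore a self-contained proof in place of a citation, and it is the standard one. The hypothesis forces connectivity. A cut vertex $v$ would then have two rotation-consecutive neighbours $a,b$ in different components of $G-v$. The cycle bounding the face in the angle between $va$ and $vb$ gives an $a$--$b$ path in $G-v$, a contradiction.

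Two small points:
\begin{enumerate}
\item The assumption $|V(G)|\ge 3$ is not an additional hypothesis. A face bounded by a cycle means the simple graph $G$ contains a cycle, so $G$ already has at least three vertices.
\item Step 4 needs less than the facial-walk fact you intend to cite. It is enough that both $va$ and $vb$ lie on $\partial(F)$ for the face $F$ occupying that angle. A cycle through $v$ has exactly two edges at $v$, so these must be $va$ and $vb$.
\end{enumerate}

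What your version adds beyond the citation is this: the paper's later uses only need connectivity and $\delta\ge 2$. These are Lemma~\ref{lem:exceptional-faces-lowdegree} and Corollary~\ref{cor:lowdegree-special}, applied to $H^\times$. Your first paragraph, which shows $G$ is connected and bridgeless, already gives both. The cut-vertex argument is only needed for full $2$-connectivity.
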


\begin{obs}\label{obs_no1or2faces}
Let $G$ be a  connected 
plane graph of order  $n\ge 3$. Then every face of $G$ is a $3^+$-face.
\end{obs}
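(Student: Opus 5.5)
The plan is to argue by contradiction, using only the definition of a face and of its degree. Let $G$ be a connected plane graph with $n\ge 3$ vertices and let $F$ be a face. Since $G$ has at least three vertices and is connected, it has at least two edges. Every face boundary in a connected plane graph with at least one edge contains at least one edge, so $\deg(F)\ge 1$. It therefore suffices to rule out $\deg(F)=1$ and $\deg(F)=2$.

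First I would exclude degree one. A closed facial walk of length $1$ would consist of a single edge whose two ends coincide, that is, a loop. Simple graphs have no loops, so $\deg(F)\ne 1$.

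Next I would exclude degree two. A closed facial walk of length $2$ has two possible forms. It could use two distinct edges between the same pair of vertices, which would be parallel edges and is impossible in a simple graph. Otherwise it traverses a single edge $uv$ once in each direction. In that case the boundary of $F$ is exactly the edge $uv$ together with its ends. Because $G$ is connected, the whole of $G$ lies in the closure of the region bounded by this walk. So the component structure forces $G$ to consist of that single edge, since any further edge or vertex would be incident with $F$ or would separate it and would appear on $\partial(F)$. This gives $n=2$, contradicting $n\ge 3$.

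I expect the second case of degree two to be the main obstacle: one must check rigorously that a face whose boundary walk is $u\,v\,u$ forces $G=K_2$. The cleanest justification is Euler's formula (Lemma~\ref{lem:euler}) applied locally. Alternatively, note that the edge $uv$ is then a bridge with $F$ on both sides, and the connectedness of $G$ with $n\ge3$ gives another edge at $u$ or $v$. That edge would appear in the rotation at that vertex between the two sides of $uv$, so it would lie on the facial walk of $F$, contradicting a length of $2$.
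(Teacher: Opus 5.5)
Your proof is correct. The paper records this as an observation without proof, and your case analysis is the standard justification it takes for granted: loops and parallel edges are excluded by simplicity, and a facial walk $u\,v\,u$ forces, via the rotation at each end, $d_G(u)=d_G(v)=1$, hence $G=K_2$ by connectivity. Rely on that rotation argument, and drop the earlier sentence about ``the region bounded by this walk'' (a single edge bounds no region) and the vague appeal to Euler's formula; neither is needed.
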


The next lemma follows immediately from Euler's formula and the handshake lemma on faces for plane graphs. 
The proof is left to readers.

\begin{lem}
\label{lem:planeedge_count}
Let $G$ be a connected plane graph of order $n\ge 3$. Then
\begin{equation*}\label{eq:planeeq}
e(G)= 3n - 6 - \sum_{F \in \mathcal{F}_{\ge 4}(G)} \bigl(\deg(F) - 3\bigr).
\end{equation*}
\end{lem}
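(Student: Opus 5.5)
The plan is to combine Euler's formula (Lemma~\ref{lem:euler}) with the handshake identity for faces. Both apply because $G$ is a connected plane graph.

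First, let $m=e(G)$ and $f=|\mathcal{F}(G)|$. Since $n\ge 3$ and $G$ is connected, Observation~\ref{obs_no1or2faces} says every face has degree at least $3$. Hence $\mathcal{F}(G)=\mathcal{F}_3(G)\cup\mathcal{F}_{\ge 4}(G)$.

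Second, I will use the facial handshake identity
\[
\sum_{F\in\mathcal{F}(G)}\deg(F)=2m .
\]
This holds because each edge contributes exactly two edge-occurrences to the facial walks. A bridge is traversed twice on the boundary of the same face, and $\deg(F)$ counts occurrences with multiplicity, so bridges cause no problem. Subtracting $3$ from each term gives
\[
\sum_{F\in\mathcal{F}(G)}\bigl(\deg(F)-3\bigr)=2m-3f .
\]
Every $3$-face contributes $0$ to this sum, so the left-hand side equals $\sum_{F\in\mathcal{F}_{\ge 4}(G)}(\deg(F)-3)$.

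Third, Euler's formula gives $f=2-n+m$. Substituting, $2m-3f=2m-6+3n-3m=3n-6-m$. Therefore
\[
m=3n-6-\sum_{F\in\mathcal{F}_{\ge 4}(G)}\bigl(\deg(F)-3\bigr),
\]
which is the claimed identity.

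There is no real obstacle here. The only point needing care is the justification of $\sum_F\deg(F)=2m$ when facial walks repeat edges, which is handled by counting edge-occurrences as above. The hypothesis $n\ge 3$ is used only to exclude $1$- and $2$-faces, and those would not affect the identity anyway, since the sum runs only over $4^+$-faces.
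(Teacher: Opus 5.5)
Your proof is correct and follows exactly the route the paper intends: the paper leaves the proof to the reader, noting only that it follows immediately from Euler's formula and the handshake lemma on faces. One correction to your closing remark: faces of degree less than $3$ \emph{would} affect the identity. Each such face contributes $\deg(F)-3<0$ to $2m-3f$ but is omitted from the sum over $\mathcal{F}_{\ge 4}(G)$; for instance, $K_2$ has $m=1$ while the formula gives $0$. So the hypothesis $n\ge 3$ is genuinely needed, and it enters exactly where you invoke Observation~\ref{obs_no1or2faces}.
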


The next lemma establishes a lower bound on 
$n_2(G)+n_3(G)$
in a connected plane graph $G$ that has only 4-faces apart from some exceptional faces,
and will be used 
in the proof of Theorem~\ref{thm:K3}.

\begin{lem}\label{lem:exceptional-faces-lowdegree}
Let \(G\) be a connected plane graph with \(\delta(G)\ge 2\).
If 
$F_1,\dots,F_t$ are the faces
of $G$ 
with $\deg(F_i)\ne 4$
and 
$s:=\sum_{i=1}^t \bigl(\deg(F_i)-4\bigr)$, 
then
\[
n_2(G)+n_3(G)\ge \left\lceil \frac{s+n_3(G)}{2}\right\rceil +4 .
\]

\end{lem}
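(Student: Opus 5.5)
The plan is to combine Euler's formula with the two handshake identities (over vertices and over faces) to get an exact ``charge'' identity with weight $4-d(v)$ on vertices and $\deg(F)-4$ on faces, and then simply drop the non-positive terms. Let $n,m,f$ be the numbers of vertices, edges and faces of $G$. Since $G$ is connected, Lemma~\ref{lem:euler} gives $n-m+f=2$.

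First, the face handshake lemma. Every edge contributes exactly two edge-occurrences to the facial walks, counted with multiplicity; this is how $\deg(F)$ is defined, so bridges and repeated edges are automatically accounted for. Hence
\[
2m=\sum_{F\in\mathcal F(G)}\deg(F)=4f+\sum_{i=1}^t\bigl(\deg(F_i)-4\bigr)=4f+s,
\]
where only the exceptional faces $F_1,\dots,F_t$ contribute to the correction term. Note that $s$ may be negative, because of $3$-faces; this causes no problem. Substituting $f=(2m-s)/4$ into Euler's formula and multiplying by $4$ gives $4n-2m=8+s$.

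Second, the vertex handshake lemma $2m=\sum_v d_G(v)$ turns this into
\[
\sum_{v\in V(G)}\bigl(4-d_G(v)\bigr)=8+s .
\]
Since $\delta(G)\ge 2$, the only vertices with positive contribution are those of degree $2$ (contribution $2$) and degree $3$ (contribution $1$). Degree-$4$ vertices contribute $0$, and vertices of degree at least $5$ contribute negatively. Dropping the nonpositive terms yields $2n_2(G)+n_3(G)\ge 8+s$.

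Finally, write $n_2+n_3=\tfrac12\bigl((2n_2+n_3)+n_3\bigr)\ge \tfrac12(8+s+n_3)=4+\tfrac{s+n_3}{2}$. Since the left-hand side is an integer, we may take the ceiling, which gives the stated bound. There is no real obstacle here. The only points needing care are that facial degrees are counted with multiplicity, so the face handshake identity holds even when facial walks repeat edges, and that the hypothesis $\delta(G)\ge 2$ is exactly what excludes degree-$1$ vertices, which would otherwise contribute $3$ each to the sum.
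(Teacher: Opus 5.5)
Your proposal is correct and follows essentially the same route as the paper. The face handshake identity $2m=4f+s$ together with Euler's formula gives $4n-2m=s+8=\sum_{v}(4-d_G(v))\le 2n_2(G)+n_3(G)$, and integrality of $n_2(G)+n_3(G)$ then yields the ceiling.
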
\begin{proof}
Let \(n,m,f\) be the numbers of vertices, edges, and faces of \(G\), respectively.
Since every face of \(G\) is a \(4\)-face except for exactly \(t\) faces
\(F_1,\dots,F_t\), we have
$$
2m =4(f-t)+\sum_{i=1}^t \bigl(\deg(F_i)\bigr)
=4(f-t)+\sum_{i=1}^t \bigl(\deg(F_i)-4\bigr)+4t
=4f+s.
$$
Together with Euler's formula \(n-m+f=2\), this gives
$
2m=4n-s-8.
$
Hence
\[
s+8=4n-2m=\sum_{v\in V(G)}(4-d_G(v))
\le 2n_2(G)+n_3(G),
\]
where the last inequality follows 
from the condition $\delta(G)\ge 2$.
Since \(n_2(G)+n_3(G)\) is an integer, it follows that
$$
n_2(G)+n_3(G)\ge 
\left\lceil \frac{s+n_3(G)}{2}\right\rceil +4.
$$ 
Therefore, we complete the proof.
\end{proof}

\begin{cor}
	\label{cor:lowdegree-special}
Let \(G\) be a connected plane graph with \(\delta(G)\ge 2\). Then the following statements hold:
\begin{enumerate}
	[label=(\roman*)]

\item if every face of \(G\) is a  \(4\)-face except for one \(k\)-face,
then
$n_2(G)+n_3(G)\ge 
\left\lceil \frac{k}{2}\right\rceil +2$, and 

\item if every face of \(G\) is a \(4\)-face except for exactly two \(6\)-faces, then
$
n_2(G)+n_3(G)\ge 6.
$
Moreover, if \(n_3(G)\ge 1\), then
$
n_2(G)+n_3(G)\ge 7.
$
\end{enumerate}
\end{cor}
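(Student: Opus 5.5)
Both parts come from Lemma~\ref{lem:exceptional-faces-lowdegree}. We compute the parameter $s$ in each situation, substitute it into the bound, and in one subcase of (i) go back to the identity from the proof of that lemma.

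For (i), suppose every face is a $4$-face except one $k$-face $F_1$. If $k\ne 4$, then $t=1$ and $s=k-4$. Lemma~\ref{lem:exceptional-faces-lowdegree} gives
\[
n_2(G)+n_3(G)\ge \left\lceil \frac{k-4+n_3(G)}{2}\right\rceil+4\ge \left\lceil \frac{k-4}{2}\right\rceil+4=\left\lceil \frac{k}{2}\right\rceil+2,
\]
where the middle step uses $n_3(G)\ge 0$ and the last step uses $\lceil (k-4)/2\rceil=\lceil k/2\rceil-2$. If $k=4$, all faces are $4$-faces and the lemma applies with $t=0$, $s=0$. It gives $n_2(G)+n_3(G)\ge 4=\lceil 4/2\rceil+2$, so that case is also covered.

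One small point to check is the case $k<4$, where $s$ is negative. The same computation still works: $s+8=4n-2m$ holds for any sign of $s$. The only requirement is that the face counting in the lemma's proof is valid, and it is, because the lemma allows arbitrary exceptional degrees.

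For (ii), suppose every face is a $4$-face except two $6$-faces. Then $t=2$ and $s=2+2=4$. The lemma gives
\[
n_2(G)+n_3(G)\ge \left\lceil\frac{4+n_3(G)}{2}\right\rceil+4\ge 6.
\]
If $n_3(G)\ge 1$, the bound becomes $\lceil (4+n_3(G))/2\rceil+4\ge \lceil 5/2\rceil+4=7$. This uses the monotonicity of $x\mapsto\lceil (4+x)/2\rceil$.

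No step is a real obstacle. The only care needed is with the ceilings, and with the degenerate case $k=4$ in (i), which is why we state it separately.
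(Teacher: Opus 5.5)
Your proposal is correct and is the intended argument: the paper says the corollary follows directly from Lemma~\ref{lem:exceptional-faces-lowdegree} and leaves the proof to the reader, and substituting $s=k-4$ (resp.\ $s=4$), then using $n_3(G)\ge 0$ (resp.\ $n_3(G)\ge 1$) and handling the ceilings as you do, is that derivation. Treating $k=4$ and $k<4$ separately does no harm but is unnecessary, since the lemma covers $t=0$ and places no sign restriction on $s$.
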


Corollary~\ref{cor:lowdegree-special} follows directly from 
Lemma~\ref{lem:exceptional-faces-lowdegree}.
Its proof is left to readers.

\section{An edge-count formula of $1$-planar graphs }


In this section, we establish a specialized edge-count formula for 1-planar graphs. Before doing so, we first introduce some related concepts and notation.

Let \(G\) be a plane graph. For a vertex \(u\in V(G)\) and a face \(F\) of \(G\), define the \emph{incidence multiplicity} of \(u\) with \(F\), denoted by $\eta(u,F)$, to be the number of occurrences of \(u\) on the facial walk of \(F\). 
Thus we say that \(u\) is incident with \(F\) exactly \(s\) times if $\eta(u,F)=s$. 
For instance, if $G$ is the graph shown in Figure~\ref{fig:face_incidence},
then 
$$
\eta(u,F^0)=0,\quad 
\eta(u,F^1)=2, \quad 
\eta(u,F^2)=\eta(u,F^3)=1.
$$

\begin{figure}[H]
\centering
\begin{tikzpicture}[scale=0.6]
	\begin{pgfonlayer}{nodelayer}
		\node [style=whitenode] (0) at (-2, 2) {}; 
		\node [style=whitenode] (1) at (2, 2) {};
		\node [style=whitenode] (2) at (-1, -1){};
		\node [style=whitenode] (3) at (2, -2){}; 
		\node [style=whitenode] (4) at (-2, -2){}; 
		\node [style=whitenode] (5) at (0, 0) {$u$};
		\node [style=none] (6) at (0, -1.5){$F^1$};
		\node [style=whitenode] (8) at (3, 0) {};
		\node [style=none] (9) at (1.5, 0) {$F^2$};
		\node [style=none] (10) at (0, 1.25) {$F^3$};
		\node [style=none] (11) at (-3.25, 0) {$F^0$};
	\end{pgfonlayer}
	\begin{pgfonlayer}{edgelayer}
		\draw (0) to (3);
		\draw (1) to (2);
		\draw (0) to (1);
		\draw (0) to (4);
		\draw [bend left=360, looseness=0.75] (3) to (4);
		\draw (8) to (3);
		\draw (1) to (8);
	\end{pgfonlayer}
\end{tikzpicture}
\caption{Incidence multiplicities of the vertex $u$ with faces $F^0,F^1,F^2,F^3$}
\label{fig:face_incidence}
\end{figure}
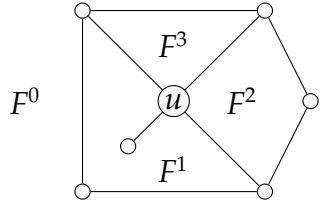

For any subset \(\Gamma\) of 
$\F(G)$, 
let
$\eta(u,\Gamma):=\sum\limits_{F\in \Gamma} \eta(u,F)$.
That is, \(\eta(u,\Gamma)\) is the sum of the incidence multiplicities of \(u\) with the faces in \(\Gamma\).

\begin{obs}\label{ob:vertex_mul_number}
	Let \(G\) be a plane graph and $u$ be a vertex of $G$. 
	Then
	$\eta(u,\mathcal{F}(G))=d_G(u) $, and  $0\le \eta(u,F)\le d_G(u) $ for each face $F\in  \mathcal{F}(G)$. 
\end{obs}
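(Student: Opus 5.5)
The plan is to prove both assertions by a local count of corner occurrences, one vertex at a time.

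Fix a vertex $u$ of $G$. If $d_G(u)=0$, then $u$ is an isolated vertex. It lies on the boundary of exactly one face, but that boundary contributes no edge-occurrences. Following the convention that facial walks are made up of edge-occurrences, we record $\eta(u,F)=0$ for every face (or we note that this degenerate case is the convention). From now on assume $d_G(u)\ge 1$.

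\textbf{Step 1: rotation and angles.} Consider the cyclic order (rotation) $e_1,e_2,\dots,e_d$ of the $d=d_G(u)$ edge-ends at $u$ in the plane embedding, where loops are not present since $G$ is simple. A small disk around $u$ is cut by these edge-ends into exactly $d$ sectors (angles). The sector between $e_i$ and $e_{i+1}$ (indices mod $d$) lies in exactly one face $F$. When the facial walk of $F$ is traversed, it enters $u$ along $e_i$ and leaves along $e_{i+1}$, so this sector gives exactly one occurrence of $u$ on $\partial(F)$.

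\textbf{Step 2: the bijection.} Conversely, each occurrence of $u$ in the facial walk of a face $F$ consists of a consecutive pair of edges at $u$ that are adjacent in the rotation. Such a pair determines exactly one sector. When $d=1$, the single sector corresponds to the single occurrence, in which the walk goes in and back along the same edge. This gives a bijection between the sectors at $u$ and the pairs $(F,\text{occurrence of }u\text{ in }\partial(F))$. Summing over all faces yields $\eta(u,\mathcal F(G))=d$.

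\textbf{Step 3: the bounds.} The inequality $\eta(u,F)\ge 0$ is trivial. For the upper bound, the nonnegative numbers $\eta(u,F)$ sum to $d$ over all faces, so each satisfies $\eta(u,F)\le d_G(u)$.

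The only point requiring care is the bijection in Step 2 between occurrences in facial walks and the angles at $u$. This is the standard face-tracing (rotation system) description of facial walks. I would cite it as standard, or argue briefly that the facial-walk traversal rule is "next edge in the rotation," which makes it immediate.
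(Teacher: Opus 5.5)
Your proposal is correct. The paper states this as an observation with no proof, and your bijection between the $d_G(u)$ angular sectors at $u$ and the occurrences of $u$ on facial walks is exactly the standard fact it takes for granted; both the sum identity and the bound $\eta(u,F)\le d_G(u)$ follow from it as you say. Two small refinements. First, treat the pair of edges at an occurrence as ordered (incoming, outgoing) with respect to a fixed traversal orientation, so that the two sectors at a vertex of degree $2$ are distinguished. Second, the isolated-vertex case is, as you note, a matter of convention, and it never arises in the paper, which applies the observation only to fake vertices of degree $4$.
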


Let  \(G\) be a 1-plane graph
with the planarization $G^\times$. For \(z\in V_{\mathrm{fake}}(G^\times)\), we define
\equ{eq3-1}
{
a(z):=\eta(z,\mathcal F_{\ge 4}(G^\times)).
}
Note that $0\le a(z)=4-\eta(z,\mathcal F_{3}(G^\times))\le 4$.
For any $F\in\mathcal F(G^\times)$, we define 
\equ{eq3-2}
{
c(F):=\sum_{u \in \partial(F)\cap 
V_{\mathrm{fake}}(G^\times)} \eta(u,F).
}
%

\begin{lem}
\label{lem:eqof_incidences_faces_fakev}
	Let $G$ be a $1$-plane graph with the planarization $G^\times$. Then
	\[
	\sum_{z\in V_{\mathrm{fake}}(G^\times)} a(z)=\sum_{F\in\mathcal F_{\ge4}(G^\times)} c(F).
	\]
\end{lem}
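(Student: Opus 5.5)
This is a double-counting identity. We count the set of incidences between fake vertices and $4^+$-faces of $G^\times$ in two ways, with each incidence weighted by its multiplicity $\eta(z,F)$. Concretely, we consider the quantity
\[
S:=\sum_{z\in V_{\mathrm{fake}}(G^\times)}\ \sum_{F\in\mathcal F_{\ge 4}(G^\times)} \eta(z,F).
\]
Both sides of the lemma should turn out to equal $S$ once the order of summation is exchanged.

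First, we fix $z$ and sum over $F$. The inner sum is $\eta(z,\mathcal F_{\ge4}(G^\times))$, which by definition \eqref{eq3-1} is $a(z)$. Summing over all fake vertices then gives $\sum_z a(z)=S$.

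Second, we fix a face $F\in\mathcal F_{\ge4}(G^\times)$ and sum over the fake vertices $z$. Here $\eta(z,F)=0$ whenever $z$ does not appear on $\partial(F)$. So the sum reduces to the fake vertices lying on $\partial(F)$, each counted with multiplicity $\eta(z,F)$, and this is exactly $c(F)$ as defined in \eqref{eq3-2}. Both summations are finite, so exchanging them is legitimate, and summing over all $4^+$-faces gives $\sum_F c(F)=S$.

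Comparing the two expressions proves the lemma. The only point needing care is that multiplicities are handled consistently. A fake vertex may occur several times on a single facial walk; this happens, for instance, when $F$ is incident with $z$ in two of its four angles. The definitions of $a(z)$ and $c(F)$ both use the same $\eta(z,F)$, so such repeated occurrences are counted identically on both sides. I expect no genuine obstacle beyond this check.
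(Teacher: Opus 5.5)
Your proposal is correct and takes essentially the same approach as the paper: the paper's proof counts the multiset $\Phi$ of fake-vertex/$4^+$-face incidences with multiplicity $\eta(z,F)$, which is exactly your double sum $S$ evaluated in two orders. Your explicit check that both $a(z)$ and $c(F)$ are built from the same $\eta(z,F)$ is the only point that needs verifying, and you handled it.
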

\begin{proof}
	Let $\Phi$ be the multiset of incidences between fake vertices of $G^\times$ and faces in $\mathcal F_{\ge4}(G^\times)$, where each incidence $(z,F)$ appears with multiplicity equal to the incidence multiplicity of $z$ with $F$. Thus, we have

	$$
	\sum_{z\in V_{\mathrm{fake}}(G^\times)} a(z)=|\Phi|= \sum_{F\in\mathcal F_{\ge4}(G^\times)} c(F).
	$$
\end{proof}

\begin{defi} 
	\label{def:pAB}
For any $1$-plane graph $G$,
we define invariants $A$, $B$ and $C$ as follows:
\begin{align}
A(G)&:= \sum_{z\in V_{\mathrm{fake}}(G^\times)}\bigl(a(z)-2\bigr), 
\label{eq:def-A}\\
B(G)&:= \sum_{F\in\mathcal F_{\ge 4}(G^\times)}
\Bigl(\tfrac{\deg(F)}{2}-c(F)\Bigr),
\label{eq:def-B}\\
C(G)&:= \sum_{F\in\mathcal F_{\ge 4}(G^\times)}\bigl(\deg(F)-4\bigr). 
\label{eq:def-p}
\end{align}
\end{defi}

In contexts where no confusion arises, $A(G)$, $B(G)$, and $C(G)$ will be denoted simply by $A$, $B$, and $C$, respectively.
A corollary follows directly from 
Lemma~\ref{lem:eqof_incidences_faces_fakev}.

\begin{cor}\label{cor3-1}
	For any $1$-planar graph $G$, 
$$
A+B=\frac 12 
\sum_{F\in\mathcal F_{\ge 4}(G^\times)} \deg(F)
-2|V_{fake}(G^\times)|.
$$
\end{cor}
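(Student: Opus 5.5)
The identity in Corollary~\ref{cor3-1} will follow by adding the defining sums for $A$ and $B$ and then applying Lemma~\ref{lem:eqof_incidences_faces_fakev}. Write $x=|V_{\mathrm{fake}}(G^\times)|$. From Definition~\ref{def:pAB},
\[
A+B=\sum_{z\in V_{\mathrm{fake}}(G^\times)} a(z)-2x+\sum_{F\in\mathcal F_{\ge4}(G^\times)}\frac{\deg(F)}{2}-\sum_{F\in\mathcal F_{\ge4}(G^\times)} c(F).
\]
Lemma~\ref{lem:eqof_incidences_faces_fakev} states that the first and last sums are equal, since both count, with multiplicity, the incidences between fake vertices and $4^+$-faces of $G^\times$. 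These two sums therefore cancel, leaving
\[
A+B=\frac12\sum_{F\in\mathcal F_{\ge4}(G^\times)}\deg(F)-2x,
\]
which is the claimed identity.

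The argument has three steps. First, expand $A$ as $\sum_z a(z)-2x$. Second, expand $B$ as $\frac12\sum_F \deg(F)-\sum_F c(F)$. Third, cancel the two incidence sums using the double-counting lemma.

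There is no real obstacle here. The only point to check is that the sums are finite and that $a(z)$ and $c(F)$ both count multiplicities $\eta(\cdot,\cdot)$ in the same way, so that the cancellation is legitimate. Both definitions use $\eta$, so this holds. Connectivity is not needed, because the identity is purely combinatorial bookkeeping over a fixed drawing.
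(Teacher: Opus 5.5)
Your proposal is correct and follows essentially the paper's own route: expand $A$ and $B$ from Definition~\ref{def:pAB}, then cancel $\sum_z a(z)$ against $\sum_F c(F)$ using the double-counting identity of Lemma~\ref{lem:eqof_incidences_faces_fakev}. Your remark that both quantities are sums of the same multiplicities $\eta(z,F)$ over fake vertices and $4^+$-faces is precisely why that cancellation is valid.
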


In this section, we will present some properties on $A(G), B(G)$ and $C(G)$, which will be applied in  proving our main results.

\subsection{Edge-count formula }

The following theorem provides a convenient edge-count identity for a connected $1$-plane graph in terms of the above parameters $A$, $B$ and $C$, and it will serve as a  tool in the proofs of Theorems~\ref{thm:K3} and~\ref{thm:K4}.

\begin{thm}[\textbf{Edge-count formula}]\label{lem:maintool}
Let $G$ be a connected $1$-plane graph with $n\ge 3$ vertices and $m$ edges.
Then, 
\begin{equation*}
m = 3n-6  - \frac{A+B}{2}-\frac34\,C.
\end{equation*}
\end{thm}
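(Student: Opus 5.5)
Apply Lemma~\ref{lem:planeedge_count} to the planarization $G^\times$, and then convert the resulting identity back to $G$ with Corollary~\ref{cor3-1}.

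\textbf{Setting up the planarization.} Since $G$ is connected, so is $G^\times$. It has $n+x$ vertices, where $x=|V_{\mathrm{fake}}(G^\times)|$, and $n+x\ge 3$. Each crossing removes two edges of $G$ and creates four edges of $G^\times$, so $e(G^\times)=m+2x$. Lemma~\ref{lem:planeedge_count} applied to $G^\times$ gives
\[
m+2x = 3(n+x)-6-\sum_{F\in\mathcal F_{\ge 4}(G^\times)}\bigl(\deg(F)-3\bigr),
\]
that is,
\[
m=3n-6+x-\sum_{F\in\mathcal F_{\ge4}(G^\times)}\bigl(\deg(F)-3\bigr).
\]

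\textbf{Rewriting the face sum.} Write $S:=\sum_{F\in\mathcal F_{\ge4}(G^\times)}\deg(F)$ and let $f_{\ge4}:=|\mathcal F_{\ge4}(G^\times)|$. Then
\[
C=S-4f_{\ge4},\qquad \sum_{F\in\mathcal F_{\ge4}(G^\times)}\bigl(\deg(F)-3\bigr)=S-3f_{\ge4}=\tfrac14 S+\tfrac34 C.
\]
Corollary~\ref{cor3-1} states $A+B=\tfrac12 S-2x$, so $x=\tfrac14 S-\tfrac{A+B}{2}$.

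\textbf{Substituting.} Putting both expressions into the formula for $m$ gives
\[
m=3n-6+\tfrac14 S-\tfrac{A+B}{2}-\tfrac14 S-\tfrac34 C=3n-6-\tfrac{A+B}{2}-\tfrac34 C,
\]
which is the claimed identity.

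The only delicate point is verifying the hypotheses. Lemma~\ref{lem:planeedge_count} requires $G^\times$ to be a connected plane graph on at least three vertices. Connectivity holds because subdividing crossing edges at a common new vertex preserves connectedness, and the vertex count is at least $n\ge 3$. No other step should cause difficulty; in particular, Corollary~\ref{cor3-1} is a purely combinatorial double count (Lemma~\ref{lem:eqof_incidences_faces_fakev}) and needs no restriction on the face degrees.
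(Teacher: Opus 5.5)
Your proposal is correct and is essentially the paper's proof: apply Lemma~\ref{lem:planeedge_count} to $G^\times$ with $n+x$ vertices and $m+2x$ edges, then use Corollary~\ref{cor3-1} to rewrite the face sum. The paper shows directly that $\frac{A+B}{2}+\frac34 C=\sum_{F\in\mathcal F_{\ge4}(G^\times)}(\deg(F)-3)-x$, whereas you solve for $x$ and substitute, which is the same algebra in a different order.
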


\begin{proof}
Let $n':=|V(G^\times)|$,  $m':=|E(G^\times)|$
and 
 $x:=|V_{\mathrm{fake}}(G^\times)|$.
 It can be verified that
  $n'=n+x$ and $m'=m+2x$.
 
By Lemma \ref{lem:planeedge_count}, we have
$
m' =3n' - 6 - \sum_{F \in \mathcal{F}_{\ge 4}(G^\times)} \bigl(\deg(F) - 3\bigr).
$
As $n'=n+x$ and $m'=m+2x$, we have 
\Eqn{eq:edge_count_with_d_x}
{
m+2x&=&3(n+x)-6 -\sum_{F \in \mathcal{F}_{\ge 4}(G^\times)}
\bigl(\deg(F) - 3\bigr);
\nonumber  \\
m &=& 3n-6 -\sum_{F \in \mathcal{F}_{\ge 4}(G^\times)} \bigl(\deg(F) - 3\bigr)+x.
}

By Corollary~\ref{cor3-1} and 
the definition of $C$ in 
(\ref{eq:def-p}), 
\Eqn{eq:ABC}
{
\frac{A+B}{2}+\frac{3}{4}
C
&=&\frac14\sum_{F\in\mathcal F_{\ge4}(G^\times)}\deg(F)-x+\frac34\sum_{F\in\mathcal F_{\ge4}(G^\times)}\bigl(\deg(F)-4\bigr)\nonumber \\
&=&\sum_{F\in\mathcal F_{\ge4}(G^\times)}\frac14\deg(F)+\sum_{F\in\mathcal F_{\ge4}(G^\times)} \frac34\bigl(\deg(F)-4\bigr)-x
\nonumber\\
&=&\sum_{F\in\mathcal F_{\ge4}(G^\times)}\bigl(\deg(F)-3\bigr)-x.
}
Combining  (\ref{eq:edge_count_with_d_x}) with (\ref{eq:ABC}) yields that 
\[
m=3n-6-\frac{A+B}{2}
-\frac{3}{4}C,
\]
which completes the proof.
\end{proof}

\subsection{Properties of the 
invariants 
$A, B$ and $C$
} 

\begin{lem}
	\label{obs:pB-nonneg}
	Let $G$ be a $1$-plane graph of order $n \ge 3$.
Then $B\ge 0$ and $C\ge 0$, and 
\begin{enumerate}
	\item[(i)]
 $B=0$ holds if and only if $\frac{\deg(F)}{2}=c(F)$ for every $4^+$-face $F$ of $G^\times$ ; and 
 \item[(ii)]
 $C=0$ holds if and only if every $4^+$-face $F$ of $G^\times$  is a 4-face.
 \end{enumerate} 
\end{lem}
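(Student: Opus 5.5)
The plan is to prove both nonnegativity claims, together with the equality characterizations (i) and (ii), termwise. Each of $B$ and $C$ is a sum over $\mathcal F_{\ge 4}(G^\times)$. It therefore suffices to show that every summand is nonnegative. Once that is done, the sum is zero if and only if every summand is zero, which gives (i) and (ii) at once.

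For $C$, each summand is $\deg(F)-4$ with $F\in\mathcal F_{\ge 4}(G^\times)$, so it is nonnegative by definition. It vanishes exactly when $\deg(F)=4$. Hence $C\ge 0$, and $C=0$ if and only if every $4^+$-face of $G^\times$ is a $4$-face, which is (ii). The only subtlety is making sure $\deg(F)$ is well defined for faces whose boundary is disconnected. It is, because $\deg(F)$ counts edge-occurrences over all boundary walks. So there is nothing more to check here.

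For $B$, I need $c(F)\le \deg(F)/2$ for every face $F$ of $G^\times$. By definition (\ref{eq3-2}), $c(F)$ is exactly the number of occurrences of fake vertices on the facial walk of $F$. By Lemma~\ref{lem:fakevertices}(ii), this number is at most $\lfloor \deg(F)/2\rfloor\le \deg(F)/2$. Thus each summand $\tfrac{\deg(F)}{2}-c(F)$ is nonnegative, so $B\ge 0$. Moreover $B=0$ exactly when equality $c(F)=\deg(F)/2$ holds on every $4^+$-face, which is (i).

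The only step needing care is identifying $c(F)$ with the occurrence count in Lemma~\ref{lem:fakevertices}(ii). Since $\eta(u,F)$ counts the occurrences of $u$ on $\partial(F)$, summing over the fake vertices on $\partial(F)$ counts all fake-vertex occurrences on $\partial(F)$. The identification is therefore immediate. If one wanted a self-contained argument instead of citing Lemma~\ref{lem:fakevertices}, it would go as follows. By Lemma~\ref{lem:fakevertices}(i), fake vertices are pairwise non-adjacent. So in each closed boundary walk, consecutive vertex occurrences cannot both be fake. Fake occurrences therefore make up at most half of the positions of that walk, and summing over all boundary walks of $F$ gives at most $\deg(F)/2$. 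The hypothesis $n\ge 3$ is only needed so that $G^\times$ has no degenerate faces, as in Observation~\ref{obs_no1or2faces}.
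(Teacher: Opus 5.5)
Your proposal is correct and matches the paper's proof: each summand $\tfrac{\deg(F)}{2}-c(F)$ is nonnegative by Lemma~\ref{lem:fakevertices}(ii), each $\deg(F)-4$ is nonnegative by the definition of $\mathcal F_{\ge 4}(G^\times)$, and the equality cases are read off summand by summand. Your optional self-contained derivation of the bound from Lemma~\ref{lem:fakevertices}(i) is also sound; note, though, that the hypothesis $n\ge 3$ plays no role in this lemma.
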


\begin{proof}
Recall that
$
B=\sum_{F\in\mathcal F_{\ge 4}(G^\times)}\Bigl(\frac{\deg(F)}{2}-c(F)\Bigr).
$
By Lemma~\ref{lem:fakevertices}(ii), $
\frac{\deg(F)}{2}-c(F)\ge 0
$ for every face $F\in\mathcal F_{\ge4}(G^\times)$, 
and thus $B\ge 0$.  Furthermore, $B=0$ holds if and only if 
$\frac{\deg(F)}{2}-c(F)=0$
(i.e.,  $\deg(F)/2=c(F)$)
for each $F\in\mathcal F_{\ge4}(G^\times)$.

By definition,
$
C=\sum_{F\in\mathcal F_{\ge4}(G^\times)}\bigl(\deg(F)-4\bigr).
$
Since $\deg(F)\ge 4$ for every $F\in\mathcal F_{\ge4}(G^\times)$, we have $C\ge0$.
 Furthermore, $C=0$ if and only if $\deg(F)-4=0$ 
 for each $F\in \F_{\ge4}(G^\times)$, 
 that is, every $4^+$-face $F$ is a 4-face.
\end{proof}


\begin{lem}[Bekos et al. \cite{Bekos2025}]\label{lem:fake-no-adjacent-triangles}
Let $G\in \Maxe{n}{3}$
and let 
$z$ be a fake vertex in $G^\times$.
Then there are at most two $3$-faces in $G^\times$ 
incident with $z$, 
and if two occur then the two $3$-faces  are opposite around $z$, as shown in Figure \ref{fig:fakevertexfaces}.
\end{lem}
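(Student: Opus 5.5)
The plan is to study the four faces of $G^\times$ around $z$ and to show that any $3$-face incident with $z$ produces an edge of $G$ joining two true vertices. Then two such faces in bad position will close a triangle in $G$, which is forbidden.

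Let $z$ be the crossing of the edges $uv$ and $xy$ of $G$. In $G^\times$, the vertex $z$ has exactly four neighbours, all true. In rotation order they are $u,x,v,y$, so the four edges $zu,zx,zv,zy$ appear cyclically around $z$. This gives four angular sectors at $z$, namely $(u,x)$, $(x,v)$, $(v,y)$ and $(y,u)$. Each sector lies in exactly one face of $G^\times$. If a face is incident with $z$ several times, each occurrence of $z$ on its facial walk corresponds to a different sector.

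\textbf{Key step.} Suppose the face in the sector $(u,x)$ is a $3$-face. Its facial walk has length $3$ and contains the consecutive edges $uz$ and $zx$. So the walk is $u\,z\,x\,u$, and its third edge joins $x$ and $u$. By Lemma~\ref{lem:fakevertices}(i) this third edge cannot be incident with another fake vertex, so it is a true edge. Hence $ux\in E(G)$.

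We also need $u\neq x$. This holds because the drawing is good, so adjacent edges do not cross and the four endpoints $u,v,x,y$ are distinct. Applying the same argument to every sector, a $3$-face in the sector between consecutive neighbours $p,q$ of $z$ forces $pq\in E(G)$.

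\textbf{Conclusion.} Any two of the four sectors are either adjacent or opposite around $z$. Two adjacent sectors share one of the edges at $z$; for example, $(u,x)$ and $(x,v)$ share $zx$. If both carried $3$-faces, we would get $ux,xv\in E(G)$. Together with the edge $uv\in E(G)$, this gives the triangle $uxv$ in $G$, contradicting $K_3$-freeness.

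Therefore the sectors carrying $3$-faces are pairwise non-adjacent. Among four sectors arranged in a cycle, this allows at most two, and if there are two they are opposite, i.e.\ $(u,x),(v,y)$ or $(x,v),(y,u)$. These two faces meet only at $z$. They cannot be the same face, since each is a triangle whose walk passes through $z$ only once.

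\textbf{Main obstacle.} The main care point is degenerate facial walks. A $3$-face could a priori pass through $z$ in two sectors, and in the key step we must rule out the third vertex of the walk being fake or repeated. Both issues are handled by Lemma~\ref{lem:fakevertices}: part (i) forbids adjacent fake vertices, and part (ii) says a $3$-face contains at most one fake-vertex occurrence. Once these degeneracies are excluded, the face-to-edge correspondence is clean and the rest is immediate.
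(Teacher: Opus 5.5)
Your proof is correct. The paper only quotes this lemma from Bekos et al.\ without proof, but your argument is the same sector-by-sector reasoning the paper uses for Lemma~\ref{obs:A-nonneg_K4}: a $3$-face in the sector between consecutive neighbours $p,q$ of $z$ forces $pq\in E(G)$, and two adjacent such sectors together with a crossing edge close a $K_3$. One step is superfluous: the appeal to Lemma~\ref{lem:fakevertices}(i), since the third edge $xu$ of the walk $u\,z\,x\,u$ joins two true vertices and is therefore automatically a true edge.
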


\tikzstyle{dashpure1}=[-, dashed, draw={rgb,255: red,128; green,0; blue,128}]
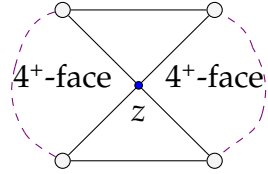
\begin{figure}[H]
\centering
\begin{tikzpicture}[scale=0.5]
	\begin{pgfonlayer}{nodelayer}
		\node [style=whitenode] (0) at (-2, 2) {};
		\node [style=whitenode] (1) at (2, 2) {};
		\node [style=whitenode] (2) at (-2, -2) {};
		\node [style=whitenode] (3) at (2, -2) {};
		\node [style=none] (4) at (0, -0.75) {$z$};
		\node [style=none] (5) at (-3.25, 0.5) {};
		\node [style=none] (6) at (-3, -1.25) {};
		\node [style=none] (7) at (3, 1) {};
		\node [style=none] (8) at (3.25, -0.75) {};
		\node [style=none] (9) at (-2, 0.25) {$4^+$-face};
		\node [style=none] (10) at (2, 0.25) {$4^+$-face};
		\node [style=bluenode] (11) at (0, 0) {};
	\end{pgfonlayer}
	\begin{pgfonlayer}{edgelayer}
		\draw (0) to (3);
		\draw (1) to (2);
		\draw (0) to (1);
		\draw (2) to (3);
		\draw [style=dashpure1, in=75, out=-165] (0) to (5.center);
		\draw [style=dashpure1, in=120, out=-105] (5.center) to (6.center);
		\draw [style=dashpure1, in=150, out=-60, looseness=0.75] (6.center) to (2);
		\draw [style=dashpure1, in=45, out=-105] (8.center) to (3);
		\draw [style=dashpure1, in=-60, out=75] (8.center) to (7.center);
		\draw [style=dashpure1, in=330, out=105] (7.center) to (1);
	\end{pgfonlayer}
\end{tikzpicture}

\caption{A fake vertex $z$ incident with exactly two opposite  $3$-faces.}
\label{fig:fakevertexfaces}
\end{figure}

\begin{lem}\label{obs:A-nonneg}
Let $G\in \Maxe{n}{3}$,
where $n\ge 4$.
Then $A\ge 0$,
where equality holds if and only if every fake vertex $z$ in $G^\times$
is incident with exactly  two $3$-faces,  
and these two $3$-faces are  opposite, shown in Figure \ref{fig:fakevertexfaces}.
\end{lem}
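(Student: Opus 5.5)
The plan is to show, for each fake vertex $z$ separately, that $a(z)\ge 2$, with equality exactly in the configuration of Figure~\ref{fig:fakevertexfaces}. Summing over $z\in V_{\mathrm{fake}}(G^\times)$ then gives $A\ge 0$, and $A=0$ holds precisely when every fake vertex attains equality.

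First I will record the local structure at $z$. Let $z$ be the crossing of the edges $uv$ and $xy$. In $G^\times$, $z$ has degree $4$, and its neighbours in cyclic order are $u,x,v,y$; these are true vertices by Lemma~\ref{lem:fakevertices}(i). Since $n\ge 4$, $G^\times$ has at least three vertices. Every face of $G^\times$ incident with $z$ therefore has degree at least $3$, as in Observation~\ref{obs_no1or2faces}, applied locally or to the component containing $z$.

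Next I will check that $\eta(z,\mathcal F(G^\times))=4$ by Observation~\ref{ob:vertex_mul_number}. These four incidences correspond to the four angular sectors at $z$, namely $(u,x)$, $(x,v)$, $(v,y)$ and $(y,u)$. Hence
\[
a(z)=4-\eta(z,\mathcal F_3(G^\times)).
\]
By Lemma~\ref{lem:fake-no-adjacent-triangles}, at most two of these incidences are with $3$-faces. It follows that $a(z)\ge 2$, and therefore $A=\sum_z (a(z)-2)\ge 0$.

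For the equality case, $A=0$ forces $a(z)=2$ for every $z$. This means each fake vertex has exactly two incidences with $3$-faces, and by Lemma~\ref{lem:fake-no-adjacent-triangles} these two $3$-faces are opposite around $z$. Conversely, if every fake vertex is incident with exactly two opposite $3$-faces, then $a(z)=2$ for all $z$, and so $A=0$.

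The main subtlety is making sure the incidence multiplicities are counted correctly, since a single $3$-face could in principle meet $z$ twice. A triangle face through $z$ has the form $z,u,x$, using one sector, and its boundary walk has length $3$. It therefore contains $z$ at most once. So "two $3$-face incidences" really does mean two distinct $3$-faces, which is exactly what Lemma~\ref{lem:fake-no-adjacent-triangles} controls.
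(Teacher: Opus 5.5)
Your proof is correct and follows the paper's argument: you bound $\eta(z,\mathcal F_3(G^\times))\le 2$ via Lemma~\ref{lem:fake-no-adjacent-triangles} to get $a(z)\ge 2$, sum over fake vertices, and read off the equality case term by term. Your extra checks (that no face at $z$ has degree below $3$, and that a $3$-face meets $z$ at most once) make explicit two points the paper uses without comment.
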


\begin{proof}
Fix a fake vertex $z$. By Lemma~\ref{lem:fake-no-adjacent-triangles}, $\eta(z, \mathcal F_3(G^\times))\le 2$, and thus by Observation \ref{ob:vertex_mul_number}, $a(z)\ge 2$. By definition, $A\ge 0$.
Moreover, $A=0$ holds if and only if $a(z)=2$ for every fake vertex $z$. Then $\eta(z, \mathcal F_3(G^\times))=2$. Furthermore, it is clear that any fake vertex is incident with at most two 3-faces, thus any fake vertex is incident with exactly  two $3$-faces. 
By Lemma~\ref{lem:fake-no-adjacent-triangles}, these two $3$-faces are opposite.
\end{proof}

By Lemmas~\ref{obs:pB-nonneg} 
and~\ref{obs:A-nonneg}, 
Bekos et al. \cite{Bekos2025}'s 
conclusion follows directly.

\begin{cor}\label{cor-3-3}
$\maxe{n}{3}\le 3n-6$ for $n\ge 4$.
\end{cor}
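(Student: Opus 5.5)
The plan is to apply the edge-count formula (Theorem~\ref{lem:maintool}) to a suitable connected drawing and show that every correction term is non-negative. Let $G\in\Maxe{n}{3}$ with $n\ge 4$, and fix a $1$-planar drawing of $G$.

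\emph{Reduction to the connected case.} We may add edges to make $G$ connected without creating a triangle or destroying $1$-planarity. If $G$ has components $G_1,\dots,G_r$ with $r\ge 2$, we place their drawings in disjoint regions of the plane. We then join a vertex of $G_i$ lying on its outer face to a vertex of $G_{i+1}$ lying on its outer face, with an uncrossed edge. Such an edge is a bridge, so it lies on no cycle and in particular creates no $K_3$. Edge counts only increase, so it suffices to bound connected graphs; the connected graph obtained still has order $n\ge 4\ge 3$.

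\emph{Applying the formula.} For connected $G$, Theorem~\ref{lem:maintool} gives
\[
e(G)=3n-6-\frac{A+B}{2}-\frac34 C .
\]
By Lemma~\ref{obs:pB-nonneg} we have $B\ge 0$ and $C\ge 0$. Since $G$ is $K_3$-free, Lemma~\ref{obs:A-nonneg} gives $A\ge 0$. Hence $e(G)\le 3n-6$ for every $K_3$-free $1$-plane graph of order $n\ge 4$. Taking the maximum over $G$ yields $\maxe{n}{3}\le 3n-6$.

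\emph{Main point of care.} The only subtlety is the connectivity hypothesis of Theorem~\ref{lem:maintool}, together with checking that Lemma~\ref{obs:A-nonneg} still applies after the augmentation. The augmented graph remains $K_3$-free and $1$-plane, and Lemma~\ref{lem:fake-no-adjacent-triangles} is a local statement about the faces around each fake vertex, so it is unaffected. The alternative of summing the bound over components fails: a component $H$ with $v(H)\le 3$ is $K_3$-free with $e(H)\le v(H)-1$, which can exceed $3v(H)-6$. The augmentation argument avoids this issue entirely.
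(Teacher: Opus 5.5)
Your proof is correct and is essentially the paper's argument: the edge-count formula (Theorem~\ref{lem:maintool}) combined with $A\ge 0$, $B\ge 0$ and $C\ge 0$ from Lemmas~\ref{obs:A-nonneg} and~\ref{obs:pB-nonneg}. Your bridge-augmentation step makes explicit the connectivity hypothesis of Theorem~\ref{lem:maintool}, which the paper leaves implicit. One correction: summing over components does not actually fail. If components of order at most $3$ are bounded by $v(H)-1$ and the remaining ones by $3v(H)-6$, each component loses at least $3$ relative to $3v(H)$, which is the computation in Lemma~\ref{lem:con}.
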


\begin{lem}\label{obs:A-nonneg_K4}
For any $G\in \Maxe{n}{4}$,
where $n\ge 3$, 
$A\ge - |V_{\mathrm{fake}}(G^\times)|$.

\end{lem}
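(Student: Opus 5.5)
Since $A=\sum_{z}(a(z)-2)$ over the fake vertices $z$ of $G^\times$, the inequality $A\ge -|V_{\mathrm{fake}}(G^\times)|$ follows once we show $a(z)\ge 1$ for every fake vertex $z$. Because $a(z)=4-\eta(z,\mathcal F_3(G^\times))$, this is the same as showing that no fake vertex is incident with four $3$-faces of $G^\times$. The plan is to assume that some fake vertex $z$ has all four incident faces triangular and to find a $K_4$ in $G$, contradicting $G\in\Maxe{n}{4}$.

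Let $z$ be the crossing of the edges $uv$ and $xy$ of $G$. In $G^\times$, the neighbours of $z$ are $u,x,v,y$ in cyclic order, and the four faces around $z$ are bounded by the angles $uzx$, $xzv$, $vzy$ and $yzu$. By Lemma~\ref{lem:fakevertices}(i), each of $u,x,v,y$ is a true vertex. Suppose the face in the angle $uzx$ is a $3$-face. Its facial walk is then $u,z,x$, so $u$ and $x$ are joined by an edge $ux$ of $G^\times$. Since $ux$ joins two true vertices, it is a true edge, and hence an uncrossed edge $ux$ of $G$. (The facial walk has length $3$ and both endpoints other than $z$ are true, so the third edge cannot pass through another fake vertex.) The same argument applied to the other three angles gives uncrossed edges $xv$, $vy$ and $yu$ in $G$.

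Together with the crossing edges $uv$ and $xy$, this gives all six edges among $\{u,v,x,y\}$. These four vertices are distinct, since the drawing is good and adjacent edges do not cross. So $G[\{u,v,x,y\}]\cong K_4$, a contradiction. Hence $\eta(z,\mathcal F_3(G^\times))\le 3$, so $a(z)\ge 1$ for every fake vertex $z$, and summing gives $A\ge\sum_z(1-2)=-|V_{\mathrm{fake}}(G^\times)|$.

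The main point needing care is the claim that a $3$-face at $z$ genuinely yields the edge $ux$. One must rule out degenerate facial walks, for example a $3$-face whose boundary repeats a vertex. Simplicity of $G$, the goodness of the drawing, and the fact (Observation~\ref{obs_no1or2faces}) that $G^\times$ has no faces of degree $1$ or $2$ should handle this, provided $G^\times$ is connected with at least $3$ vertices. If $G$ is disconnected, the argument is local to $z$ and the face boundaries near $z$ lie in the component of $z$, so it still applies.
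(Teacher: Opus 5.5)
Your proposal is correct and matches the paper's proof. Like the paper, you show $a(z)\ge 1$ by noting that four triangular faces around the crossing of $uv$ and $xy$ force the edges $ux, xv, vy, yu$, which together with $uv$ and $xy$ give a $K_4$, and then sum $a(z)-2\ge -1$ over the fake vertices. Your extra care about degenerate facial walks and the distinctness of $u,x,v,y$ only makes explicit what the paper leaves implicit.
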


\begin{proof} 
We first prove that 
 $a(z)\ge 1$ for every fake vertex $z$ of $G^\times$.
 
 Clearly, $a(z)\ge 0$.
 If \(a(z)=0\), then all faces incident with \(z\) are triangles.  
 We may assume $z$ is the crossing of $uv$ and $xy$ in $G$. The four triangular faces around \(z\)  force the cyclically consecutive pairs among \(u,x,v,y\) to be adjacent, and together with the crossing edges \(uv\) and \(xy\), this shows that \(u,x,v,y\) induce a \(K_4\), a contradiction.

Thus, $a(z)-2\ge -1$ for each fake vertex $z$.
By definition, 
$
A=\sum_{z\in V_{\mathrm{fake}}(G^\times)}
(a(z)-2)\ge -
|V_{\mathrm{fake}}(G^\times)|.
$
\end{proof}

\section{Preparation for proving Theorem \ref{thm:K3}}

Throughout this section, 
we assume that $G$ is a connected 1-plane (simple) graph of order $n(G)\ge 4$. 
Let $G^\times$ be the planarization  of $G$. Clearly, $G^\times$ is a connected plane graph of order 
$n(G^\times)\ge 4$.

\subsection{Types of small faces in $G^\times$}

A $k$-face $f$ in $G^\times$ is called \emph{alternating}
if true and fake vertices alternate in order along its boundary walk.
Now let 
$F$ be a $4$-face of $G^\times$. 
By Lemma~\ref{lem:fakevertices}(ii), we have
$
c(F)\le 2,
$
and hence $c(F)\in\{0,1,2\}$.
Moreover, by Lemma~\ref{lem:fakevertices}(i), no two fake vertices of $G^\times$ are adjacent, so
if $c(F)=2$ then the two fake vertices cannot be consecutive on the boundary walk of $F$.
Therefore they must be opposite on the boundary.
Accordingly, we classify all 4-faces of $G^\times$ into the following three types (see Figure~\ref{fig:Alpossible4-face}):
\begin{itemize}
\item[(i)] a \emph{true $4$-face} if $c(F)=0$, i.e., the boundary of $F$ consists of four true vertices.
\item[(ii)] a \emph{single-fake $4$-face} if $c(F)=1$, i.e., the boundary of $F$ contains exactly one fake vertex and three true vertices.
\item[(iii)] an \emph{alternating $4$-face} if $c(F)=2$; i.e., the boundary of $F$ alternates between true and fake vertices.
\end{itemize}

\begin{remark}
	\label{re:notrue}
The boundary  of an alternating $4$-face contains no true edge.
\end{remark}

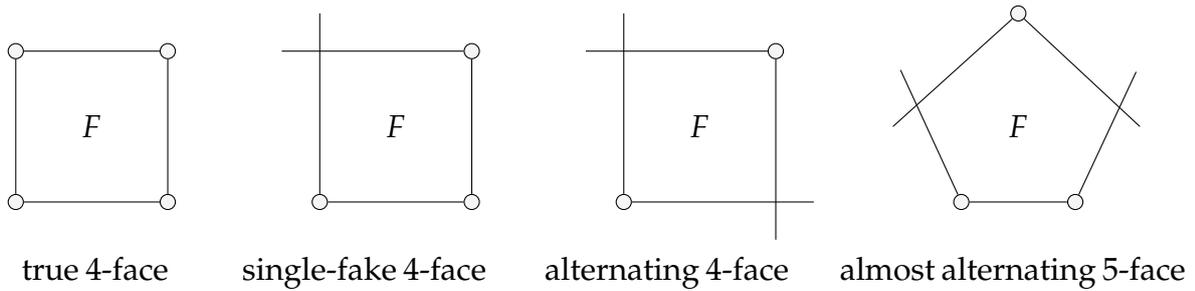
\begin{figure}[H]
	

\begin{minipage}[t]{0.64\textwidth}
\begin{tikzpicture}
	\begin{pgfonlayer}{nodelayer}
		\node [style=whitenode] (0) at (-6, 2) {};
		\node [style=whitenode] (2) at (-6, 0) {};
		\node [style=whitenode] (1) at (-4, 2) {};
		\node [style=whitenode] (3) at (-4, 0) {};
	\node [style=none] (23) at (-5, 1) {$F$};
		
	
		\node [style=whitenode] (4) at (-2, 0) {};
		\node [style=whitenode] (5) at (0, 2) {};
		\node [style=none] (6) 
		at (-2.5, 2) {};
		\node [style=none] (7) 
		at (-2, 2.5) {};
		\node [style=whitenode] (13) at (0, 0) {};
		\node [style=none] (24) at (-1, 1) {$F$};

		\node [style=whitenode] (14) at (2, 0) {};
		\node [style=whitenode] (15) at (4, 2) {};
		\node [style=none] (16) at (1.5, 2) {};
		\node [style=none] (17) at (2, 2.5) {};
		\node [style=none] (18) at (4, -0.5) {};
		\node [style=none] (19) at (4.5, 0) {};
		\node [style=none] (25) at (3, 1) {$F$};

	\end{pgfonlayer}
	\begin{pgfonlayer}{edgelayer}
		\draw (6.center) to (5);
		\draw (7.center) to (4);
		\draw (0) to (1);
		\draw (1) to (3);
		\draw (3) to (2);
		\draw (0) to (2);
		\draw (5) to (13);
		\draw (13) to (4);
		\draw (16.center) to (15);
		\draw (17.center) to (14);
		\draw (14) to (19.center);
		\draw (18.center) to (15);
	\end{pgfonlayer}
\end{tikzpicture}
\end{minipage}
\hspace{1.5 cm}
\begin{minipage}[t]{0.2\textwidth}
\begin{tikzpicture}
	\begin{pgfonlayer}{nodelayer}
		\node [style=whitenode] (0) at (0, 2.5) {};
		\node [style=none] (1) at (-1.55, 1.75) {};
		\node [style=none] (5) at (-1.65, 1) {};
		\node [style=whitenode] (2) at (-0.75, 0) {};
		\node [style=whitenode] (3) at (0.75, 0) {};
		\node [style=none] (6) at (1.55, 1.725) {};
		\node [style=none] (7) at (1.6, 1) {};
		\node [style=none] (8) at (0, 1) {$F$};
		\node [style=none] (9) at (0, -0.5) {};
	\end{pgfonlayer}
	\begin{pgfonlayer}{edgelayer}
		\draw (1.center) to (2);
		\draw (2) to (3);
		\draw (0) to (5.center);
		\draw (6.center) to (3);
		\draw (7.center) to (0);
	\end{pgfonlayer}
\end{tikzpicture}

\end{minipage}

{}
\hspace{0.1 cm}
true $4$-face 
\hspace{8 mm}
single-fake $4$-face 
\hspace{6 mm}
alternating $4$-face 
\hspace{5 mm}
almost alternating $5$-face

\caption{Possible types of 4-faces and almost alternating 
$5$-face}
\label{fig:Alpossible4-face}
\end{figure}

Now consider a $5$-face $F$.
By Lemma~\ref{lem:fakevertices},
the boundary of $F$ contains at most two fake vertices. 
If the boundary of $F$
contains exactly two fake vertices,
then $F$ must be as shown in
Figure~\ref{fig:Alpossible4-face},
and we call such a 5-face an \emph{almost alternating 5-face}.

The  \emph{crossing skeleton} of $G$, denoted by $\mathcal{C}(G)$,
is defined to be the spanning subgraph of $G$ whose 
edge set consists of all crossed edges of $G$.

\begin{obs}\label{obs:merge3-faces}
If a true edge $e=uv$ in $G^\times$ is incident with two fake $3$-faces, then in $\mathcal{C}(G)^\times$ these two fake 3-faces merge into an alternating $4$-face.
\end{obs}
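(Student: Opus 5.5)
The plan is to track what happens to the two fake $3$-faces when all uncrossed edges of $G$ are deleted, in particular the edge $e$ itself. Let the two fake $3$-faces incident with $e=uv$ be $F_1$ and $F_2$, lying on opposite sides of $e$ in $G^\times$. By Lemma~\ref{lem:fakevertices}(i), fake vertices are pairwise non-adjacent, so a fake $3$-face contains exactly one fake vertex. Hence $\partial(F_1)=u\,v\,z_1$ and $\partial(F_2)=u\,v\,z_2$ for fake vertices $z_1,z_2$, where $uz_1,vz_1,uz_2,vz_2$ are fake edges.

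First I identify which edges of $G$ these fake edges come from. The fake edge $uz_1$ is a half of a crossed edge of $G$ through $z_1$, and so is $vz_1$. These two halves belong to the two different crossing edges at $z_1$, since one edge passes through $z_1$ only once and an edge cannot join $u$ to $u$. So each is a segment of an edge of $\mathcal{C}(G)$, and both survive in $\mathcal{C}(G)^\times$. The same holds for $uz_2$ and $vz_2$. I also check that $z_1\neq z_2$. If they were equal, the closed curve $u z_1 v$ together with $e$ would bound both faces, forcing $G^\times$ to be the triangle alone, which contradicts the presence of the four edges at a fake vertex.

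The key observation is that $e$ is a true edge, so it is uncrossed and does not belong to $\mathcal{C}(G)$. Deleting it from $G^\times$ merges $F_1$ and $F_2$ into a single region. The facial walk of that region is $u\,z_1\,v\,z_2$, which alternates true and fake vertices, so it is an alternating $4$-face.

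The step I expect to be the main obstacle is justifying that deleting the other uncrossed edges, which is needed to pass from $G^\times$ to $\mathcal{C}(G)^\times$, does not disturb this region. The point is that the boundary edges $uz_1,z_1v,vz_2,z_2u$ are all kept, so the closed curve they form still bounds a region. The only part of the drawing that lay inside that curve was $e$ together with the interiors of $F_1$ and $F_2$, and both interiors are empty. Therefore, after deleting all uncrossed edges, the union of $F_1$, $F_2$ and the interior of $e$ is exactly one face of $\mathcal{C}(G)^\times$, with boundary walk $u z_1 v z_2$. This is an alternating $4$-face, as claimed.
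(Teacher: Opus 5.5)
Your argument is correct and is the reasoning the paper leaves implicit; the paper states this as an observation without proof. The idea is that $e$ is uncrossed and so disappears in $\mathcal{C}(G)^\times$, while the fake edges $uz_1,z_1v,vz_2,z_2u$ survive and bound the union of the two triangles and the interior of $e$. One small correction: your stated reason why $uz_1$ and $vz_1$ lie on different crossing edges does not work. The actual reason is that otherwise the crossed edge through $z_1$ would be a second edge joining $u$ and $v$, contradicting simplicity. This fact is not needed anyway, since you only use that both are segments of crossed edges.
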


\begin{lem}\label{lem:crossingskeleton_alternating}
 If every face of $G^\times$ is
 either a fake 3-face or an  alternating 4-face, 
 then  every  face of $\mathcal{C}(G)^\times$ is an alternating 4-face. 

\end{lem}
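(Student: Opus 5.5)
Under the hypothesis, the faces of $G^\times$ split into fake $3$-faces and alternating $4$-faces, and by Remark~\ref{re:notrue} every true edge of $G^\times$ lies only on fake $3$-faces. The plan is to show that deleting all uncrossed edges of $G$ merges the fake $3$-faces in pairs across those edges, while alternating $4$-faces are left intact. $\mathcal{C}(G)^\times$ is exactly $G^\times$ with its true edges removed; the fake vertices and fake edges are unchanged, since the crossed edges of $G$ are precisely the edges subdivided in $G^\times$.

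First I classify the fake $3$-faces. By Lemma~\ref{lem:fakevertices}(i) no two fake vertices are adjacent, and by (ii) a $3$-face carries at most $\lfloor 3/2\rfloor=1$ fake vertex occurrence. So every fake $3$-face has boundary $z,u,v$, with $z$ fake and $u,v$ true. Its boundary therefore contains exactly one true edge $uv$ and two fake edges $zu,zv$.

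Next I take a true edge $e=uv$ of $G^\times$. Its two sides are incident with faces, and neither can be an alternating $4$-face, since such a face has no true edges. Hence both sides of $e$ are fake $3$-faces.

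I must rule out that the two sides are the same face, i.e.\ that $e$ is a bridge-like edge appearing twice on one facial walk. This cannot happen: a $3$-face whose walk uses $e$ twice would have length at least $4$ counting the fake edges. So $e$ borders two distinct fake $3$-faces $zuv$ and $z'uv$, and by Observation~\ref{obs:merge3-faces} they merge into an alternating $4$-face $z,u,z',v$ in $\mathcal{C}(G)^\times$.

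Since each fake $3$-face has exactly one true edge, the fake $3$-faces are partitioned into pairs, one pair per true edge. Deleting all true edges, which are pairwise distinct, therefore performs these merges independently. No face is merged twice, because a merged face consists only of fake edges and so is not adjacent to any further deleted edge. The alternating $4$-faces of $G^\times$ have no true edge, so they persist unchanged as faces of $\mathcal{C}(G)^\times$. Consequently every face of $\mathcal{C}(G)^\times$ is either an original alternating $4$-face or a merged alternating $4$-face.

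The main obstacle is the rigor of this face-merging. One must check that the merged region is a genuine $4$-face with facial walk $z u z' v$, which requires $z\neq z'$; if $z=z'$ the walk degenerates. To handle this I would argue that $z=z'$ forces the edges $zu$ and $zv$ to bound both triangles, so $u$ and $v$ would each have degree $2$ within a tiny component and the two triangles together would enclose the whole plane. Then $z$ would have degree $2$, contradicting its degree $4$ in $G^\times$. A similar degree-$4$ argument excludes degenerate repeated-vertex walks. I would also note that $\mathcal{C}(G)^\times$ may be disconnected if $G$ has isolated crossing-free parts; the hypothesis avoids this, because each true vertex lies on some fake face and is therefore incident with a fake edge.
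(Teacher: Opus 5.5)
Your proposal is correct and follows the same route as the paper: alternating $4$-faces of $G^\times$ survive unchanged in $\mathcal{C}(G)^\times$, and each true edge lies on two fake $3$-faces that merge into an alternating $4$-face by Observation~\ref{obs:merge3-faces}. You make explicit the degeneracy checks that the paper's three-line proof leaves implicit: distinct faces on the two sides of $uv$, $z\neq z'$, and no face merged twice. Your closing remark on connectivity is not needed, since the merging argument already shows that every face of $\mathcal{C}(G)^\times$ is bounded by a single $4$-cycle.
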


 \begin{proof}
Clearly, every alternating 4-face of $G^\times$ is still an alternating $4$-face of  $\mathcal{C}(G)^\times$.  
By the given condition, 
every true edge $uv$ is incident with fake faces. 
By Observation \ref{obs:merge3-faces}, every 4-face of \(\mathcal{C}(G)^\times\)  is alternating.
 \end{proof}

A vertex $v$ of $G^\times$ is called \emph{alternating} if every face of $G^\times$ incident with $v$ is an alternating $4$-face.
A vertex $v\in V(G)$ is called \emph{alternating} if it is alternating in $G^\times$.
In particular, an alternating  vertex of degree $k$ is called an \emph{alternating $k$-vertex}.

\begin{lem}
\label{lem:all-2fake-quadrangulation}
Any $1$-plane graph $G$ 
does not have any alternating $k$-vertex for $0\le k\le 2$.
\end{lem}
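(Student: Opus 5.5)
Suppose $v$ is an alternating $k$-vertex with $0\le k\le 2$; we derive a contradiction in each case, using only the local structure of $G^\times$ around $v$ and Lemma~\ref{lem:fakevertices}.

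\emph{Case $k=0$.} An isolated vertex lies in no face boundary in the usual sense. Either alternation is vacuous here and we must show the definition excludes it, or we note that the face containing $v$ has a facial walk consisting of $v$ alone. Since $v$ is then incident with a face that is not an alternating $4$-face, $v$ is not alternating. (If $G$ is assumed connected with $n\ge 4$, as in this section, then $k=0$ cannot occur at all.)

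\emph{Case $k=1$.} Let $vw$ be the unique edge at $v$ in $G$. The vertex $v$ has degree $1$ in $G^\times$ as well, and it is incident with exactly one face $F$, which it meets once. The facial walk of $F$ traverses the edge of $G^\times$ at $v$ into $v$ and immediately back out, so the neighbour $y$ of $v$ in $G^\times$ (a fake vertex if $vw$ is crossed, otherwise $w$) appears on both sides of $v$. In an alternating $4$-face $t_1z_1t_2z_2$, the two neighbours of a true vertex $t_1$ on the walk are fake vertices. They could coincide only if $z_1=z_2$, making the walk $t_1 z t_2 z$. Then $z$ would have degree at most $2$ in $G^\times$, contradicting $\deg z=4$ for a fake vertex; alternatively, the face would be bounded by the two edges $zt_1$ and $zt_2$ only, and the other two edges at $z$ would have to be enclosed, which is impossible. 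So $F$ is not an alternating $4$-face.

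\emph{Case $k=2$.} Now $v$ has exactly two incident edges in $G^\times$. Both must lead to fake vertices $z_1,z_2$, since every face at $v$ is alternating and, by Remark~\ref{re:notrue}, contains no true edge. The two faces on either side of the path $z_1vz_2$ are alternating $4$-faces $z_1vz_2t$ and $z_1vz_2t'$. If $t=t'$ (the face counted twice), $v$ would be incident to a single face. Otherwise, at $z_1$ the edge $vz_1$ is a half of a crossed edge $vu_1$, and $z_1$ has four incident half-edges in the cyclic order $v, a, u_1, b$. The two faces containing $vz_1$ use the half-edges $z_1a$ and $z_1b$ respectively, so $t=a$ and $t'=b$ are the two endpoints of the edge crossing $vu_1$ at $z_1$. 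Similarly, at $z_2$ we get that $t$ and $t'$ are the endpoints of the edge crossing $vu_2$. Hence the edge $tt'$ crosses both $vu_1$ and $vu_2$. If $z_1\ne z_2$, then $tt'$ is crossed twice, violating $1$-planarity. If $z_1=z_2$, the vertex $v$ would be joined to the same fake vertex twice; this is impossible in a simple planarization since the two edges at $v$ are distinct halves of distinct edges, and two adjacent edges cannot cross in a good drawing.

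The main obstacle is the $k=2$ bookkeeping: one must argue carefully that $t$ and $t'$ are the endpoints of the same crossing edge at $z_1$ and at $z_2$, using the rotation at a fake vertex, and must handle the degenerate walks where $t=t'$ or the face repeats. I would first draw the rotation at $z_1$ explicitly, and then conclude with the good-drawing conditions.
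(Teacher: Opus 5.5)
Your proposal is correct and is essentially the paper's argument. For $k=2$ the paper also reads off, from the two alternating $4$-faces at the degree-$2$ vertex, that the edges crossed at its two fake neighbours join the same pair of true vertices; it concludes via parallel edges where you conclude via a doubly crossed edge, which is the same fact seen through simplicity rather than $1$-planarity. Both treat $k\le 1$ as degenerate, and your $k=1$ argument is somewhat more careful than the paper's. Your aside equating $t=t'$ with ``the face counted twice'' is muddled but harmless, since $t\neq t'$ follows anyway from $t,t'$ being the two ends of the edge crossed at $z_1$.
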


\begin{proof}
First, a true vertex of degree 0 or 1 in $G^\times$ cannot be incident with an alternating $4$-face $F$, because every vertex on a $4$-face is incident with two edges of the boundary of that face $F$. Suppose that there exists a vertex $u$ of degree $2$ in $G$.
Then $u$ is incident with exactly two $4$-faces; in particular, there is a $4$-face whose boundary walk is
$u\,x\,v\,y u$ where $x$ and $y$ are fake vertices and $v$ is a true vertex. The other $4$-face incident with $u$ must be bounded by the edge $ux$, $xa$, $by$, and $yu$ where $a,b $ are true vertices (see Figure \ref{fig:confdegree2}), which forces $a=b$,
implying that 
edges $va$ and  $vb$ in $G$ 
are parallel.
Thus, it contradicts the assumption that $G$ is simple.
\end{proof}

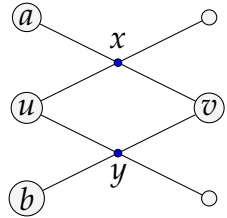
\begin{figure}[H]
\centering
\begin{tikzpicture}[scale=0.6]
	\begin{pgfonlayer}{nodelayer}
		\node [style=whitenode] (0) at (-2, 0) {$u$};
		\node [style=whitenode] (1) at (2, 0) {$v$};
		\node [style=whitenode] (2) at (-2, 2) {$a$};
		\node [style=whitenode] (3) at (2, 2) {};
		\node [style=whitenode] (4) at (-2, -2) {$b$};
		\node [style=whitenode] (5) at (2, -2) {};
		\node [style=none] (6) at (0, 1.5) {$x$};
		\node [style=none] (7) at (0, -1.5) {$y$};
		\node [style=bluenode] (8) at (0, 1) {};
		\node [style=bluenode] (9) at (0, -1) {};
	\end{pgfonlayer}
	\begin{pgfonlayer}{edgelayer}
		\draw (0) to (3);
		\draw (0) to (5);
		\draw (2) to (1);
		\draw (1) to (4);
	\end{pgfonlayer}
\end{tikzpicture}
\caption{Local configuration of $u$ of degree two where $x$ and $y$ are fake vertices}
\label{fig:confdegree2}
\end{figure}

\begin{lem}
\label{lem:degree3po}
For any  $G\in \Maxe{n}{3}$, 
if $u$ is an alternating vertex
of $G$,
then $d_G(u)\ge 4$.

\end{lem}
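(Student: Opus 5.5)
The plan is to reduce to the case $d_G(u)=3$ and then derive a triangle from the local structure around $u$. By Lemma~\ref{lem:all-2fake-quadrangulation}, no alternating vertex has degree $0$, $1$ or $2$. So it suffices to show that $G\in\Maxe{n}{3}$ has no alternating $3$-vertex. Suppose, for a contradiction, that $u$ is an alternating vertex with $d_G(u)=3$.

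First I describe the neighbourhood of $u$ in $G^\times$. Every face at $u$ is an alternating $4$-face, so both edges of $G^\times$ at $u$ on the boundary of such a face go to fake vertices. Hence every edge of $G$ at $u$ is crossed. Let the three edges be $uw_1,uw_2,uw_3$, in this rotation order around $u$, and let $uw_i$ be crossed at the fake vertex $z_i$. The three faces around $u$ are then alternating $4$-faces with boundary walks
\[
u\,z_1\,v_1\,z_2\,u,\qquad u\,z_2\,v_2\,z_3\,u,\qquad u\,z_3\,v_3\,z_1\,u,
\]
where $v_1,v_2,v_3$ are true vertices.

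The key step is to identify the edges crossing the $uw_i$, using the rotation at a fake vertex. Around $z_1$ the four half-edges alternate between the two crossing edges, in the cyclic order $u,a,w_1,b$, where $ab$ is the edge crossing $uw_1$. The two faces incident with the segment $uz_1$ are the face between the $u$- and $a$-segments and the face between the $u$- and $b$-segments. These two faces are exactly $u z_3 v_3 z_1$ and $u z_1 v_1 z_2$, and they contain the segments $z_1v_3$ and $z_1v_1$ respectively. Therefore $\{a,b\}=\{v_1,v_3\}$, so $v_1v_3\in E(G)$ is the edge crossing $uw_1$. The same argument at $z_2$ and at $z_3$ shows that $v_1v_2$ crosses $uw_2$ and $v_2v_3$ crosses $uw_3$.

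To finish, note that each $v_iv_j$ is an edge of the simple graph $G$, so $v_1,v_2,v_3$ are pairwise distinct. Thus $v_1v_2v_3$ is a triangle in $G$, contradicting $G\in\Maxe{n}{3}$. Hence $d_G(u)\ge 4$.

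The only delicate point is the rotation argument at a fake vertex, that is, checking that the two faces containing $uz_1$ really meet the two halves of the crossing edge. I would justify it directly from the fact that $z_1$ is a degree-$4$ vertex of $G^\times$ at which the two crossing edges alternate; Figure~\ref{fig:fakevertexfaces} shows this configuration.
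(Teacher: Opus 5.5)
Your proof is correct and follows essentially the same route as the paper. Both reduce to an alternating $3$-vertex via Lemma~\ref{lem:all-2fake-quadrangulation}, then use the three alternating $4$-faces around $u$ to identify the endpoints of the edges crossing $uw_1,uw_2,uw_3$, which forces a triangle. Your rotation argument at the fake vertices simply makes explicit the identifications $u_1=u_6$, $u_2=u_3$, $u_4=u_5$ that the paper reads off from its figure.
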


\begin{proof}
Suppose that $u$ is an
alternating vertex in a graph 
$G$ of $\Maxe{n}{3}$.
By Lemma~\ref
{lem:all-2fake-quadrangulation},
$d_G(u)\ge 3$.

Suppose that 
$N_G(u)=\{x,y,z\}$.
As $u$ is an alternating $3$-vertex, 
the local structure around $u$ is as shown in Figure~\ref{fig:structureofdegree3} (a),
where $F_1,F_2,$ and $F_3$ are the faces incident with $u$
each of which 
is an alternating $4$-face. 

Since $F_i$ is an alternating 4-face for $1\le i\le 3$, 
we have $u_1=u_6$, $u_2=u_3$, $u_4=u_5$,
implying that 
the subgraph of $G$ induced by $\{u_1,u_2,u_4\}$
is isomorphic to $K_3$,
as shown in Figure~\ref{fig:structureofdegree3} (b).
It contradicts the given condition that $G\in \Maxe{n}{3}$.
Hence the lemma holds. 
\end{proof}

\begin{figure}[H]
\centering
\begin{tikzpicture}[scale=0.7]
	\begin{pgfonlayer}{nodelayer}
		\node [style=whitenode] (0) at (-2.25, -0.25) {$u$};
		\node [style=whitenode] (1) at (-2.25, 1.75) {$x$};
		\node [style=whitenode] (2) at (-0.5, -2) {$y$};
		\node [style=whitenode] (6) at (-4, -2) {$z$};
		\node [style=whitenode] (7) at (-2.75, 0.75) {$u_1$};
		\node [style=whitenode] (8) at (-1.75, 0.75) {$u_2$};
		\node [style=whitenode] (9) at (-3.75, -1) {$u_6$};
		\node [style=whitenode] (10) at (-3, -1.75) {$u_5$};
		\node [style=whitenode] (11) at (-1.75, -1.75) {$u_4$};
		\node [style=whitenode] (12) at (-0.75, -1) {$u_3$};
		\node [style=none] (13) at (-1.25, 0) {$F_1$};
		\node [style=none] (14) at (-2.25, -1) {$F_2$};
		\node [style=none] (15) at (-3.25, 0) {$F_3$};
		
		
		\node [style=whitenode] (16) at (4.25, -0.25) {$u$};
		\node [style=whitenode] (17) at (4.25, 1.75) {$x$};
		\node [style=whitenode] (18) at (6, -2) {$y$};
		\node [style=whitenode] (19) at (2.5, -2) {$z$};
		\node [style=whitenode] (22) at (3.5, 0.75) {$u_1$};
		\node [style=whitenode] (24) at (4.25, -1.75) {$u_4$};
		\node [style=whitenode] (25) at (5, 0.75) {$u_2$};
	\end{pgfonlayer}
	\begin{pgfonlayer}{edgelayer}
		\draw (0) to (1);
		\draw (0) to (2);
		\draw (0) to (6);
		\draw (7) to (8);
		\draw (12) to (11);
		\draw (10) to (9);
		\draw (16) to (17);
		\draw (16) to (18);
		\draw (16) to (19);
		\draw (25) to (24);
		\draw (24) to (22);
		\draw (22) to (25);
	\end{pgfonlayer}
\end{tikzpicture}

(a) \hspace{4 cm} (b)

\caption{The local  structure of an alternating $3$-vertex $u$}
\label{fig:structureofdegree3}
\end{figure}
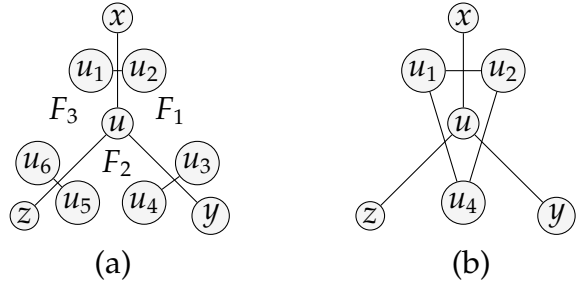

\subsection{Faces in $G^\times$
	for $G\in \Maxe{n}{3}$
}

We first prove 
an important 
fact that
 for any $G\in \Maxe{n}{3}$, 
if each face in $G^\times$ is bounded by a cycle, 
then
there are at least two faces
 in $G^\times$  
which are neither alternating $4$-faces nor fake $3$-faces.

\begin{lem}
	\label{lem:3n-7-0}
	For any $G\in \Maxe{n}{3}$, 
	$G^\times$ does not 
	contain a face $F$ which 
	is bounded by a cycle
	such that each face in $\F(G^\times)\setminus \{F\}$
	is either an alternating $4$-face or a fake $3$-face. 
	\end{lem}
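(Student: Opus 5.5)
Suppose for contradiction that $G\in\Maxe{n}{3}$ and $G^\times$ has a face $F$, bounded by a cycle, such that every other face is an alternating $4$-face or a fake $3$-face. The plan is to pass to the crossing skeleton $\mathcal{C}(G)$, where all remaining faces become alternating $4$-faces. Then we apply the degree counting of Corollary~\ref{cor:lowdegree-special}(i) to an auxiliary plane graph and clash it with Lemmas~\ref{lem:all-2fake-quadrangulation} and~\ref{lem:degree3po}.

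\textbf{Step 1: reduce to the skeleton.} Every true edge of $G^\times$ not on $\partial(F)$ has two fake $3$-faces on its sides, since alternating $4$-faces contain no true edge (Remark~\ref{re:notrue}). By Observation~\ref{obs:merge3-faces}, deleting all such true edges merges these triangles pairwise into alternating $4$-faces. The true edges on $\partial(F)$ are deleted as well; then $F$, together with the fake $3$-faces adjacent to it along true edges, merges into a single face $F'$. The result is that $\mathcal{C}(G)^\times$ has one exceptional face $F'$ and all other faces are alternating $4$-faces, as in the proof of Lemma~\ref{lem:crossingskeleton_alternating}. Let $k:=\deg(F')$. Each true edge of $\partial(F)$ is replaced by two fake half-edges, so $k$ is even and $k\ge 4$ (if $k=4$ we are essentially in the all-alternating case). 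One should check that $\mathcal{C}(G)^\times$ is still connected, since $F$ is bounded by a cycle and the merged triangles are attached along it. One should also check that $\delta\ge 2$, using Lemma~\ref{lem:all-2fake-quadrangulation} for interior vertices and the cycle structure of $\partial(F)$ for boundary vertices. Finally, delete isolated true vertices, which carry no crossed edges and so do not matter.

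\textbf{Step 2: count low-degree vertices.} Apply Corollary~\ref{cor:lowdegree-special}(i) to $H:=\mathcal{C}(G)^\times$. This gives $n_2(H)+n_3(H)\ge k/2+2$. Fake vertices have degree $4$ in $H$, so all these low-degree vertices are true vertices of $\mathcal{C}(G)$. A true vertex not on $\partial(F')$ is incident only with alternating $4$-faces of $H$. Such a vertex is then alternating in $\mathcal{C}(G)$, which is a $K_3$-free $1$-plane graph, so by Lemmas~\ref{lem:all-2fake-quadrangulation} and~\ref{lem:degree3po} (applied to $\mathcal{C}(G)$) it has degree at least $4$. Hence every vertex of degree $2$ or $3$ lies on $\partial(F')$.

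\textbf{Step 3: the key count.} The boundary of $F'$ alternates between true and fake vertices, so it has exactly $k/2$ true-vertex occurrences. Since $\partial(F)$ is a cycle, these occurrences are distinct vertices, giving at most $k/2$ low-degree vertices. This contradicts the bound $k/2+2$.

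\textbf{Main obstacle.} The delicate part is Step 1. One must verify carefully that the merge produces a single face $F'$ whose boundary is a closed walk alternating between true and fake vertices, with the true vertices of $\partial(F)$ appearing without repetition. One must also check that Lemma~\ref{lem:degree3po} really applies to interior vertices after the deletion, i.e.\ that a vertex all of whose faces in $H$ are alternating $4$-faces is alternating in $\mathcal{C}(G)$, which holds by definition. If the cycle hypothesis fails to give distinctness, I would instead count incidences $\eta(u,F')$ and use the handshake identity from the proof of Lemma~\ref{lem:exceptional-faces-lowdegree} directly.
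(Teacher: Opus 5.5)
Your route is the paper's own: pass to $\mathcal{C}(G)$, apply Corollary~\ref{cor:lowdegree-special}(i), push every vertex of degree $2$ or $3$ onto $\partial(F')$, and compare with the $k/2$ true vertices there. It breaks at the same point as the paper's proof, namely the claim $\delta(\mathcal{C}(G)^\times)\ge 2$. The paper states this as ``each face in $H^\times$ is bounded by a cycle.'' The cycle structure of $\partial(F)$ does not give it. Take $c\in\partial(F)$ with $d_G(c)=2$: one crossed edge $cd$ crossing $ab$ at $z$, and one true edge $cb$. Suppose $\partial(F)$ runs through $a\,z\,c\,b$ and the fake $3$-face $c\,z\,b$ lies on the other side of $cb$. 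This is compatible with $\partial(F)$ being a cycle and with $K_3$-freeness. But once $cb$ is deleted, $c$ is a pendant vertex of $\mathcal{C}(G)^\times$ and $z$ occurs twice on $\partial(F')$. Such a vertex contributes $3$, not at most $2$, to $\sum_v(4-d(v))=k+4$. Four of them use up exactly the slack of $4$ that your count depends on, so neither Corollary~\ref{cor:lowdegree-special}(i) nor your fallback handshake count gives a contradiction. Your worry about repeated true vertices is misplaced: the true vertices on $\partial(F')$ are exactly those of the cycle $\partial(F)$. What fails is the degree bound.

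In fact the statement is false as written. Put vertices at the points $(i,j)$, $1\le i,j\le 3$. Draw both diagonals of each of the four unit squares, crossing at the centre $z_{ij}$ of the square with lower-left corner $(i,j)$. Add the four boundary segments $(1,1)(1,2)$, $(2,1)(3,1)$, $(3,2)(3,3)$, $(1,3)(2,3)$, so every unit square has exactly one side present. The diagonals form a bipartite graph, and any triangle would need two perpendicular sides of one unit square, so $G$ is $K_3$-free. The outer face of $G^\times$ is bounded by the $12$-cycle $(1,1)\,z_{11}\,(2,1)\,(3,1)\,z_{21}\,(3,2)\,(3,3)\,z_{22}\,(2,3)\,(1,3)\,z_{12}\,(1,2)$. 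The other eight faces are four fake $3$-faces and the four alternating $4$-faces around $(2,2)$. In $\mathcal{C}(G)^\times$ the four corners have degree $1$, and $n_2+n_3=4<10=k/2+2$.

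What your argument does prove is the lemma under the extra hypothesis that no vertex of $\partial(F)$ has degree $2$ in $G$. A true vertex of $\partial(F)$ with at most one crossed edge has one corner in $F$ and otherwise only fake-triangle corners next to that crossed edge. Two such corners would lie on both sides of the same half-edge at $z$ and create a $K_3$, so the vertex has degree exactly $2$ in $G$. Under the hypothesis, therefore, $\delta(\mathcal{C}(G)^\times)\ge 2$. Connectivity is enough for the handshake identity; $2$-connectivity is not needed. Your Steps~2--3 then go through. The hypothesis holds wherever the paper uses the lemma. A vertex of degree at most $2$ can be deleted to contradict Corollary~\ref{cor-3-3} or Lemma~\ref{lem:3n-6}, and Proposition~\ref{lem:3n-7} already has $\delta(G)\ge 4$.
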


\begin{proof}
Suppose that $G$ 
is a graph in $\Maxe{n}{3}$
and 
$G^\times$  contains a
face $F$ 
which 
is bounded by a cycle
such that each face in $\F(G^\times)\setminus \{F\}$
is either an alternating $4$-face or a fake $3$-face.

Let $H=\C(G)$. 
Then,  $H^\times$ is obtained 
from $G^\times$ by removing 
all true edges in $G^\times$.
Thus $F$ is expanded into 
an alternating $2r$-face 
$F^*$ in $H^\times$
for some $r\ge 2$ after
the removal of all true edges on $\partial(F)$.

By Remark~\ref{re:notrue},
every true edge in $G^\times$
cannot be on the boundary of an alternating $4$-face. 
Since each face in $\F(G^\times)\setminus \{F\}$
is either an alternating $4$-face or a fake $3$-face, 
every true edge of $G^\times$
 which is not on $\partial(F)$ 
is incident with exactly two fake $3$-faces in $G^\times$,
and thus such two fake 
$3$-faces are  merged into an  alternating $4$-face in $H^\times$. 
Hence 
all faces in $\F(H^\times)\setminus \{F^*\}$
are  alternating 4-faces.

Since each face in $H^\times$
is bounded by a cycle, 
by Lemma \ref{lem:plane2-connected}, 
$H^\times$ is $2$-connected
and thus
$\delta(H^\times)\ge 2$.
By Corollary~\ref{cor:lowdegree-special} (i),
$n_2(H^\times) +n_3(H^\times)
\ge r+2$.
Note that there are only $r$ true vertices on $\partial(F^*)$
and each fake vertex in $H^\times$ is of degree $4$.
Thus, 
$n_2(H^\times) +n_3(H^\times)
\ge r+2$ implies that 
$H^\times$ has a vertex $p$
which is 
not on $\partial(F^*)$ with $d_{H^\times}(p)=2$ or $3$.
Clearly, $p$ is a true vertex in 
$H^\times$.

Since 
$p$ is a true vertex in $H^\times$
and $p$ is not on $\partial(F^*)$, 
$p$ must be incident 
with $4$-faces of $H^\times$ only,
implying that $p$ is 
an alternating vertex in $H^\times$.
Since $G\in \Maxe{n}{3}$,
we have $H\in \Maxe{n}{3}$.
By Lemma \ref{lem:degree3po},
$d_{H^\times}(p)=d_H(p)\ge 4$, 
contradicting the fact that 
$d_{H^\times}(p)\le 3$.
Hence the lemma holds.
\end{proof}

\begin{lem}
	\label{lem:3n-7-2}
Let $G\in \Maxe{n}{3}$
		with $e(G)=3n-7$,
		where $n\ge 4$.
Then, $G^\times$ has 
two single-fake
$4$-faces $F_1$
and $F_2$
such that 
all other $4^+$-faces of 
$G^\times$ are alternating
$4$-faces. 
Moreover, every fake vertex 
in $G^\times$
is incident with exactly two opposite fake 3-faces.

\end{lem}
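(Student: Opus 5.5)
The plan is to read off the structure of $G^\times$ from the edge-count formula (Theorem~\ref{lem:maintool}) and then use Lemma~\ref{lem:3n-7-0} to rule out every configuration except the one claimed.

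\textbf{Reduction to the connected case.} Suppose $G$ is disconnected, with components $G_1,\dots,G_s$ where $s\ge 2$ and $n_i=v(G_i)$.
\begin{itemize}
\item If $n_i\ge 4$, Corollary~\ref{cor-3-3} gives $e(G_i)\le 3n_i-6$.
\item If $n_i\le 3$, then $G_i$ is $K_3$-free, hence a forest, so $e(G_i)\le n_i-1\le 3n_i-3$. A single vertex contributes $0\le 3\cdot 1-3$.
\end{itemize}
Summing these bounds should give $e(G)<3n-7$ whenever $s\ge 2$. The only delicate situation is when most components are tiny. There we use the exact small values instead: a single vertex gives $0$ and an edge gives $1$, which fall well below $3n_i-3$. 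So we may assume $G$ is connected, and hence $G^\times$ is connected.

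\textbf{Extracting constraints from the formula.} With $m=3n-7$, Theorem~\ref{lem:maintool} gives
\[
2A+2B+3C=4 .
\]
Here $A$ and $C$ are non-negative integers (Lemmas~\ref{obs:pB-nonneg} and~\ref{obs:A-nonneg}), and $2B$ is a non-negative integer. Hence $C\in\{0,1\}$.

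\textbf{Case $C=1$.} Then $A=0$ and $B=\tfrac12$. So $G^\times$ has exactly one $5$-face, and it has $c=2$, i.e.\ it is almost alternating. All other $4^+$-faces are alternating $4$-faces. Since $G$ is $K_3$-free, every $3$-face is a fake $3$-face. I will check that a $5$-face with two non-consecutive fake vertices (Lemma~\ref{lem:fakevertices}) is bounded by a cycle, as follows. A closed walk of length $5$ that is not a cycle would split as a $3$-cycle plus a pendant edge traversed twice. Fake vertices have degree $4$ and cannot be leaves, and the pendant edge cannot accommodate the two required fake occurrences. Lemma~\ref{lem:3n-7-0} then gives a contradiction.

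\textbf{Case $C=0$.} Then $A+B=2$, every $4^+$-face is a $4$-face, and each $4$-face $F$ contributes $2-c(F)\in\{0,1,2\}$ to $B$.
\begin{itemize}
\item \emph{$B=0$.} Every face is a fake $3$-face or an alternating $4$-face. Applying Lemma~\ref{lem:3n-7-0} with $F$ any alternating $4$-face, which is a $4$-cycle, gives a contradiction.
\item \emph{$B=1$, so $A=1$.} There is exactly one single-fake $4$-face and every other $4$-face is alternating. I verify that a single-fake $4$-face is a $4$-cycle: a non-cycle walk of length $4$ would be a path or star traversed twice, and then the fake vertex would have degree at most $2$ in the walk structure, which is impossible. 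Lemma~\ref{lem:3n-7-0} again gives a contradiction.
\item \emph{$B=2$, so $A=0$.} The deficiency comes either from one true $4$-face or from two single-fake $4$-faces, with all remaining $4$-faces alternating. A single true $4$-face is a $4$-cycle (there are no pendant true edges, since a true edge on a $4$-face boundary walk traversed twice would force degree-$1$ vertices), so Lemma~\ref{lem:3n-7-0} rules it out. Thus there are exactly two single-fake $4$-faces $F_1,F_2$, and all other $4^+$-faces are alternating.
\end{itemize}
Finally, $A=0$ together with Lemma~\ref{obs:A-nonneg} gives that every fake vertex is incident with exactly two opposite $3$-faces. These are fake $3$-faces since they contain $z$.

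\textbf{Main obstacle.} The main effort is in the two side verifications. The first is that the relevant exceptional face is genuinely bounded by a cycle, so that Lemma~\ref{lem:3n-7-0} applies. The second is the disconnected-case accounting with tiny components. I would handle both by direct inspection of short closed walks and of small forests.
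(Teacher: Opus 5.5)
Your proposal follows the paper's proof essentially step for step. The edge-count formula gives $2A+2B+3C=4$, hence $C\in\{0,1\}$. The almost alternating $5$-face case, the cases $(A,B)=(2,0)$ and $(1,1)$, and the ``one true $4$-face'' subcase of $(0,2)$ are all eliminated by Lemma~\ref{lem:3n-7-0}, leaving two single-fake $4$-faces with $A=0$.

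The paper takes connectivity from the standing assumption of Section~4 and never checks the cycle hypothesis of Lemma~\ref{lem:3n-7-0}, so your side verifications go beyond it. In the $(2,0)$ case, choosing $F$ to be one of the alternating $4$-faces is a bit more direct than the paper's detour through the crossing skeleton. Such a face exists because $A=2$ forces some fake vertex $z$ with $a(z)\ge 3$.

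One of your side checks, however, is justified incorrectly. For the $5$-face, the non-cycle walk $a\,b\,c\,a\,d$ (triangle $abc$ plus pendant edge $ad$, with $d$ a leaf) \emph{can} carry two non-consecutive fake occurrences, namely when the attachment vertex $a$ is fake. Then $b,c,d$ are true and $c(F)=2$, so ``the pendant edge cannot accommodate the two required fake occurrences'' does not rule this case out.

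What does rule it out is the rotation at $a$. The facial walk enters $a$ from $c$ and leaves towards $d$, then enters from $d$ and leaves towards $b$. So $c,d,b$ are consecutive around the degree-$4$ vertex $a$, which makes $b$ and $c$ opposite at the crossing $a$. They are therefore the two ends of one crossed edge of $G$. The true edge $bc$ is then a second, uncrossed edge between them, contradicting simplicity.

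Likewise, in the true-$4$-face check, degree-$1$ vertices are not contradictory in themselves. The real point is that a facial walk $x\,y\,z\,y$ forces $x,z$ to be leaves and $d_{G^\times}(y)=2$. By connectivity this gives $G^\times\cong P_3$, so $n=3<4$.

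With these two patches your proof is complete.
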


\begin{proof}
	As $e(G)=3n-7$,
by Theorem~\ref{lem:maintool}, we have
$1=\frac{A+B}{2}+\frac34 \, C.
$
Note that $A\ge0$ by Lemma~\ref{obs:A-nonneg},
and $B\ge0$, $C\ge0$ by Lemma~\ref{obs:pB-nonneg}.
Clearly, $A$ and $C$ are integers.   
Thus $C=0$ or $C=1$.

\begin{claim}\label{cl:C=0}
$C=0$
\end{claim}

\begin{proof} 
Suppose that $C=1$.
Then, 
$1=\frac{A+B}{2}+\frac34 C$
implies that $A=0$ and $B=\frac{1}{2}$. 
By (\ref{eq:def-p}), 
$G^\times$ has exactly 
one $5$-face $F$, and all other $4^+$-faces of $G^\times$ are $4$-faces.
Furthermore, since $B=\frac{1}{2}$, 
(\ref{eq:def-B}) implies that
$F$ is almost alternating and 
all $4$-faces of $G^\times$ are alternating.
Since $G\in \Maxe{n}{3}$, 
every $3$-face  in $\F(G^\times)\setminus \{F\}$ is a fake $3$-face.
Thus, it leads to 
a contradiction with Lemma~\ref{lem:3n-7-0}.
Hence $C=0$.
\end{proof}

\begin{claim}\label{cl:A0B2}
	$(A,B)=(0,2)$.
\end{claim}

\begin{proof} 
	By Claim~\ref{cl:C=0}, 
we have $A+B=2$, 
implying that 
 $(A,B) \in \{(2,0), (1,1), (0,2)\}$.
As $C=0$, 	by (\ref{eq:def-p}),
$\deg(F)=4$ for all $F\in\mathcal F_{\ge4}(G^\times)$,
i.e., all $4^+$-faces of $G^\times$ are $4$-faces,
implying that 
every face of $G^\times$ is 
either a fake $3$-face or a
$4$-face.

Now we are going to show that 
 $(A,B)=(0,2)$.
	
If $(A,B)=(2,0)$,   then by 
	(\ref{eq:def-B}), 
$c(F)=2$
	for every $F\in\mathcal F_{\ge4}(G^\times)$.
By Lemma \ref{lem:crossingskeleton_alternating},  every 4-face of $H^\times$  is  an  alternating 4-face.

If $(A,B)=(1,1)$,  then 
by  (\ref{eq:def-B}), 
$c(F)=1$ for exactly one face $F\in\mathcal F_{4}(G^\times)$,
and $c(F')=2$
for all other faces in $\mathcal F_{4}(G^\times)$.
Thus, $F$ is the only $4$-face 
of $G^\times$
which is not alternating.

However, 
both cases above lead to 
a contradiction with Lemma~\ref{lem:3n-7-0}.
Hence $(A,B)=(0,2)$.
\end{proof}

By Claim~\ref{cl:A0B2}, 
$(A,B)=(0,2)$.
By  (\ref{eq:def-A}),
$a(z)=2$ for every fake vertex $z$, and thus Lemma \ref{lem:fake-no-adjacent-triangles} yields that 
$z$ is incident with exactly two opposite fake 3-faces.
As $B=2$, 
by  (\ref{eq:def-B}), either
\begin{itemize}
	\item [(i)] 
	$c(F)=0$ for exactly one face $F\in\mathcal F_{4}(G^\times)$,
	and $c(F')=2$
	for all faces $F'\in \mathcal F_{4}(G^\times)\setminus \{F\}$; or 

	\item [(ii)]  
	$c(F_1)=c(F_2)=1$ for exactly two faces $F_1,F_2\in\mathcal F_{4}(G^\times)$,
	and $c(F')=2$
	for all  $F'\in \mathcal F_{4}(G^\times)\setminus \{F_1,F_2\}$.
\end{itemize}
However, if case (i) happens, then each face in $\F(G^\times)\setminus \{F\}$ 
is an alternating $4$-face, 
contradicting 
Lemma~\ref{lem:3n-7-0}.
Thus, case (ii) occurs,
and the lemma holds.
	\end{proof}

\section{Proof of Theorem \ref{thm:K3}}

Corollary~\ref{cor-3-3} has shown that 
$\maxe{n}{3}\le 3n-6$.
In this section, we will 
prove Theorem~\ref{thm:K3}
by showing that 
$\maxe{n}{3}\ne 3n-6$
and $\maxe{n}{3}\ne 3n-7$.

\subsection{$\maxe{n}{3}\ne 3n-6$}

\begin{lem}
	\label{lem:con} 
	Let $G\in \Maxe{n}{3}$,
	where $n\ge 4$. 
	If $e(G)\ge 3n-8$, then $G$ is connected. 
\end{lem}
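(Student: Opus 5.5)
We argue by contradiction. Suppose $G\in\Maxe{n}{3}$ with $n\ge 4$ and $e(G)\ge 3n-8$, but $G$ is disconnected. Let $G_1,\dots,G_k$ ($k\ge 2$) be its components, with orders $n_1,\dots,n_k$ and $\sum n_i=n$. Each $G_i$ is again $K_3$-free and $1$-planar. The plan is to bound $e(G_i)$ for each component separately and sum.

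We record a per-component bound $e(G_i)\le f(n_i)$:
\begin{itemize}
\item For $n_i\ge 4$, Corollary~\ref{cor-3-3} gives $f(n_i)=3n_i-6$.
\item For small components we use direct values: $f(1)=0$, $f(2)=1$, $f(3)=2$ (a triangle-free graph on three vertices has at most two edges).
\end{itemize}
In every case $f(n_i)\le 3n_i-4$. Moreover $f(n_i)\le 3n_i-5$ whenever $n_i\ge 2$, and $f(n_i)\le 3n_i-6$ whenever $n_i\ge 4$ or $n_i=1$; the case $n_i=3$ even gives $2\le 3\cdot 3-7$.

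Summing over components gives $e(G)\le 3n-\sum_i g(n_i)$, where the deficits are $g(1)=3$, $g(2)=5$, $g(3)=7$ and $g(n_i)=6$ for $n_i\ge 4$. Every deficit is at least $3$. Hence whenever at least one component is not an isolated vertex, we get $\sum_i g(n_i)\ge 3+5=8$, and in general $\sum_i g(n_i)\ge 3k$.

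This handles almost everything, but we need the strict inequality $e(G)<3n-8$. We check which configurations give total deficit exactly $8$ and which give more.
\begin{itemize}
\item Total deficit exactly $8$ occurs only for $k=2$ with component orders $\{1,2\}$, that is, $n=3<4$, which is excluded.
\item Otherwise the deficit exceeds $8$. With $k=2$ the possible sums are $6$, $8$, $9$, $10$, $\dots$; the value $6$ needs both components to have order $1$, so $n=2$. With $k\ge 3$ the deficit is at least $9$.
\end{itemize}
So for $n\ge 4$, with the one real exception checked below, every disconnected configuration has total deficit at least $9$, giving $e(G)\le 3n-9<3n-8$, a contradiction.

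The main obstacle is this small-case bookkeeping. The most delicate configuration is two components of orders $1$ and $n-1\ge 3$. Here the deficit is $3+6=9$ when $n-1\ge 4$, and $3+7=10$ when $n-1=3$. So even this case gives deficit at least $9$ and a contradiction.

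Finally, all-isolated-vertex graphs have $0$ edges, while $3n-8>0$ for $n\ge 3$. The remaining configurations with components of order $\le 3$ have larger deficits, as computed above. Since every case yields $e(G)<3n-8$, the graph $G$ must be connected.
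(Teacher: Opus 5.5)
Your proof is correct and takes essentially the paper's route: bound each component separately (Corollary~\ref{cor-3-3} for order $\ge 4$, trivial bounds for order $\le 3$) and show that for $n\ge 4$ the summed deficit exceeds $8$. The paper organizes the final count as $8\ge 3s+3t$, which forces every component to be $K_1$ or $K_2$, whereas you enumerate the deficits directly. One cosmetic slip: your preliminary claims that $f(n_i)\le 3n_i-4$ ``in every case'' and that $f(n_i)\le 3n_i-6$ when $n_i=1$ are false, since $f(1)=0$, but you never use them; the deficit $g(1)=3$ you actually use is correct.
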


\begin{proof} 
Suppose that $G$ is disconnected, with components $G_1,\dots,G_t$ $(t\ge2)$. 
	Put $n_i:=|V(G_i)|$
	for $1\le i\le t$
	and $e_i:=e(G_i)$. 
	By the given condition, 
	for $1\le i\le t$, 
	$e_i\le n_i-1$ if $n_i\le 3$,
	and 
	by
	Corollary~\ref{cor-3-3}, 
	$e_i \le 3n_i - 6$ if $n_i\ge 4$.
	
	For $i=1,2,3$,
	let $t_i$ be the number of 
	$j$'s in $\{1,2,\cdots, t\}$ with
	$|n_j|=i$, and let $s=t-t_1-t_2$.
	Then
	\Eqnn{
		e(G) &\le &
		0+t_2+2t_3+
		\sum_{1\le i\le t\atop n_i\ge 4}(3n_i-6)\\
		&=&t_2+2t_3+3(n-t_1-2t_2-3t_3)
		-6(s-t_3)\\
		&=&3n-(6s+3t_1+5t_2+t_3)
		\le 3n-(6s+3t_1+3t_2).
	}
	Since $e(G)\ge 3n-8$, the above inequality yields that 
	$$
	8 
	\ge 6s+3(t_1+t_2)=6s+3(t-s)
	=3s+3t\ge 3s+6, 
	$$
	implying that $s=0$.
	However, 
	$s=0$ implies that
	each $G_j$ is either $K_1$ 
	or $K_2$, and thus $e(G) \le \lfloor n/2\rfloor\le 3n-9$ for $n\ge 4$, a contradiction.
	Hence $G$ is connected.
\end{proof}

\begin{lem}
	\label{lem:3n-6}

		$\maxe{n}{3}\ne 3n-6$ for $n\ge 4$.
\end{lem}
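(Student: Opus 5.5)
We argue by contradiction. Suppose $G\in\Maxe{n}{3}$ with $n\ge 4$ and $e(G)=3n-6$. By Lemma~\ref{lem:con}, $G$ is connected, so Theorem~\ref{lem:maintool} applies and gives
\[
0=\frac{A+B}{2}+\frac34\,C .
\]
Lemma~\ref{obs:A-nonneg} gives $A\ge0$, and Lemma~\ref{obs:pB-nonneg} gives $B\ge 0$ and $C\ge 0$. Hence $A=B=C=0$, and we read off the structure of $G^\times$ from the equality cases:
\begin{itemize}
\item $C=0$ means every $4^+$-face of $G^\times$ is a $4$-face.
\item $B=0$ means $c(F)=2$ for every such face, so every $4^+$-face is an alternating $4$-face.
\item Every $3$-face of $G^\times$ is fake, since a true $3$-face would be a $K_3$ in $G$.
\end{itemize}
So every face of $G^\times$ is either a fake $3$-face or an alternating $4$-face.

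We then pass to the crossing skeleton $H=\C(G)$. By Lemma~\ref{lem:crossingskeleton_alternating}, every face of $H^\times$ is an alternating $4$-face. Since $H$ is a subgraph of $G$, it is $K_3$-free, so $H\in\Maxe{n}{3}$.

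First suppose $H$ has a crossing. We want $H^\times$ connected with $\delta(H^\times)\ge 2$, so that Lemma~\ref{lem:exceptional-faces-lowdegree} (with $s=0$) applies. Two sub-steps are needed.
\begin{itemize}
\item \emph{Connectivity.} An isolated true vertex of $H$ would lie in a face that is not a $4$-face, which is impossible. More generally, $H^\times$ is connected because every face of it is a $4$-face bounded by an alternating walk; if $H^\times$ had two components, some face would have a disconnected boundary, and its degree would exceed $4$.
\item \emph{Minimum degree.} Fake vertices have degree $4$, and Lemma~\ref{lem:all-2fake-quadrangulation} excludes true vertices of degree at most $2$ whose incident faces are all alternating $4$-faces. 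Every true vertex of $H^\times$ is alternating here, so $\delta(H^\times)\ge 3$.
\end{itemize}
Lemma~\ref{lem:exceptional-faces-lowdegree} then gives $n_2(H^\times)+n_3(H^\times)\ge 4$. Fake vertices have degree $4$, so some true vertex $p$ has $d_H(p)\le 3$. But $p$ is an alternating vertex of $H\in\Maxe{n}{3}$, so Lemma~\ref{lem:degree3po} forces $d_H(p)\ge 4$, a contradiction.

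Now suppose $H$ has no crossing. Then $G$ has no fake vertices, so every face of $G^\times=G$ is a true face. But every face is a fake $3$-face or an alternating $4$-face, and both contain a fake vertex, so $G$ would have no faces at all, which is impossible for a plane graph. More directly, with no crossings all $3$-faces are true and give $K_3$'s, while $C=0$ and $B=0$ force every $4^+$-face to have $c(F)=2>0$, contradicting the absence of fake vertices.

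The main obstacle is justifying the hypotheses of Lemma~\ref{lem:exceptional-faces-lowdegree} for $H^\times$, namely connectivity and the degree bound. An alternative that avoids this is to argue as in Lemma~\ref{lem:3n-7-0} by applying it to any single alternating $4$-face $F$ of $G^\times$. Such an $F$ is bounded by a cycle, because its four boundary vertices are distinct: the two fake vertices are distinct, and the two true vertices are distinct since $G$ is simple. All other faces are fake $3$-faces or alternating $4$-faces, so Lemma~\ref{lem:3n-7-0} yields an immediate contradiction. I would use this shortcut as the primary argument. It needs an alternating $4$-face to exist. If none did, every face would be a fake $3$-face, so $a(z)=0$ for every fake vertex $z$, contradicting $A=0$ (which requires $a(z)=2$) unless there are no crossings, and that case was handled above.
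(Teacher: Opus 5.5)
Your primary (shortcut) argument is the paper's proof: the edge-count formula with $A,B,C\ge 0$ forces $A=B=C=0$, so every face of $G^\times$ is a fake $3$-face or an alternating $4$-face, which contradicts Lemma~\ref{lem:3n-7-0}. Your explicit choice of an alternating $4$-face bounded by a cycle and your treatment of the crossing-free case fill in details the paper leaves implicit, and your first argument via $\C(G)$ is just the proof of Lemma~\ref{lem:3n-7-0} specialized to this situation.
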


\begin{proof} 
	Suppose that
		$G\in \Maxe{n}{3}$ with
		$e(G)\ne 3n-6$,
		where  $n\ge4$.
		By Lemma \ref{lem:con}, $G$ is connected. 
		By Theorem~\ref{lem:maintool}, 
		$
		e(G)=3n-6-\frac{A+B}{2} -\frac34\,C.
		$
		Furthermore,   $A\ge0$ by Lemma~\ref{obs:A-nonneg}, and $B\ge0$, $C\ge0$ by Lemma~\ref{obs:pB-nonneg}.
		Hence $e(G)=3n-6$
		implies that $A=B=C=0$.
		
		As $B=C=0$,  Lemma~\ref{obs:pB-nonneg} 
		yields that every $4^+$-face of $G^\times$ is a $4$-face,
		and  for each $4$-face $F$ in $G^\times$, 
		$\deg(F)/2=c(F)$, 
		i.e., $F$ is alternating. 
		Thus, every face in $G^\times$ 
		is either a fake $3$-face
		or an alternating $4$-face,
		contradicting Lemma~\ref{lem:3n-7-0}.

		Hence the result holds. 
\end{proof}


\subsection{$\maxe{n}{3}\ne 3n-7$}

\begin{prop}
	\label{lem:3n-7}
	$\maxe{n}{3}\ne 3n-7$ for $n\ge 4$.
\end{prop}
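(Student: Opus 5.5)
The plan is to argue by contradiction. Suppose $G\in\Maxe{n}{3}$ with $e(G)=3n-7$ and $n\ge 4$. By Lemma~\ref{lem:con}, $G$ is connected, so Lemma~\ref{lem:3n-7-2} gives the following structure of $G^\times$:
\begin{itemize}
\item there are exactly two single-fake $4$-faces $F_1,F_2$;
\item every other $4^+$-face is an alternating $4$-face;
\item every $3$-face is fake, since a true $3$-face would be a $K_3$ in $G$;
\item every fake vertex lies on exactly two opposite fake $3$-faces.
\end{itemize}
As in the proof of Lemma~\ref{lem:3n-7-0}, I pass to the crossing skeleton $H=\mathcal{C}(G)$. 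It is still $K_3$-free, and $H^\times$ is obtained from $G^\times$ by deleting all true edges.

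\textbf{Step 1: faces of $H^\times$.} Write $\partial(F_i)=u_i z_i v_i w_i u_i$ with $z_i$ fake, so $F_i$ has exactly two true edges $v_iw_i$ and $w_iu_i$. Every true edge of $G^\times$ lies on no alternating $4$-face (Remark~\ref{re:notrue}). Hence a true edge not on $\partial(F_1)\cup\partial(F_2)$ separates two fake $3$-faces, and these merge into an alternating $4$-face (Observation~\ref{obs:merge3-faces}). A true edge of $F_i$ has a fake $3$-face on its other side, unless $F_1$ and $F_2$ share that edge.

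In the generic case, deleting $v_iw_i$ and $w_iu_i$ merges $F_i$ with two fake $3$-faces into the alternating $6$-face $u_i z_i v_i z_i' w_i z_i'' u_i$. So every face of $H^\times$ is an alternating $4$-face except two alternating $6$-faces $F_1^*,F_2^*$. The degenerate cases need separate treatment: $F_1$ and $F_2$ sharing a true edge (which yields a single alternating $8$-face), and repeated vertices on these boundaries. The $8$-face case should fall to the same argument via Corollary~\ref{cor:lowdegree-special}(i), in the way Lemma~\ref{lem:3n-7-0} was proved.

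\textbf{Step 2: counting low-degree vertices.} I will check that $H^\times$ is connected with $\delta(H^\times)\ge 2$. Every true vertex lies on a fake $3$-face, hence on a crossed edge. A true vertex of $H$ of degree $1$ would lie on a face traversing it with a repeated edge, which is impossible for alternating faces. Corollary~\ref{cor:lowdegree-special}(ii) then gives $n_2(H^\times)+n_3(H^\times)\ge 6$, and $\ge 7$ if $n_3\ge 1$. Fake vertices have degree $4$.

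Any true vertex of degree $2$ or $3$ not on $\partial(F_1^*)\cup\partial(F_2^*)$ is an alternating vertex of $H$. This contradicts Lemma~\ref{lem:degree3po}, since $H\in\Maxe{n}{3}$. So all low-degree vertices lie among the at most $6$ true vertices on the two $6$-faces. Hence the two boundaries have disjoint true vertices, $n_3=0$, and all six true vertices $u_i,v_i,w_i$ have degree exactly $2$ in $H$. The count $2n_2+n_3\ge s+8=12$ from Lemma~\ref{lem:exceptional-faces-lowdegree} confirms this.

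\textbf{Step 3: the final contradiction (main obstacle).} Take $w_1$, which has degree $2$ in $H$. It lies on $F_1^*$ and on one alternating $4$-face $w_1 z_1' p z_1'' w_1$. Reading off the crossing edges through $z_1'$ and $z_1''$, I expect the same "forced coincidence" argument as in Lemma~\ref{lem:all-2fake-quadrangulation} to apply. Either the far endpoints coincide, forcing parallel edges, or the vertices $v_1,p,u_1$ together with the true edges $v_1w_1,w_1u_1$ of $G$ produce a triangle in $G$. This is the step needing the most care: I would try to pin down the $G$-edges at $z_1',z_1''$ explicitly and use that $v_1,w_1,u_1$ all have degree $2$ in $H$ to close up the local picture. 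I also expect the degenerate identifications from Step 1 to be the fiddly part of the whole proof.
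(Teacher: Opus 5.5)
\textbf{The gap is in Step 3.} Your Steps 1 and 2 follow the paper's route: pass to $H=\mathcal{C}(G)$, get two alternating $6$-faces with all other faces alternating $4$-faces, then use Corollary~\ref{cor:lowdegree-special}(ii) and Lemma~\ref{lem:degree3po}. In the generic case your sharpened count is correct: the six boundary true vertices are distinct and all have degree exactly $2$ in $H^\times$.

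Step 3 fails because the degree-$2$ condition at $w_1$ forces no contradiction. Let $z_1'$ be the crossing of $v_1p$ and $w_1\beta$, and let $z_1''$ be the crossing of $u_1p'$ and $w_1\gamma$. Reading the rotations at $z_1'$ and $z_1''$, the second face at $w_1$ in $H^\times$ is $w_1z_1'pz_1''w_1$ with $p=p'$. The opposite $3$-faces at these crossings make $p\beta$ and $p\gamma$ true edges. So all you learn is that $w_1$ and $p$ share the four neighbours $v_1,u_1,\beta,\gamma$, which is a locally bipartite picture.
\begin{itemize}
\item There are no parallel edges. The two crossed edges ending at $p$ start at the distinct vertices $v_1$ and $u_1$. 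In Lemma~\ref{lem:all-2fake-quadrangulation} both started at the same vertex, and that is exactly what breaks once one face is a $6$-face.
\item There is no triangle either, since neither $w_1p$ nor $u_1v_1$ is forced.
\end{itemize}
You mention using $u_1,v_1$ as well, but the mechanism you describe cannot close.

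\textbf{The missing idea is the paper's Claim~\ref{claim:wv}: work at $u_1$ and $v_1$ instead of $w_1$.} Let $z_1$ be the crossing of $u_1x$ and $v_1y$. Since $F_1$ is a $4$-face at $z_1$ and the two fake $3$-faces at $z_1$ are opposite, $v_1xz_1$ and $u_1yz_1$ are fake $3$-faces. Hence $v_1x$ and $u_1y$ are true edges.

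Suppose $d_{H^\times}(v_1)=2$, so $N_{H^\times}(v_1)=\{z_1,z_1'\}$. Then the face on the other side of $v_1x$ is either $F_2$ or the fake $3$-face $v_1xz_1'$. In the latter case $x$ is a neighbour of $z_1'$. It is not $w_1$, because $u_1x$ is crossed but $u_1w_1$ is not. It is not $p$, because $v_1x$ is uncrossed. So $x=\beta$, and $v_1w_1x$ is a triangle.

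Therefore, if $u_1$ and $v_1$ both have $H^\times$-degree $2$, the edges $v_1x$ and $u_1y$ both lie on $\partial(F_2)$. These edges are disjoint, while the two true edges of $F_2$ meet at $w_2$, so this is impossible. This contradicts your Step 2 conclusion and finishes the generic case.

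\textbf{The degenerate cases are also left open.} They must be settled before the face description of Step 1, and hence Step 2, can be used. The paper handles them as follows.
\begin{itemize}
\item It first takes $n$ minimal, which gives $\delta(G)\ge 4$. This excludes $\partial(F_1)$ and $\partial(F_2)$ sharing two edges.
\item A shared fake edge would make the two $3$-faces at that crossing adjacent rather than opposite.
\item A shared true edge is deleted, and Lemma~\ref{lem:3n-7-0} is applied. Your $8$-face variant would also work for this case.
\end{itemize}
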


\begin{proof} 
Suppose Proposition~\ref{lem:3n-7}
fails, and 	there exists 
$G\in \Maxe{n}{3}$, 
where $n\ge 4$,
with $e(G)=3n-7$.
We may assume that  
such a graph $G$ is chosen so that $n$ has the minimum value.
Clearly, $n\ge 5$,
as any graph in $\maxe{4}{3}$
has at most $4$ edges.
Then, $\delta(G)\ge 4$;
otherwise, for any $u\in V(G)$
with $d_G(u)\le 3$, 
$G-u\in \maxe{{n-1}}{3}$
and 
$e(G-u)\ge 3n-7-3=3(n-1)-7$,
a contradiction with the
minimality of $n$ or Lemma~\ref{lem:con}
or Lemma~\ref{lem:3n-6}.

By Lemma~\ref{lem:3n-7-2}, 
there are two single-fake faces $F_1$ and $F_2$ in $G^\times$
such that every  $4^+$-face 
in $\F(G^\times)\setminus \{F_1, F_2\}$ 
is an alternating $4$-face, 
and every fake vertex 
in $G^\times$
is incident with exactly two opposite fake 3-faces in $G^\times$.

Assume that \(F_1\) and \(F_2\) are the two single-fake 4-faces \(u v c w u\) and
\(u' v' c' w' u'\), respectively, shown in Figure~\ref{fig:caseiiib1a},
where $c$ and $c'$ are 
fake vertices in $G^\times$.

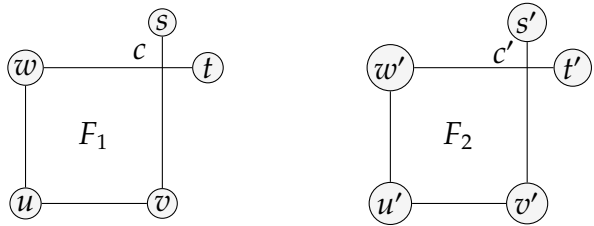
\begin{figure}[H]
\centering
\begin{tikzpicture}[scale=0.6]
	\begin{pgfonlayer}{nodelayer}
		\node [style=whitenode] (0) at (-5, 0) {$u$};
		\node [style=whitenode] (1) at (-5, 3) {$w$};
		\node [style=whitenode] (2) at (-2, 0) {$v$};
		\node [style=whitenode] (3) at (-2, 4) {$s$};
		\node [style=whitenode] (4) at (-1, 3) {$t$};
		\node [style=none] (9) at (-2.5, 3.35) {$c$};
		\node [style=none] (18) at (-3.5, 1.5) {$F_1$};
		
		
		\node [style=whitenode] (12) at (3, 0) {$u'$};
		\node [style=whitenode] (13) at (3, 3) {$w'$};
		\node [style=whitenode] (14) at (6, 0) {$v'$};
		\node [style=whitenode] (15) at (6, 4) {$s'$};
		\node [style=whitenode] (16) at (7, 3) {$t'$};
		\node [style=none] (17) at (5.5, 3.35) {$c'$};
		\node [style=none] (19) at (4.5, 1.5) {$F_2$};
	\end{pgfonlayer}
	\begin{pgfonlayer}{edgelayer}
		\draw (1) to (0);
		\draw (0) to (2);
		\draw (4) to (1);
		\draw (3) to (2);
		\draw (13) to (12);
		\draw (12) to (14);
		\draw (16) to (13);
		\draw (15) to (14);
	\end{pgfonlayer}
\end{tikzpicture}

\caption{$F_1$ and $F_2$}
\label{fig:caseiiib1a}

\end{figure}

We are now going to complete the proof by establishing the following claims.

\begin{claim}\label{cl:2edge}
$\partial(F_1)$ and $\partial(F_2)$ have at most 
one edge in common.
\end{claim}

\begin{proof}
Suppose the claim fails, 
and $\partial(F_1)$ and $\partial(F_2)$ have at  least
two edges in common.
Then, they share exactly two edges, as shown in Figure~\ref{share2edges} (a) or (b). 
However, observe from 
Figure~\ref{share2edges} (a) 
and (b) that 
$G$ contains some 
vertices of degree $2$
(see $u$ and $w$),
a contradiction to the conclusion 
that $\delta(G)\ge 4$.
Hence Claim~\ref{cl:2edge} holds.
\end{proof}

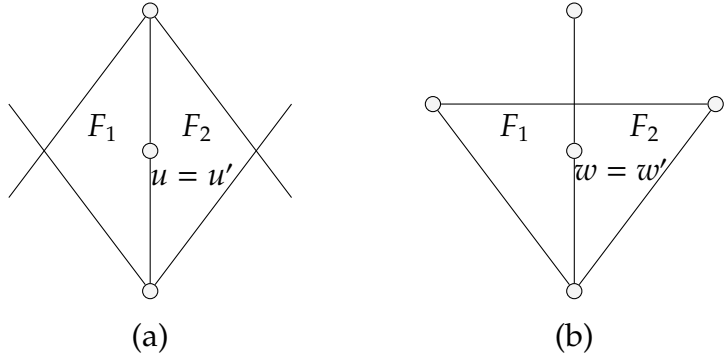
\begin{figure}[H]
	\centering
\begin{tikzpicture}[scale=0.62]
	\begin{pgfonlayer}{nodelayer}
		\node [style=whitenode] (0) at (-4, 16) {};
		\node [style=whitenode] (1) at (-4, 13) {};
		\node [style=whitenode] (2) at (-4, 10) {};
		\node [style=none] (3) at (-7, 14) {};
		\node [style=none] (4) at (-7, 12) {};
		\node [style=none] (5) at (-1, 14) {};
		\node [style=none] (6) at (-1, 12) {};
		\node [style=none] (7) at (-5, 13.5) {$F_1$};
		\node [style=none] (8) at (-3, 13.5) {$F_2$};
		\node [style=none] (9) at (-3.1, 12.5) {$u=u'$};
		\node [style=whitenode] (10) at (2, 14) {};
		\node [style=whitenode] (11) at (8, 14) {};
		\node [style=whitenode] (12) at (5, 10) {};
		\node [style=whitenode] (13) at (5, 13) {};
		\node [style=whitenode] (14) at (5, 16) {};
		\node [style=none] (15) at (3.75, 13.5) {$F_1$};
		\node [style=none] (16) at (6.5, 13.5) {$F_2$};
		\node [style=none] (17) at (6, 12.6) {$w=w'$};
		\node [style=none] (18) at (-4, 9) {(a)};
		\node [style=none] (19) at (5, 9) {(b)};
	\end{pgfonlayer}
	\begin{pgfonlayer}{edgelayer}
		\draw (0) to (4.center);
		\draw (3.center) to (2);
		\draw (2) to (5.center);
		\draw (0) to (6.center);
		\draw (0) to (1);
		\draw (1) to (2);
		\draw (10) to (12);
		\draw (12) to (11);
		\draw (11) to (10);
		\draw (13) to (12);
		\draw (14) to (13);
	\end{pgfonlayer}
\end{tikzpicture}

	
	
\caption{$F_1$ and $F_2$ 
have two edges in common 
on their boundary}

\label{share2edges}
\end{figure}

\begin{claim}\label{cl:1edge}
	$\partial(F_1)$ and $\partial(F_2)$ have no 
	edge in common.
\end{claim}

\begin{proof}
Suppose the claim fails.
By Claim~\ref{cl:2edge},
$\partial(F_1)$ and $\partial(F_2)$ have 
exactly one
edge in common.

We first show that 
$\partial(F_1)$ and $\partial(F_2)$ cannot share
 one true edge.
Otherwise, 
the structure formed by edges on $\partial(F_1)\cup  \partial(F_2)$ is 
 as shown in 
Figure~\ref{share1edge} (b) 
or (c), say (b).
Then, 
$\partial(F_1)$ 
and $\partial(F_2)$ 
share a true edge $uw$.
Let $G_0$ be the subgraph $G-uw$.
Observe that $G_0\in \Maxe{n}{3}$, 
and $F_1$ and $F_2$ are 
merged into one face $F$ in $G_0^\times$.
Furthermore, 
each face in $\F(G_0^\times)\setminus \{F\}$
is either an alternating $4$-face or a fake $3$-face,
a contradiction to Lemma~\ref{lem:3n-7-0}.

Thus, $\partial(F_1)$ and $\partial(F_2)$ share one 
fake edge and 
the structure formed by edges on $\partial(F_1)\cup  \partial(F_2)$ is as shown in 
Figure~\ref{share1edge} (a), where both $F_3$  and $F_4$ are fake $3$-faces since $a(z)=2$ and $F_1,F_2$ are 4-faces. But it is impossible because the two fake $3$-faces around $z$ must be opposite, whereas $F_3$ and $F_4$ are not, a contradiction.



Hence Claim~\ref{cl:1edge} holds.
\end{proof}

\begin{figure}[H]
	\centering
\begin{tikzpicture}[bezier bounding box, scale=0.4]
	\begin{pgfonlayer}{nodelayer}
		\node [style=whitenode] (0) at (-16, 2) {};
		\node [style=whitenode] (1) at (-8, 2) {};
		\node [style=whitenode] (2) at (-12, -2) {};
		\node [style=whitenode] (3) at (-16, -2) {};
		\node [style=whitenode] (4) at (-8, -2) {};
		\node [style=whitenode] (5) at (-12, 4) {};
		\node [style=none] (6) at (-10, 1.5) {$c=c'$};
		\node [style=none] (7) at (-14, 0) {$F_2$};
		\node [style=none] (8) at (-10, 0) {$F_1$};
		\node [style=none] (9) at (-12, -2.75) {$v=v'$};
		\node [style=none] (10) at (-16, -2.75) {$u'$};
		\node [style=none] (11) at (-16.8, 2) {$w'$};
		\node [style=none] (12) at (-7, 2) {$w$};
		\node [style=none] (13) at (-8, -2.75) {$u$};
		\node [style=none] (14) at (-12, 4.8) {$s=s'$};
		\node [style=none] (15) at (-13.5, 3) {$F_4$};
		\node [style=none] (16) at (-10.5, 3) {$F_3$};
		\node [style=none] (17) at (-12, -4) {(a)};
		\node [style=whitenode] (20) at (0, -2) {};
		\node [style=whitenode] (21) at (-4, -2) {};
		\node [style=whitenode] (22) at (4, -2) {};
		\node [style=none] (24) at (0.75, 2.8) {$w=w'$};
		\node [style=none] (25) at (-2, 0) {$F_2$};
		\node [style=none] (26) at (2, 0) {$F_1$};
		\node [style=none] (27) at (0, -2.75) {$u=u'$};
		\node [style=none] (28) at (-4, -2.825) {$v'$};
		\node [style=none] (30) at (5, 2) {};
		\node [style=none] (31) at (4, -2.75) {$v$};
		\node [style=none] (35) at (0, -4) {(b)};
		\node [style=none] (54) at (-4, 3) {};
		\node [style=none] (55) at (-5, 2) {};
		\node [style=whitenode] (56) at (0, 2) {};
		\node [style=none] (57) at (4, 3) {};
		\node [style=none] (58) at (-3.4, 2.475) {$c'$};
		\node [style=none] (59) at (4.35, 2.35) {$c$};
		\node [style=whitenode] (60) at (12, -2) {};
		\node [style=whitenode] (62) at (16, -2) {};
		\node [style=none] (63) at (12.75, 2.8) {$w=u'$};
		\node [style=none] (64) at (10, 0) {$F_2$};
		\node [style=none] (65) at (14, 0) {$F_1$};
		\node [style=none] (66) at (12, -2.75) {$u=v'$};
		\node [style=none] (67) at (8, -3) {};
		\node [style=none] (68) at (17, 2) {};
		\node [style=none] (69) at (16, -2.75) {$v$};
		\node [style=none] (70) at (12, -4) {(c)};
		\node [style=whitenode] (73) at (12, 2) {};
		\node [style=none] (74) at (16, 3) {};
		\node [style=none] (75) at (7.975, 2.775) {$w'$};
		\node [style=none] (76) at (16.35, 2.35) {$c$};
		\node [style=whitenode] (77) at (8, 2) {};
		\node [style=none] (78) at (7, -2) {};
		\node [style=none] (79) at (7.425, -1.525) {$c'$};
	\end{pgfonlayer}
	\begin{pgfonlayer}{edgelayer}
		\draw (0) to (1);
		\draw (1) to (4);
		\draw (4) to (2);
		\draw (2) to (3);
		\draw (3) to (0);
		\draw (5) to (2);
		\draw [ in=60, out=180] (5) to (0);
		\draw [ in=120, out=0] (5) to (1);
		\draw (22) to (20);
		\draw (20) to (21);
		\draw (54.center) to (21);
		\draw (55.center) to (56);
		\draw (56) to (30.center);
		\draw (57.center) to (22);
		\draw (56) to (20);
		\draw (62) to (60);
		\draw (73) to (68.center);
		\draw (74.center) to (62);
		\draw (73) to (60);
		\draw (77) to (73);
		\draw (77) to (67.center);
		\draw (60) to (78.center);
	\end{pgfonlayer}
\end{tikzpicture}

%
	
	\caption{$F_1$ and $F_2$ 
		have exactly one edge in common 
		on their boundary}
	
	\label{share1edge}
\end{figure}
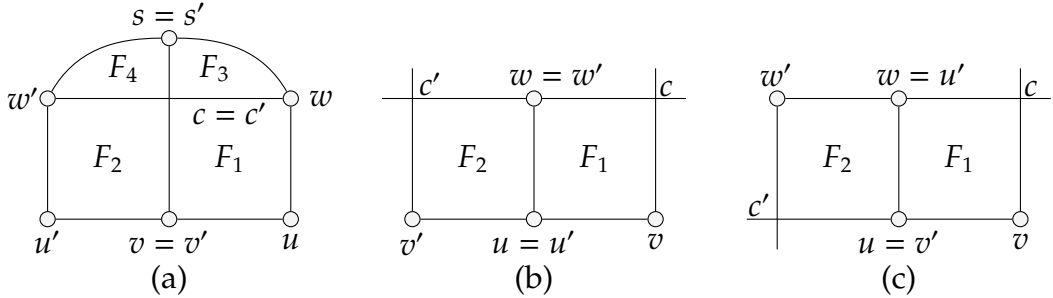

By Claim~\ref{cl:1edge}, 
$\partial(F_1)$ and $\partial(F_2)$ have 
no edge in common.

Since $a(c)=2$  and  \(u v c w u\) is a 4-face,  by Lemma \ref{lem:fake-no-adjacent-triangles}, $ws$ and $vt$ are  true edges in $G^\times$, and both $wscw$ and $vtcv$  bound fake 3-faces. 
Since the boundaries of faces $F_1$ and $F_2$ share no common edge, $uw$ is incident with a fake 3-face $wuc_2w$, and $uv$ is incident with a fake 3-face $uvc_1u$. Consequently, the structure near $F_1$ is depicted in Figure~\ref{fig:caseiiib1}.

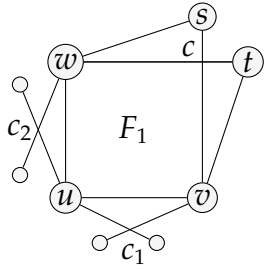
\begin{figure}[H]
\centering
\begin{tikzpicture}[scale=0.6]

	\begin{pgfonlayer}{nodelayer}

		\node [style=whitenode]
		 (20) at (-6, -1) {$u$};
		\node [style=whitenode] (21) at (-6, 2) {$w$};
		\node [style=whitenode] (22) at (-3, -1) {$v$};
		\node [style=whitenode] (23) at (-3, 3) {$s$};
		\node [style=whitenode] (24) at (-2, 2) {$t$};
		\node [style=none] (25) at (-3.3, 2.3) {$c$};
		\node [style=none] (26) at (-4.5, 0.5) {$F_1$};
		\node [style=whitenode] (32) at (-5.25, -2) {};
		\node [style=whitenode] (33) at (-4, -2) {};
		\node [style=whitenode] (34) at (-7, 1.5) {};
		\node [style=whitenode] (35) at (-7, -0.5) {};
		\node [style=none] (37) at (-4.5, -2.25) {$c_1$};
		\node [style=none] (38) at (-7, 0.5) {$c_2$};
	\end{pgfonlayer}

	\begin{pgfonlayer}{edgelayer}
		\draw (21) to (20);
		\draw (20) to (22);
		\draw (24) to (21);
		\draw (23) to (22);
		\draw (23) to (21);
		\draw (24) to (22);
		\draw (21) to (24);
		\draw (22) to (32);
		\draw (20) to (33);
		\draw (21) to (35);
		\draw (34) to (20);
	\end{pgfonlayer}
\end{tikzpicture}
\caption{ The local structure around $F_1$}
\label{fig:caseiiib1}
\end{figure}

Let $H:=\C(G)$.

\begin{claim}\label{claim:wv}
	Either
$d_{H^\times}(w)\ge 3$
or
$d_{H^\times}(v)\ge 3$.
\end{claim}

\begin{proof}
Suppose that 
$d_{H^\times}(w)\le 2$
and
$d_{H^\times}(v)\le 2$.

Since $c,c_2\in N_{H^\times}(w)$
and $c,c_1\in N_{H^\times}(v)$, 
we have 
$N_{H^\times}(w)=\{c,c_2\}$
and $ N_{H^\times}(v)=\{c,c_1\}$.
It follows that 
both $ws$ and $vt$ are 
incident with one fake $3$-face 
only, 
implying that both of them 
are incident with $F_2$.
Thus, both $ws$ and $vt$ 
are the only true 
edges on $\partial(F_2)$.
However, they have no end 
in common,
contradicting the fact that 
the two true edges in $F_2$ 
have one end in common.

Hence the claim holds.
\end{proof}

Note that $H^\times$ is obtained 
from $G^\times$ by the removal of all true edges in  $G^\times$.
Thus, $F_1$ in $G^\times$ will be expanded 
into an alternating $6$-face 
$F^*_1$ 
in 
$H^\times$ after the two true edges $uw$ and $uv$ are removed.
Similarly, $F_2$ in $G^\times$ will be expanded 
into an alternating 
$6$-face 
$F^*_2$  in $H^\times$.
By Remark~\ref{re:notrue}, 
each true edge of  
$G^\times$ which is not 
on $\partial(F_1)\cup \partial(F_2)$ 
is not incident with any $4$-face
of $G^\times$, and thus this edge
is on the boundary of 
two fake $3$-faces of $G^\times$.
Such two fake $3$-faces of $G^\times$ will be 
merged into an alternating 
$4$-face in $H^\times$.
Hence each face in $\F(H^\times)
\setminus \{F^*_1, F^*_2\}$
is an alternating $4$-face.


Since each face in $\F(H^\times)
\setminus \{F^*_1, F^*_2\}$
is an alternating $4$-face, 
by Lemma \ref{lem:plane2-connected}, 
$H^\times$ is $2$-connected
and 
$\delta(H^\times)\ge 2$.
By Claim~\ref{claim:wv}
and Corollary~\ref{cor:lowdegree-special} (ii),
$n_2(H^\times) +n_3(H^\times)
\ge 7$.
Since each fake vertex in 
$H^\times$ is of degree $4$
and $\partial(F_1^*)\cup 
\partial(F_2^*)$ contains 
six true vertices only, 
$H^\times$ has a vertex $p$
which is 
not on $\partial(F^*_1)\cup 
\partial(F^*_2)$ with the property that $d_{H^\times}(p)=2$ or $3$.
Clearly, $p$ is a true vertex in 
$H^\times$.

Since 
$p$ is a true vertex in $H^\times$ which 
is not on $\partial(F^*_1)\cup 
\partial(F^*_2)$, 
$p$ is incident 
with alternating $4$-faces of $H^\times$ only.
Since $G\in \Maxe{n}{3}$, 
we have $H\in \Maxe{n}{3}$.
By Lemma \ref{lem:degree3po},
$d_{H^\times}(p)=d_H(p)\ge 4$,
contradicting the fact that 
$d_{H^\times}(p)\le 3$.

Hence Proposition~\ref{lem:3n-7} holds.
\end{proof}

\noindent\textbf{Proof of Theorem~\ref{thm:K3}.} Combining 
Corollary~\ref{cor-3-3}, 
Lemma~\ref{lem:3n-6}
and Proposition \ref{lem:3n-7}, the upper bound $3n-8$ of $\maxe{n}{3}$  follows.  
Furthermore, Karpov  \cite{zbMATH06347737} showed that  for every even $n\ge 8$ there exist $n$-vertex bipartite 1-planar graphs with $3n-8$ edges, and these graphs are $K_3$-free. Therefore, Theorem \ref{thm:K3} holds.

\section{Proof of Theorem \ref{thm:K4}} 

It is well known that any drawing of a (simple) graph $G$ with $n\ge 3$ vertices and $m$ edges in the plane has at least $ m-(3n-6)$ crossings
(see~ page 10 in \cite{MR3751397}). 
We first obtain a sharper 
bound on the number of crossings in any 1-planar drawing of a graph of $\Maxe{n}{4}$.

\begin{lem}
\label{lem:k4free}
For any connected 1-plane graph 
$G$ in $\Maxe{n}{4}$,
where $n\ge 4$, 
$G$ has at least $2(m-3n+6)$ crossings,
where $m=e(G)$.
\end{lem}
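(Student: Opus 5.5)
We combine the edge-count formula (Theorem~\ref{lem:maintool}) with the lower bound on $A$ for $K_4$-free graphs (Lemma~\ref{obs:A-nonneg_K4}). Let $x:=|V_{\mathrm{fake}}(G^\times)|$, which is exactly the number of crossings of the drawing. Since $G$ is connected with $n\ge 4\ge 3$ vertices, Theorem~\ref{lem:maintool} gives
\[
m = 3n-6-\frac{A+B}{2}-\frac34\,C ,
\]
and therefore
\[
2(m-3n+6) = -(A+B)-\frac32\,C .
\]

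By Lemma~\ref{obs:pB-nonneg}, $B\ge 0$ and $C\ge 0$; this lemma requires only $n\ge 3$ and places no restriction on cliques. Dropping these two terms gives $2(m-3n+6)\le -A$. Since $G\in\Maxe{n}{4}$, Lemma~\ref{obs:A-nonneg_K4} gives $A\ge -x$, that is, $-A\le x$. Chaining the two inequalities yields
\[
2(m-3n+6)\le x ,
\]
which is exactly the claim that $G$ has at least $2(m-3n+6)$ crossings.

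There is no serious obstacle here; the argument is just bookkeeping with the signs. The one point to check is that Lemma~\ref{obs:A-nonneg_K4} indeed applies, since its proof uses only that every fake vertex $z$ has $a(z)\ge 1$. That holds because four triangular faces around a crossing would create a $K_4$, and $G$ is $K_4$-free. When $m\le 3n-6$ the bound is trivially true anyway, since the number of crossings is nonnegative.
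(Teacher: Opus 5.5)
Your proposal is correct and follows essentially the same argument as the paper: you apply the edge-count formula (Theorem~\ref{lem:maintool}), drop the nonnegative terms $B$ and $C$, and then use $A\ge -x$ from Lemma~\ref{obs:A-nonneg_K4}, with $x$ equal to the number of crossings. That chain gives $2(m-3n+6)\le x$ exactly as in the paper.
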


\begin{proof}
Let $c$ be the number of crossings in  $G$.  By Lemma \ref{obs:A-nonneg_K4}, $A\ge -c$. Moreover, $B\ge 0$ and $C\ge 0$  by Lemma \ref{obs:pB-nonneg}. Now Lemma~\ref{lem:maintool} gives
\[
m=3n-6-\frac{A+B}{2}-\frac34\,C
\le 3n-6-\frac{A}{2}
\le 3n-6+\frac{c}{2}.
\]
which yields $c\ge 2(m-3n+6)$, as required.
\end{proof}

\begin{prop}
	\label{k4free-2}
	$\maxe{n}{4}\le  \frac 72 n-7$
	for $n\ge 4$.
\end{prop}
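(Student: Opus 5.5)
The plan is to combine Lemma~\ref{lem:k4free} with the standard fact that a $1$-planar drawing of an $n$-vertex graph has at most $n-2$ crossings. First I will reduce to the connected case. Let $G\in\Maxe{n}{4}$ with $n\ge 4$ and $m=e(G)$. If $G$ is disconnected with components $G_1,\dots,G_t$, I will argue by induction on $n$, or directly by summing bounds over components. A component on $n_i\ge 4$ vertices satisfies $e(G_i)\le \frac72 n_i-7$ by induction. A component on $n_i\le 3$ vertices has at most $3\le \frac72 n_i$ edges, and indeed at most $\binom{n_i}{2}$. Summing, the loss of $7$ per large component, together with the slack from small components, should give $m\le \frac72 n-7$. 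One subcase needs a direct check: when all components are small, $m\le n$, and $n\le \frac72 n-7$ holds for $n\ge 3$. A second subcase is a single large component plus small ones; there the small components contribute at most $n_i\le\frac72 n_i$ each, which is fine. So we may assume $G$ is connected and take a $1$-planar drawing of it.

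Second, I will bound the number $c$ of crossings from above. In a $1$-plane graph, each crossing involves two edges, and its four endpoints can be joined by a kite. More simply: deleting one edge from each crossing pair yields a plane graph, so $m-c\le 3n-6$. That bound goes in the wrong direction for this argument, however. What I need is $c\le n-2$, the well-known fact that the crossings of a $1$-plane graph number at most $n-2$. To justify it, I will take the crossing skeleton $\mathcal C(G)$ and choose one edge from each crossing pair; the $c$ chosen edges form a plane graph. The remaining $c$ crossed edges, one per crossing, can each be drawn inside a quadrilateral, giving a plane bipartite-like structure with $2c$ edges. Concretely, the planarization of $\mathcal C(G)$ has $n+c$ vertices and $4c$ edges and is bipartite between true and fake vertices. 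Euler's bound for planar bipartite graphs then gives $4c\le 2(n+c)-4$, i.e.\ $c\le n-2$. This works only when $c\ge 1$ and the graph is connected enough; otherwise I will apply the bound per component, which only helps.

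Third, Lemma~\ref{lem:k4free} gives $2(m-3n+6)\le c\le n-2$. Hence $m\le 3n-6+\frac{n-2}{2}=\frac72 n-7$, which is the claim.

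The main obstacle is the routine but fiddly disconnected case. I would handle it by strong induction on $n$, with base cases checked by Tur\'an's theorem (for $n\le 4$, $e\le\lfloor n^2/3\rfloor\le \frac72 n-7$ once $n\ge 4$; for $n=4$ this gives $5\le 7$). The bipartite Euler step also needs care when the planarization of $\mathcal C(G)$ is disconnected or has fewer than three vertices; in those cases I apply the bound componentwise, and components without crossings contribute nothing to $c$.
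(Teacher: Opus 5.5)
Your proposal is correct and follows the paper's proof: Lemma~\ref{lem:k4free} gives $c\ge 2(m-3n+6)$, and the upper bound $c\le n-2$ finishes the argument. The only differences are minor. You prove $c\le n-2$ yourself via $4c\le 2(n+c)-4$ for the bipartite plane graph $\mathcal{C}(G)^\times$; this graph is simple because adjacent edges never cross, a point you should state. The paper instead cites Czap and Hud\'ak for this bound, and it takes $G$ extremal and hence connected where you sum over components.
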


\begin{proof}
Let $G\in \Maxe{n}{4}$
with $e(G)=\maxe{n}{4}$,
where $n\ge 4$, 
and let 
$c$ denote the number of crossings in a $1$-planar drawing of $G$.
By the assumption of $G$,
$G$ is connected.

By Czap and Hud\'ak~\cite{MR3084596}, every $1$-planar drawing of a $1$-planar graph on $n$ vertices has at most $n-2$ crossings, i.e., 
$c\le n-2$.
On the other hand, Lemma~\ref{lem:k4free} yields
$c\ge 2(m-3n+6)$,
where $m=e(G)$.
Combining these two inequalities implies that 
$
2(m-3n+6)\le n-2,
$
and therefore
$m\le \frac{7}{2}n-7$.
\end{proof}

We are now going to 
complete the proof of 
Theorem~\ref{thm:K4}.

\vspace{2 mm}

\noindent {\it Proof} of Theorem~\ref{thm:K4}:
For $1\le n\le 7$, 
Tur\'an's theorem gives
$e\left(T_3(n)\right)=\Bigl\lfloor \frac{n^2}{3}\Bigr\rfloor,
$
where $T_3(n)$ is planar
for all $1\le n\le 6$
and  $T_3(7)
\cong K_{3,2,2}$ is $1$-planar
(see \cite{MR2876333}).
Hence, for  $1\le n\le 7$, 
$\maxe{n}{4}=
\bigl\lfloor n^2/3\bigr\rfloor$.

For $n=8$, Tur\'an's theorem
yields that 
$T_3(8)\cong K_{3,3,2}$
is the only $K_4$-free graph
of order $8$ and size $21$.
However, $K_{3,3,2}$ is not $1$-planar (see \cite{MR2876333}),
implying that 
$\maxe{n}{4}\le 20$.
Furthermore,  
the graph shown in 
Figure \ref{fig:two-ex} (a) belongs to $\Maxe{8}{4}$
with size $20$.
Thus, $\maxe{8}{4}=20=\bigl\lfloor n^2/3\bigr\rfloor
-1$.
Hence, 
Theorem~\ref{thm:K4}
holds for $1\le n\le 8$.
\begin{figure}[H]
	\centering
	\begin{tikzpicture}[scale=0.65]
		\begin{pgfonlayer}{nodelayer}
			\node [style=whitenode] (0) at (-6, -1.97213) {};
			\node [style=whitenode] (2) at (-6.01557, 2.01885) {};
			\node [style=whitenode] (4) at (-1.99713, 2.02541) {};
			\node [style=whitenode] (9) at (-2.99713, 1.04508) {};
			\node [style=whitenode] (14) at (-5.06434, 1.02664) {};
			\node [style=whitenode] (19) at (-5.01967, -0.778689) {};
			\node [style=whitenode] (21) at (-2.9418, -0.760246) {};
			\node [style=whitenode] (23) at (-2.01147, -1.9668) {};
			
			\node [style=whitenode] (24) at (8.03349, -2.001) {};
			\node [style=whitenode] (25) at (5.0014, 0.96057) {};
			\node [style=whitenode] (26) at (7.01348, -0.24013) {};
			\node [style=whitenode] (27) at (5.01348, -0.27395) {};
			\node [style=whitenode] (28) at (8.05241, 1.95529) {};
			\node [style=whitenode] (29) at (4.00287, 1.95605) {};
			\node [style=whitenode] (30) at (6.01114, -0.5) {};
			\node [style=whitenode] (31) at (7.01329, 0.9705) {};
			\node [style=whitenode] (32) at (4.01605, -1.99419) {};
		\end{pgfonlayer}
		\begin{pgfonlayer}{edgelayer}
			\draw (4) to (9);
			\draw (4) to (21);
			\draw (0) to (2);
			\draw (14) to (19);
			\draw (19) to (21);
			\draw (2) to (14);
			\draw (2) to (4);
			\draw (4) to (23);
			\draw (0) to (19);
			\draw (2) to (19);
			\draw (21) to (23);
			\draw (9) to (21);
			\draw [style=blueedge, bend left=75, looseness=2.00] (2) to (23);
			\draw [style=blueedge, bend right=75, looseness=2.00] (4) to (0);
			\draw [style=blueedge] (19) to (23);
			\draw [style=blueedge] (21) to (0);
			\draw [style=blueedge] (14) to (21);
			\draw [style=blueedge] (9) to (19);
			\draw [style=blueedge] (2) to (9);
			\draw [style=blueedge] (4) to (14);
			\draw (24) to (26);
			\draw (24) to (28);
			\draw (24) to (32);
			\draw (25) to (27);
			\draw (25) to (29);
			\draw (25) to (31);
			\draw (26) to (30);
			\draw (26) to (31);
			\draw (27) to (30);
			\draw (27) to (32);
			\draw (28) to (29);
			\draw (28) to (31);
			\draw (29) to (32);
			\draw (30) to (31);
			\draw (30) to (32);
			\draw [style=blueedge] (29) to (27);
			\draw [style=blueedge] (25) to (32);
			\draw [style=blueedge] (27) to (31);
			\draw [style=blueedge] (25) to (30);
			\draw [style=blueedge] (32) to (26);
			\draw [style=blueedge] (30) to (24);
			\draw [style=blueedge] (29) to (31);
			\draw [style=blueedge] (25) to (28);
			\draw [style=blueedge] (28) to (26);
			\draw [style=blueedge] (31) to (24);
			\draw [style=blueedge, bend left=75, looseness=2.0] (32) to (28);
			\draw [style=blueedge, bend left=75, looseness=2.0] (29) to (24);
		\end{pgfonlayer}
	\end{tikzpicture}
	
	(a) A graph in $\Maxe{8}{4}$ \hspace{1.5 cm} (b) A graph in $\Maxe{9}{5}$
	
	\caption{ Graphs in $\Maxe{8}{4}$ 
		and $\Maxe{9}{5}$,
	respectively}
	\label{fig:two-ex}
\end{figure}

Now assume that $n\ge 9$.
By Proposition~\ref{k4free-2},
it suffices to show that for any $n\ge 9$, there exists 
$G_n \in \Maxe{n}{4}$
with $e(G_n)= \left\lfloor \frac{7}{2}n \right\rfloor -7$.
Let $G_9$ and $G_{10}$ be the 
graphs shown in Figure~\ref{fig:HkG9G10}
(ii) and (iii), respectively.
It can be verified that 
$G_n\in \Maxe{n}{4}$
for $n=9,10$.
Note that 
$e(G_9)=24$ and 
$e(G_{10})=28$,
implying that $e(G_n)=\left\lfloor \frac{7}{2}n \right\rfloor -7$
for $n=9,10$.
Thus, the following claim holds.

\begin{claim}
	\label{cl:k4free-0}
	For $n=9,10$, 
	$G_n\in \Maxe{n}{4}$ 
	with $e(G_n)=\left\lfloor \frac{7}{2}n \right\rfloor -7$.
\end{claim}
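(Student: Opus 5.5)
The claim has three parts for each $n\in\{9,10\}$: $G_n$ is $1$-planar, $G_n$ is $K_4$-free, and $e(G_n)$ equals $\lfloor 7n/2\rfloor-7$. I would verify each part directly from the drawings in Figure~\ref{fig:HkG9G10}(ii) and (iii).

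\textbf{Size and $1$-planarity.} First I count the edges in each drawing, checking $e(G_9)=24=\lfloor 63/2\rfloor-7$ and $e(G_{10})=28=35-7$. Then I check that the displayed drawing is a good drawing in which every edge is crossed at most once. Concretely, I list the crossing pairs and confirm that no edge appears in two of them and that no two adjacent edges cross. This exhibits $G_n$ as a $1$-plane graph. As a consistency check, the number of crossings should be at most $n-2$ by Czap and Hud\'ak, and at least $2(m-3n+6)$ by Lemma~\ref{lem:k4free}. For $n=9$ this gives at least $6$ crossings with at most $7$ allowed. For $n=10$ it gives exactly $8=n-2$, so $G_{10}$ must be drawn with exactly $8$ crossings, and $G_9$ must have $6$ or $7$.

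\textbf{$K_4$-freeness.} The cleanest route is to exhibit a proper $3$-colouring of $G_n$. A $3$-colourable graph is tripartite, hence $K_4$-free. So I would label the vertices of each drawing with colours $1,2,3$ and check every edge. If the intended graphs are not $3$-colourable, I would instead use a local argument. Any $K_4$ in a $1$-plane graph either is drawn planarly, so that it lies on triangular faces of $G^\times$, or contains a crossing pair $uv,xy$ whose four endpoints are pairwise adjacent. Following the proof of Lemma~\ref{obs:A-nonneg_K4}, I would check, for each crossing $z$ and each planar triangle, that some required edge among the four endpoints is absent.

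\textbf{Main obstacle.} The main difficulty is the tightness of $G_{10}$. Since it needs exactly $n-2$ crossings, its crossing edges must form an optimal-like structure, and every crossing $z$ must have $a(z)=1$, with $B=C=0$ forced. So the $3$-colouring check, or the crossing-by-crossing non-$K_4$ check, must be carried out carefully.
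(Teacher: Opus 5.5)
Your plan matches the paper, which likewise just states that membership of $G_9,G_{10}$ in $\Maxe{n}{4}$ ``can be verified'' from the drawings and records $e(G_9)=24$, $e(G_{10})=28$. Your $3$-colouring route goes through: in both drawings $u$ and $v$ are non-adjacent and the remaining vertices induce a bipartite graph (parts $\{x,b_1,a_2\}$ and $\{y,z,a_1,b_2\}$ in $G_9$, and the cube on $x,y,a_1,b_1,a_2,b_2,x',y'$ in $G_{10}$), so you never need the fallback local argument---which as stated would miss planarly drawn $K_4$'s whose triangles are not faces of $G^\times$.
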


\begin{figure}[h!]
	\centering
	
	\makebox[\textwidth][l]{%
		\begin{minipage}[t]{0.3\textwidth}
			\centering
			\begin{tikzpicture}[scale=0.46]
				\begin{pgfonlayer}{nodelayer}
					\node [style=whitenode] (26) at (-4, 2.25) {$a_1$};
					\node [style=whitenode] (27) at (-4, 0.25) {$b_1$};
					\node [style=whitenode] (28) at (-2, 2.25) {};
					\node [style=whitenode] (29) at (-2, 0.25) {};
					\node [style=whitenode] (30) at (0.05, 0.25) {};
					\node [style=whitenode] (31) at (0, 2.25) {};
					\node [style=whitenode] (32) at (2, 2.25) {};
					\node [style=whitenode] (33) at (2, 0.25) {};
					\node [style=whitenode] (34) at (4, 2.25) {$a_{k}$};
					\node [style=whitenode] (35) at (4, 0.25) {$b_{k}$};
					\node [style=blacknode] (36) at (0, 5.25) {};
					\node [style=blacknode] (37) at (0, -2.75) {};
					\node [style=none] (39) at (0, -3.75) {$v$};
					\node [style=none] (40) at (1, 2.25) {$\ldots$};
					\node [style=none] (41) at (1, 0.25) {$\ldots$};
					\node [style=none] (u) at (0, 5.75) {$u$};
				\end{pgfonlayer}
				\begin{pgfonlayer}{edgelayer}
					\draw (26) to (28);
					\draw (28) to (31);
					\draw (32) to (34);
					\draw (34) to (35);
					\draw (35) to (33);
					\draw (30) to (29);
					\draw (29) to (27);
					\draw (27) to (26);
					\draw [bend right=15, looseness=0.75] (36) to (26);
					\draw (36) to (28);
					\draw (36) to (31);
					\draw (36) to (32);
					\draw [bend left=15] (36) to (34);
					\draw [bend right=15] (27) to (37);
					\draw (37) to (29);
					\draw (37) to (30);
					\draw (37) to (33);
					\draw [bend right=15] (37) to (35);
					\draw [style=blueedge, bend left=15] (37) to (26);
					\draw [style=blueedge, bend left=15] (28) to (37);
					\draw [style=blueedge, in=285, out=45, looseness=1.50] (37) to (32);
					\draw [style=blueedge, bend right=45] (36) to (29);
					\draw [style=blueedge, bend left] (30) to (36);
					\draw [style=blackedge] (28) to (29);
					\draw [style=blackedge] (31) to (30);
					\draw [style=blackedge] (32) to (33);
					\draw [style=blueedge, bend left=15] (36) to (35);
				\end{pgfonlayer}
			\end{tikzpicture}

		\end{minipage}
		\hspace{0.02\textwidth}%
		
		\begin{minipage}[t]{0.64\textwidth}
			\centering
\begin{tikzpicture}[scale=0.46, bezier bounding box]
	\begin{pgfonlayer}{nodelayer}
		\node [style=whitenode] (24) at (2.1988, 2) {$x$};
		\node [style=whitenode] (25) at (2.1988, 0) {$y$};
		\node [style=whitenode] (26) at (4.1988, 2) {$a_1$};
		\node [style=whitenode] (27) at (4.1988, 0) {$b_1$};
		\node [style=whitenode] (28) at (6.2488, 0) {$b_2$};
		\node [style=whitenode] (29) at (6.1988, 2) {$a_2$};
		\node [style=whitenode] (30) at (8.1988, 2) {$x'$};
		\node [style=whitenode] (31) at (8.1988, 0) {$y'$};
		\node [style=blacknode] (32) at (5.1988, 5) {};
		\node [style=blacknode] (33) at (5.1988, -3) {};
		\node [style=none] (34) at (5.2, 5.5) {$u$};
		\node [style=none] (35) at (5.2, -3.6) {$v$};
		\node [style=whitenode] (39) at (-7, 0) {$y$};
		\node [style=whitenode] (40) at (-5, 2) {$a_1$};
		\node [style=whitenode] (41) at (-3, 0) {$b_2$};
		\node [style=whitenode] (42) at (-9, 0) {$z$};
		\node [style=whitenode] (43) at (-5, 0) {$b_1$};
		\node [style=whitenode] (44) at (-7, 2) {$x$};
		\node [style=whitenode] (45) at (-3, 2) {$a_2$};
		\node [style=blacknode] (46) at (-4, 5) {};
		\node [style=blacknode] (47) at (-4, -3) {};
		\node [style=none] (48) at (-4, 5.5) {$u$};
		\node [style=none] (49) at (-4, -3.6) {$v$};
	\end{pgfonlayer}
	\begin{pgfonlayer}{edgelayer}
		\draw (24) to (26);
		\draw (26) to (29);
		\draw (30) to (31);
		\draw (28) to (27);
		\draw (27) to (25);
		\draw (25) to (24);
		\draw [bend right, looseness=0.75] (32) to (24);
		\draw [style={shadow_silver}] (27.center)
			 to (26.center)
			 to (32.center)
			 to (29.center)
			 to (28.center)
			 to (33.center)
			 to cycle;
		\draw [bend left] (32) to (30);
		\draw [bend right] (25) to (33);
		\draw [bend right=45] (33) to (31);
		\draw [bend left=15] (33) to (24);
		\draw [style=blueedge, bend left=15] (26) to (33);
		\draw [in=15, out=-30, looseness=0.75] (29) to (33);
		\draw [bend right=45] (32) to (27);
		\draw [style=blueedge, in=-90, out=135] (28) to (32);
		\draw [bend left=15, looseness=0.25] (32) to (31);
		\draw (29) to (30);
		\draw (28) to (31);
		\draw [bend right=60, looseness=1.25] (32) to (25);
		\draw [in=-60, out=-15, looseness=1.50] (33) to (30);
		\draw [bend left=90, looseness=2.25] (24) to (30);
		\draw [bend right=90, looseness=2.25] (25) to (31);
		\draw (26) to (29);
		\draw (27) to (28);
		\draw (39) to (43);
		\draw [style={shadow_silver}] (47.center)
			 to (41.center)
			 to (45.center)
			 to (46.center)
			 to (40.center)
			 to (43.center)
			 to cycle;
		\draw (39) to (44);
		\draw (40) to (44);
		\draw (42) to (44);
		\draw [bend right=105, looseness=3.50] (39) to (45);
		\draw [in=-45, out=-75, looseness=3.25] (42) to (45);
		\draw [in=180, out=150, looseness=1.50] (39) to (46);
		\draw [style=blueedge, in=-90, out=135, looseness=0.75] (41) to (46);
		\draw [bend left=60, looseness=1.25] (42) to (46);
		\draw [in=240, out=140, looseness=1.20] (43) to (46);
		\draw (44) to (46);
		\draw (39) to (47);
		\draw [style=blueedge, bend left=15, looseness=0.75] (40) to (47);
		\draw (42) to (47);
		\draw [in=120, out=-75] (44) to (47);
		\draw [bend left=45, looseness=1.25] (45) to (47);
		\draw [style=blackedge] (40) to (45);
		\draw [style=blackedge] (43) to (41);
	\end{pgfonlayer}
\end{tikzpicture}

		\end{minipage}%
	}

(i) $H_k$ \hspace{4 cm} 
(ii) $G_9$ \hspace{4 cm} 
(iii) $G_{10}$

	\caption{Graphs $H_k$, $G_9$, and $G_{10}$.}
	\label{fig:HkG9G10}
\end{figure}
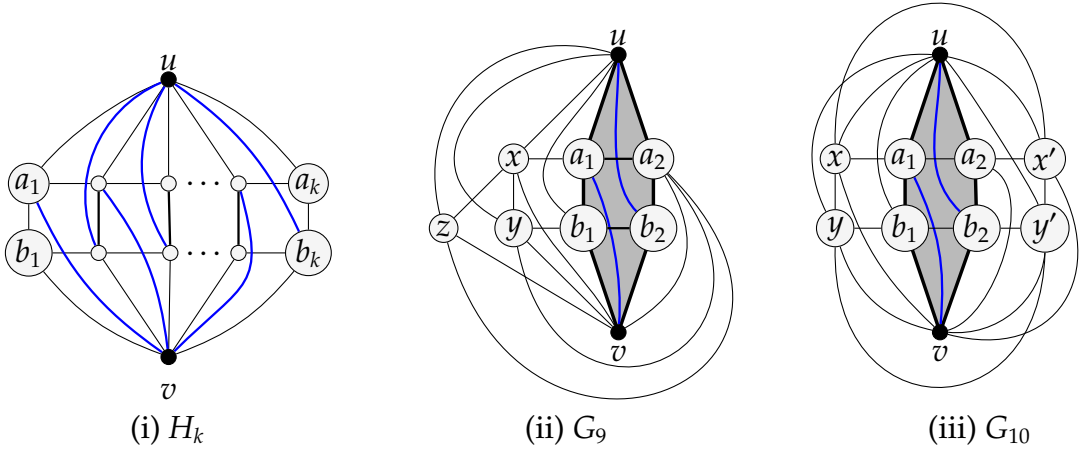

For $i=1,2$, let 
$Q_i$ denote the $1$-plane graph obtained from $G_{8+i}$ by removing all edges in 
the set $\{a_1a_2,b_1b_2, ub_2,va_1\}$.
Then, $Q_i$ has 
a true $6$-face $F_i$ whose boundary 
is the cycle $ua_1b_1vb_2a_2u$,
i.e., the shaded region 
in 
Figure~\ref{fig:HkG9G10} (ii)
and (iii), respectively.


The \emph{ladder graph} $L_k$ is the Cartesian product
$L_k := P_k \square K_2$, 
where $k\ge 1$.
Equivalently, 
$L_k$ is the graph with 
vertex set $\{a_i, b_i: 1\le i\le k\}$
and edge set 
$\{a_ia_{i+1}, b_ib_{i+1}:1\le i\le k-1 \}\cup \{a_ib_i: 1\le i\le k\}$.
The joint of two vertex-disjoint
graphs $G$ and $H$, denoted by  $G \vee H$, 
is the graph obtained from their union by adding all edges $xy$
for $x\in V(G)$ and $y\in V(H)$.
Let $H_k$ denote the graph
$L_k \vee N_2-\{ub_1, va_k\}$,
where $V(N_2)=\{u,v\}$, 
as shown in  Figure~\ref{fig:HkG9G10}(i).
Clearly, 
$$
|V(L_k)|=2k, 
\quad  |E(L_k)|=3k-2, 
\quad |V(H_k)|=2k+2,
\quad
|E(H_k)|=7k-4.
$$

By the definition of $Q_1$, $Q_2$ and $H_k$, we have the following conclusion.

\begin{claim}
	\label{cl:k4free-1}
Both	$Q_1$ and $Q_2$ are $K_4$-free, and 
	$H_k+\{ub_1,va_k\}$ is $K_4$-free
	for all $k\ge 1$.
\end{claim}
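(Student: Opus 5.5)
The two parts need different arguments. For $Q_1$ and $Q_2$ the plan is monotonicity: $Q_i$ is a spanning subgraph of $G_{8+i}$. By Claim~\ref{cl:k4free-0}, $G_{8+i}\in\Maxe{8+i}{4}$, so $G_{8+i}$ is $K_4$-free. Deleting edges cannot create a $K_4$, so $Q_i$ is $K_4$-free as well.

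For $H_k+\{ub_1,va_k\}$, note that this graph is exactly $L_k\vee N_2$, where $N_2=\{u,v\}$ is an independent pair. I will show that $L_k\vee N_2$ contains no $K_4$ by splitting on how many vertices of a putative $K_4$ lie in $\{u,v\}$. Let $S$ be the vertex set of a hypothetical $K_4$.

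\begin{itemize}
\item Since $uv$ is not an edge, $S$ contains at most one of $u,v$. Hence $|S\cap V(L_k)|\ge 3$, and these vertices form a clique of size at least $3$ in $L_k$.
\item $L_k=P_k\square K_2$ is bipartite. A proper $2$-colouring is: colour $a_i$ by the parity of $i$, and colour $b_i$ by the opposite parity. Every edge $a_ia_{i+1}$, $b_ib_{i+1}$ and $a_ib_i$ then joins vertices of different colours.
\item A bipartite graph is triangle-free, so $L_k$ has no clique of size $3$. This contradicts the first point, so no such $S$ exists.
\end{itemize}

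Thus $L_k\vee N_2$ is in fact $3$-colourable, with colour classes $\{u,v\}$ plus the two sides of $L_k$. Therefore $H_k+\{ub_1,va_k\}$ is $K_4$-free for every $k\ge1$.

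The only step that is not purely formal is the $Q_i$ part, since it relies on $G_9$ and $G_{10}$ being $K_4$-free, which Claim~\ref{cl:k4free-0} asserts from the figures. If a self-contained check were wanted, I would exhibit an explicit proper $3$-colouring of $G_9$ and $G_{10}$ from Figure~\ref{fig:HkG9G10}, which directly rules out a $K_4$.
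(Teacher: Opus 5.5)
Your argument is correct and matches the paper's, which justifies the claim only by ``the definition of $Q_1$, $Q_2$ and $H_k$'': $Q_i$ is obtained from the $K_4$-free graph $G_{8+i}$ (Claim~\ref{cl:k4free-0}) by deleting edges, and $H_k+\{ub_1,va_k\}=L_k\vee N_2$ is $K_4$-free because $L_k$ is bipartite and $uv$ is not an edge; you have simply written out these details. Your optional self-contained check also works: $\{u,v\}$, $\{x,b_1,a_2\}$, $\{y,z,a_1,b_2\}$ is a proper $3$-colouring of $G_9$, and $\{u,v\}$, $\{x,b_1,a_2,y'\}$, $\{y,a_1,b_2,x'\}$ is one of $G_{10}$.
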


For any $n=2\ell+1$,
where $\ell\ge 5$, 
let $G_n$ be the $1$-plane graph obtained from $Q_1$
and $H'_{\ell-2}:=H_{\ell-2}
-\{ua_1, a_1b_1,b_1v, vb_{\ell-2},
b_{\ell-2}a_{\ell-2}, a_{\ell-2}u\}
$
by inserting $H'_{\ell-2}$ into 
the face $F_1$ of $Q_1$ 
such that 
the six vertices $u, a_1,b_1,v, b_{\ell-2}$ and $a_{\ell-2}$  in $H'_{\ell-2}$ 
are identified with 
the six vertices $u, a_1,b_1,v, b_{2}$ and $a_{2}$ in $Q_1$,
respectively, 
and 
as shown in 
Figure \ref{fig:G2L2L+1} (i).

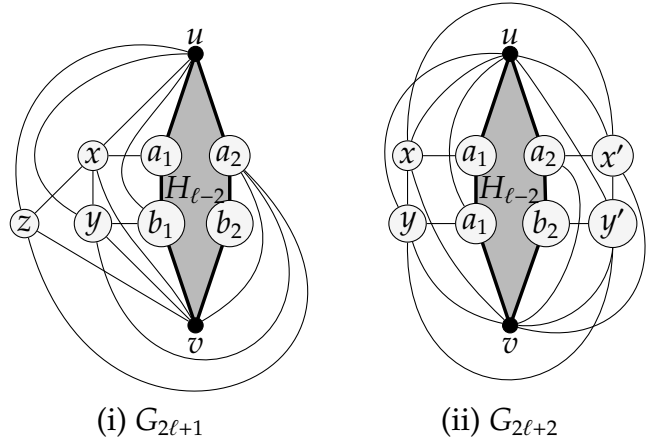
\begin{figure}[H]
	\centering
	\begin{tikzpicture}[scale=0.45, bezier bounding box]
		\begin{pgfonlayer}{nodelayer}
			\node [style=whitenode] (24) at (2.1988, 2) {$x$};
			\node [style=whitenode] (25) at (2.1988, 0) {$y$};
			\node [style=whitenode] (26) at (4.1988, 2) {$a_1$};
			\node [style=whitenode] (27) at (4.1988, 0) {$a_1$};
			\node [style=whitenode] (28) at (6.2488, 0) {$b_2$};
			\node [style=whitenode] (29) at (6.1988, 2) {$a_2$};
			\node [style=whitenode] (30) at (8.1988, 2) {$x'$};
			\node [style=whitenode] (31) at (8.1988, 0) {$y'$};
			\node [style=blacknode] (32) at (5.1988, 5) {};
			\node [style=blacknode] (33) at (5.1988, -3) {};
			\node [style=none] (34) at (5.2, 5.5) {$u$};
			\node [style=none] (35) at (5.2, -3.6) {$v$};
			\node [style=whitenode] (39) at (-7, 0) {$y$};
			\node [style=whitenode] (40) at (-5, 2) {$a_1$};
			\node [style=whitenode] (41) at (-3, 0) {$b_2$};
			\node [style=whitenode] (42) at (-9, 0) {$z$};
			\node [style=whitenode] (43) at (-5, 0) {$b_1$};
			\node [style=whitenode] (44) at (-7, 2) {$x$};
			\node [style=whitenode] (45) at (-3, 2) {$a_2$};
			\node [style=blacknode] (46) at (-4, 5) {};
			\node [style=blacknode] (47) at (-4, -3) {};
			\node [style=none] (48) at (-4, 5.5) {$u$};
			\node [style=none] (49) at (-4, -3.6) {$v$};
			\node [style=none] (51) at (-4, 1) {$H_{\ell-2}$};
			\node [style=none] (52) at (5.1988, 1) {$H_{\ell-2}$};
		\end{pgfonlayer}
		\begin{pgfonlayer}{edgelayer}
			\draw (24) to (26);
			\draw (26) to (29);
			\draw (30) to (31);
			\draw (28) to (27);
			\draw (27) to (25);
			\draw (25) to (24);
			\draw [bend right, looseness=0.75] (32) to (24);
			\draw [style={shadow_silver}] (27.center)
			to (26.center)
			to (32.center)
			to (29.center)
			to (28.center)
			to (33.center)
			to cycle;
			\draw [bend left] (32) to (30);
			\draw [bend right] (25) to (33);
			\draw [bend right=45] (33) to (31);
			\draw [bend left=15] (33) to (24);
			\draw [in=15, out=-30, looseness=0.75] (29) to (33);
			\draw [bend right=45] (32) to (27);
			\draw [bend left=15, looseness=0.25] (32) to (31);
			\draw (29) to (30);
			\draw (28) to (31);
			\draw [bend right=60, looseness=1.25] (32) to (25);
			\draw [in=-60, out=-15, looseness=1.50] (33) to (30);
			\draw [bend left=90, looseness=2.25] (24) to (30);
			\draw [bend right=90, looseness=2.25] (25) to (31);
			\draw (39) to (43);
			\draw [style={shadow_silver}] (47.center)
			to (41.center)
			to (45.center)
			to (46.center)
			to (40.center)
			to (43.center)
			to cycle;
			\draw (39) to (44);
			\draw (40) to (44);
			\draw (42) to (44);
			\draw [bend right=105, looseness=3.50] (39) to (45);
			\draw [in=-45, out=-75, looseness=3.25] (42) to (45);
			\draw [in=180, out=150, looseness=1.50] (39) to (46);
			\draw [bend left=60, looseness=1.25] (42) to (46);
			\draw [in=240, out=140, looseness=1.20] (43) to (46);
			\draw (44) to (46);
			\draw (39) to (47);
			\draw (42) to (47);
			\draw [in=120, out=-75] (44) to (47);
			\draw [bend left=45, looseness=1.25] (45) to (47);
		\end{pgfonlayer}
	\end{tikzpicture}

(i) $G_{2\ell+1}$\hspace{3 cm}
(ii) $G_{2\ell+2}$
	
	\caption{Graphs $G_{2\ell+1}$ and $G_{2\ell+2}$ for $\ell\ge 5$}
	\label{fig:G2L2L+1}
\end{figure}

Similarly, 
for any $n=2\ell+2$,
where $\ell\ge 5$, 
let $G_n$ be the $1$-plane graph obtained from $Q_2$
and 
$H'_{\ell-2}:=H_{\ell-2}
-\{ua_1, a_1b_1,b_1v, vb_{\ell-2},
b_{\ell-2}a_{\ell-2}, a_{\ell-2}u\}
$
by inserting $H'_{\ell-2}$ into 
the face $F_2$ of $Q_2$ 
such that 
the six vertices $u, a_1,b_1,v, b_{\ell-2}$ and $a_{\ell-2}$  in $H'_{\ell-2}$ 
are identified with 
the six vertices $u, a_1,b_1,v, b_{2}$ and $a_{2}$ in $Q_2$,
respectively, as shown in 
Figure \ref{fig:G2L2L+1} (ii).

\begin{claim}
	\label{cl:k4free}
For every $n\ge 11$, 
$G_n\in \Maxe{n}{4}$ 
with $e(G_n)=\left\lfloor \frac{7}{2}n \right\rfloor -7$.
\end{claim}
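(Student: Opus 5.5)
To prove Claim~\ref{cl:k4free} I would check three things for $G_n$ with $n=2\ell+1$ or $n=2\ell+2$, $\ell\ge 5$: the vertex count, the edge count, and that $G_n$ is a $K_4$-free $1$-plane graph. The upper bound is already given by Proposition~\ref{k4free-2}, so only the construction needs verification.

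\emph{Counting.} The graph $H'_{\ell-2}$ has $2(\ell-2)+2=2\ell-2$ vertices. Six of them are identified with the six vertices of $\partial(F_i)$, so $v(G_n)=v(Q_i)+(2\ell-2)-6$. Since $v(Q_1)=9$ and $v(Q_2)=10$, this gives $n=2\ell+1$ and $n=2\ell+2$ respectively. For edges, $e(Q_i)=e(G_{8+i})-4$, which is $20$ for $i=1$ and $24$ for $i=2$. Also $e(H'_{\ell-2})=7(\ell-2)-4-6=7\ell-24$; the six removed edges are exactly the boundary edges of the face, and they already belong to $Q_i$. Hence $e(G_{2\ell+1})=20+7\ell-24=7\ell-4=\lfloor \tfrac72(2\ell+1)\rfloor-7$ and $e(G_{2\ell+2})=24+7\ell-24=7\ell=\tfrac72(2\ell+2)-7$.

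\emph{$1$-planarity.} The drawing of $H_{\ell-2}$ in Figure~\ref{fig:HkG9G10}(i) has outer boundary the cycle $ua_1b_1vb_{\ell-2}a_{\ell-2}u$, and each edge in it is crossed at most once. Embedding this drawing inside the true $6$-face $F_i$ of $Q_i$ introduces no crossings with edges of $Q_i$. Edges of $Q_i$ keep their crossing count, and $F_i$ had no crossing on its boundary. So $G_n$ is $1$-plane.

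\emph{$K_4$-freeness.} This is the step needing care. Suppose $K$ is a copy of $K_4$ in $G_n$. The set $S=\{u,a_1,b_1,v,b_2,a_2\}$ (the identified vertices) is a separator: every vertex outside $Q_i$ is an inner vertex of $H'_{\ell-2}$, and no inner vertex of $H'$ is adjacent to a vertex of $Q_i\setminus S$. A clique cannot contain vertices from both sides of a separator, so $K\subseteq V(Q_i)$ or $K\subseteq V(H_{\ell-2})$.

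In the first case, $K$ could use edges of $G_n[S]$ that are not in $Q_i$, so I would determine $G_n[S]$ explicitly. Its edges are the six cycle edges, plus $ub_1,ub_2,va_1,va_2$ coming from $Q_i$ or $H$, plus $uv$ if it is present in $Q_i$. Notably $a_1a_2$ and $b_1b_2$ are absent, since $a_1$ and $a_{\ell-2}$ are not adjacent in $H$ when $\ell-2\ge 3$. Thus $G_n$ restricted to $V(Q_i)$ is a subgraph of $Q_i+\{ub_2,va_1\}\subseteq G_{8+i}$, and $G_{8+i}$ is $K_4$-free by Claim~\ref{cl:k4free-0}.

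In the second case, $K$ lies in a subgraph of $H_{\ell-2}+\{ub_1,va_{\ell-2}\}$, possibly with the extra edge $uv$. This graph is $K_4$-free by Claim~\ref{cl:k4free-1}, and I expect adding $uv$ still creates no $K_4$. The reason is that the common neighbours of $u$ and $v$ are the $a_j,b_j$, and two such vertices adjacent to each other together with $u$ and $v$ would need every one of those four edges present. This does occur only at end rungs where the deleted edges were restored, so I would check those rungs directly.

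The main obstacle is this bookkeeping of the edges induced on $S$ and confirming that the identification creates no new $K_4$ across the two pieces.
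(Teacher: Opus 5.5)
Your overall route is the paper's: count vertices and edges, draw $H'_{\ell-2}$ inside the true face $F_i$, and use the separator $S$ to force any $K_4$ into $V(Q_i)$ or $V(H_{\ell-2})$. Your first case is handled more carefully than in the paper. Since $G_n[V(Q_i)]=Q_i+\{ub_2,va_1\}$ and not just $Q_i$, embedding it in $G_{8+i}$ and invoking Claim~\ref{cl:k4free-0} is the right justification.

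The gap is in your second case. You leave open whether $uv\in E(Q_i)$, and you predict that adding $uv$ would create no $K_4$ except possibly at the end rungs. That prediction is false. We have $H_{\ell-2}+\{ub_1,va_{\ell-2}\}=L_{\ell-2}\vee N_2$, so $u$ and $v$ are both adjacent to every $a_j$ and $b_j$. With $uv$ present, every rung $\{u,v,a_j,b_j\}$ and every ladder edge $\{u,v,a_j,a_{j+1}\}$ would span a $K_4$, so your planned rung check would fail. What you need is to show that $uv$ is not an edge of $G_n$, and this is immediate:
\begin{itemize}
\item $uv\notin E(H_{\ell-2})$, because $N_2$ is edgeless.
\item $uv\notin E(G_{8+i})\supseteq E(Q_i)$. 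Indeed, $G_{8+i}$ contains $ua_1,a_1b_1,b_1v,va_1,ub_1$, so $uv$ would complete a $K_4$ on $\{u,a_1,b_1,v\}$, contradicting Claim~\ref{cl:k4free-0}. This can also be read off the figures.
\end{itemize}
With this, $G_n[V(H_{\ell-2})]$ is exactly $L_{\ell-2}\vee N_2$. That graph is $K_4$-free, since a $K_4$ would need either a triangle in the bipartite ladder or the edge $uv$. The paper's proof relies on the same fact implicitly.
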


\begin{proof} 
	Let $\ell\ge 5$ and 
	$n=2\ell+1$.
	Clearly, $G_{n}$  is 
	$1$-plane.

Suppose that $G_{n}$ 
contains a subgraph 
$W$ which is isomorphic to $K_4$.
By the definition of $G_n$, 
it is clear that either 
$V(W)\subseteq V(H_{\ell-2})$
or $V(W)\subseteq V(Q_1)$,
implying that either 
$H_{\ell-2}+\{ub_1,va_{\ell-2}\}$ or $Q_1$ is 
not $K_4$-free,
a contradiction to Claim~\ref{cl:k4free-1}.

Hence $G_n\in \Maxe{n}{4}$.
Similarly, 
$G_n\in \Maxe{n}{4}$ for 
$n=2\ell+2$, where $\ell\ge 5$.

Note that, for $\ell\ge 5$, we have 
\Eqnn{
e(G_{2\ell+1})
&=& e(H_{\ell-2})+e(G_9)-10
= \left(7 (\ell-2)-4\right)+24-10
= 7\ell-4,\\
e(G_{2\ell+2})
&=&e(H_{\ell-2})+e(G_{10})-10
= \left(7(\ell-2)-4\right)+28-10
= 7\ell,
}
implying that 
$e(G_n)=\left\lfloor \frac{7}{2}n \right\rfloor-7$
for all $n\ge 11$.
Thus Claim~\ref{cl:k4free} holds.
\end{proof}

By Proposition~\ref{k4free-2}
and Claims~\ref{cl:k4free-0}
and~\ref{cl:k4free}, 
Theorem~\ref{thm:K4}
holds for $n\ge 9$.
\hfill $\Box$

\begin{remark}
When $2\ell-2 $ is a multiple of $4$, the graph $G_{2\ell}$ coincides with the $3$-partite $1$-planar graph  with $\frac{7}{2}(2\ell) -7=7\ell-7$ edges,  due to Suzuki~\cite{MR4305146}.  Our construction in the proof
of Theorem~\ref{thm:K4} above
is inspired by Suzuki’s example.
\end{remark}

\section{Proof of Theorem \ref{thm:K5}} 

We first prove Theorem \ref{thm:K5}
for $1\le n\le 7$ or $n=9$.

\begin{prop}\label{prop7-1}
For $1\le n\le 7$ or $n=9$,
$\maxe{n}{5}=	
\Floor{\frac {3n^2}{8}}
-3\Floor{\frac n9}$.
\end{prop}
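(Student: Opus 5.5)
For $1\le n\le 7$ the term $3\Floor{n/9}$ vanishes, so the claim is $\maxe{n}{5}=\Floor{3n^2/8}$, and I would argue exactly as in the small cases of Theorem~\ref{thm:K4}. The upper bound is Tur\'an's Theorem with $k=5$: every $K_5$-free graph of order $n$ has at most $e(T_4(n))=\Floor{3n^2/8}$ edges. (For $n\le 4$, $T_4(n)=K_n$ gives $\binom n2$; the cases $n=5,6,7$ give $9,13,18$, which one checks agree with $\Floor{3n^2/8}$.)

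For the matching lower bound I would show that $T_4(n)$ itself is $1$-planar for $n\le 7$. For $n\le 4$ it is $K_n$, which is planar. For $n=5,6$ it is $K_{2,1,1,1}=K_5-e$ and $K_{2,2,1,1}=K_6$ minus a matching of size two; both are subgraphs of $K_6$, which is $1$-planar. For $n=7$ it is $K_{2,2,2,1}$, with $18$ edges. This is below the bound $4\cdot7-9=19$ of Lemma~\ref{lem:edges_1planar}, so that lemma is no obstruction, and the list of $1$-planar complete multipartite graphs in \cite{MR2876333} shows $K_{2,2,2,1}$ is $1$-planar. Alternatively one can exhibit a drawing directly: take a $1$-planar drawing of the octahedron $K_{2,2,2}$ and add an apex vertex adjacent to all six vertices, placing it in a face and routing edges through single crossings.

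For $n=9$ the claim is $\maxe{9}{5}=30-3=27$. Lemma~\ref{lem:edges_1planar} already gives $e(G)\le 4\cdot9-9=27$ for every $1$-planar graph of order $9$, which is the upper bound. (Tur\'an alone gives only $e(T_4(9))=30$, so the $1$-planarity bound is the one that matters here.) The lower bound is the graph in Figure~\ref{fig:two-ex}(b). I need to verify that it has $9$ vertices and $27$ edges (count black and blue edges), that the drawing is $1$-planar (each blue edge crosses exactly one edge), and that it is $K_5$-free.

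The main obstacle is the $K_5$-freeness of this $n=9$ example. Since $27$ edges on $9$ vertices is dense, I would look for a proper $4$-colouring, which immediately forbids $K_5$. Failing that, I would argue that any $K_5$ needs five mutually adjacent vertices of degree at least $4$, and check the few candidate vertex sets around the high-degree vertices. The $n=7$ drawing is routine by comparison, and is covered by the citation.
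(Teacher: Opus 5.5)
Your proposal is correct and follows the paper's proof essentially step for step. For $n\le 7$ it pairs Tur\'an's bound with the $1$-planarity of $T_4(n)$; the paper simply notes $T_4(n)\subseteq K_{2,2,2,1}$, which is $1$-planar by \cite{MR2876333}. For $n=9$ it uses the bound $4n-9$ of Lemma~\ref{lem:edges_1planar} together with the graph of Figure~\ref{fig:two-ex}(b).

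One caution about your verification step: that graph is \emph{not} $4$-colourable, so your colouring check will fail. Its complement has only nine edges, forming a triangle-free graph (an isolated edge plus a $7$-cycle with one chord). Hence every independent set has at most two vertices, and at least $\lceil 9/2\rceil=5$ colours are needed. Use your fallback instead, which is immediate here: the same complement has independence number $1+3=4$, so the graph has clique number $4$ and is $K_5$-free.
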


\begin{proof}
For $1\le n\le 7$,  $T_4(n)$ is a 
subgraph of the complete $4$-partite graph $K_{2,2,2,1}$
which is known to be 1-planar \cite{MR2876333}. Consequently, $T_4(n)$ is $1$-planar, implying that 
$\maxe{n}{5}
=e(T_4(n))=
\left \lfloor \frac {3n^2}{8}\right 
\rfloor$.
 
For $n=9$ and any 
$G\in \Maxe{n}{5}$, 
by Lemma \ref{lem:edges_1planar}, we have $e(G)\le 4n-9=27=
\left \lfloor \frac {3n^2}{8}\right 
\rfloor
-3\Floor{\frac n9}$. 
Moreover, the graph in 
Figure~\ref{fig:two-ex} (b)
belongs to $\Maxe{9}{5}$
with size $27$.
Hence $\maxe{9}{5}=27
=\left \lfloor \frac {3n^2}{8}\right 
\rfloor
-3\Floor{\frac n9}.
$
\end{proof} 

For $n=8$ or $n\ge 10$,
by Lemma \ref{lem:edges_1planar},
any graph $G\in \Maxe{n}{5}$ 
contains at most $4n-8$ edges.
Thus, in order to show that 
$\maxe{n}{5}=4n-8$,  
it suffices to show 
the existence of a graph 
$G_n\in \Maxe{n}{5}$ 
with $e(G_n)=4n-8$.

For $n\in \{8,10,11,13\}$,
let $G_n$ be the graph 
shown in Figure~\ref{fig:K5free-ex}.

\begin{prop}\label{prop7-2}
	For $n\in \{8,10,11,13\}$,
	$G_n\in \Maxe{n}{5}$
	with $e(G_n)=4n-8$.
\end{prop}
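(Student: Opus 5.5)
The statement concerns four explicit graphs $G_8,G_{10},G_{11},G_{13}$ drawn in Figure~\ref{fig:K5free-ex}. For each $n\in\{8,10,11,13\}$ I will verify three properties directly from the drawing: the drawing of $G_n$ is $1$-planar, $e(G_n)=4n-8$, and $G_n$ contains no $K_5$. Once these hold, $G_n\in\Maxe{n}{5}$. Together with the upper bound $4n-8$ from Lemma~\ref{lem:edges_1planar}, this gives exactly the claimed extremal graphs.

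For $1$-planarity and the edge count, I will use the structure of optimal $1$-planar graphs. Each $G_n$ will be drawn as a plane quadrangulation $Q$ on $n$ vertices, which has $2n-4$ edges and $n-2$ faces, with both diagonals drawn inside every quadrangular face. Each diagonal pair crosses exactly once and no other crossings occur, so the drawing is $1$-plane. The edge count is then $(2n-4)+2(n-2)=4n-8$. I will check from the figure that $Q$ is simple, has minimum degree $3$, and that no two diagonals duplicate an edge of $Q$ or each other, so that $G_n$ is simple.

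For $K_5$-freeness, the cleanest route is to exhibit a proper $4$-coloring of each $G_n$, since a $4$-colorable graph cannot contain $K_5$. I will label the vertices in Figure~\ref{fig:K5free-ex} with colors $1,2,3,4$ and check every edge. The structure makes this check systematic. $Q$ is bipartite with classes $X$ and $Y$, and each face $x_1y_1x_2y_2$ has diagonals $x_1x_2$ and $y_1y_2$. So the coloring amounts to giving $X$ colors $\{1,2\}$ and $Y$ colors $\{3,4\}$ so that the two same-side vertices of every face receive different colors. Equivalently, the auxiliary graph on $X$ whose edges are the $X$-diagonals must be bipartite, and likewise for $Y$.

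The main obstacle is that this bipartiteness fails for many quadrangulations. Optimal $1$-planar graphs that are not $4$-colorable exist, and $G_9$ cannot even be optimal, which is why these small cases need hand-picked examples. I therefore expect the real work to lie in the explicit coloring of each figure. If some $G_n$ turns out not to be $4$-colorable, I will instead argue $K_5$-freeness directly. Any $K_5$ has $10$ edges, and I would enumerate the $5$-subsets containing a vertex of degree at most $4$ plus its neighbourhood. The natural place to start is the vertices of degree $6$ in an optimal $1$-planar graph, checking that their neighbourhoods contain no $K_4$. For $n\le 13$, this is a finite check read off the figure.
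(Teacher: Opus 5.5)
Your plan matches the paper's proof, which just says the claim can be verified from Figure~\ref{fig:K5free-ex}. The first half is fine: reading each drawing as a quadrangulation $Q$ with both diagonals in every face gives $1$-planarity and $(2n-4)+2(n-2)=4n-8$ edges. The $K_5$-freeness step fails as you designed it, for two reasons.

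First, colourings with $X\to\{1,2\}$ and $Y\to\{3,4\}$ are only a special case, and here they never exist. Since $\sum_v d_Q(v)=4n-8<4n$, some vertex $v$ has $Q$-degree $3$. If $v\in Y$, the $X$-diagonals of the three faces at $v$ form a triangle on $N_Q(v)$; symmetrically if $v\in X$. For example, $G_8$ is the cube with all face diagonals, i.e.\ $K_{2,2,2,2}$. Its $X$-diagonal graph is $K_4$, yet it is $4$-colourable: up to renaming, its only $4$-colouring uses the antipodal pairs, and each pair meets both sides. Worse, $G_{10}$ is not $4$-colourable at all. Its uncrossed edges form the vertex--face incidence graph of the self-dual wheel $W_4$. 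The two hubs miss only each other, every other vertex misses exactly three vertices, and no three vertices are pairwise non-adjacent. Hence $\alpha(G_{10})=2$ and $\chi(G_{10})\ge 5$. The optimal $1$-planar graphs on $10$ and $11$ vertices are unique, and neither is $4$-colourable. So your fallback is not a contingency; it is the actual proof for these orders.

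Second, the fallback as written is vacuous. Every vertex has degree $2d_Q(v)\ge 6$, so there are no vertices of degree at most $4$ to start from. Here is a clean substitute. The $X$-diagonals and the $Y$-diagonals form plane graphs $D_X$ and $D_Y$. A clique meets $X$ and $Y$ in cliques of $D_X$ and $D_Y$, and its cross pairs are $Q$-edges. Five vertices on one side contradict planarity. Four vertices in $X$ plus some $y\in Y$ is also impossible: the four lie on the cycle $N_Q(y)$ bounding the face of $D_X$ containing $y$, and the two interleaved chords among them would cross outside that face. So a $K_5$ must consist of the two ends of some diagonal plus three common $Q$-neighbours. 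In each drawing, the ends of every diagonal share only the two other corners of its face, which settles $K_5$-freeness.

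Finally, the drawing labelled $G_{11}$ has $13$ vertices and $44$ edges. So $n=11$ cannot be read off the figure, and this affects the paper's proof equally. A valid $G_{11}$ comes from the triangular bipyramid $K_5-e$, with apexes $p,q$ and equator $e_1e_2e_3$. Take its vertex--face incidence graph and add both diagonals in every face; this gives $11$ vertices and $36$ edges. It is $K_5$-free by the criterion above. It is not $4$-colourable: $p$ and $q$ must share a colour, which forces the adjacent face-vertices $pe_ie_{i+1}$ and $qe_ie_{i+1}$ both to take the colour of the third equator vertex.
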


\begin{proof}
It can be verified easily 
from Figure~\ref{fig:K5free-ex}
that for any  $n\in \{8,10,11,13\}$,
$G_n\in \Maxe{n}{5}$
with $e(G_n)=4n-8$.
Thus, Proposition~\ref{prop7-2}
holds.
\end{proof}

\begin{figure}[H]
	\centering
	
	\begin{subfigure}[b]{0.24\textwidth}
		\centering
		\scalebox{0.52}{
			\begin{tikzpicture}
				\begin{pgfonlayer}{nodelayer}
					\node [style=whitenode] (0) at (-2, 2) {};
					\node [style=whitenode] (1) at (2, 2) {};
					\node [style=whitenode] (2) at (-2, -2) {};
					\node [style=whitenode] (3) at (2, -2) {};
					\node [style=whitenode] (4) at (2, 2) {};
					\node [style=whitenode] (5) at (-1, 1) {};
					\node [style=whitenode] (6) at (1, 1) {};
					\node [style=whitenode] (7) at (-1, -1) {};
					\node [style=whitenode] (8) at (1, -1) {};
				\end{pgfonlayer}
				\begin{pgfonlayer}{edgelayer}
					\draw [style=blackedge] (0) to (4);
					\draw [style=blackedge] (4) to (3);
					\draw [style=blackedge] (3) to (2);
					\draw [style=blackedge] (2) to (0);
					\draw [style=blackedge] (0) to (5);
					\draw [style=blackedge] (5) to (6);
					\draw [style=blackedge] (6) to (4);
					\draw [style=blackedge] (5) to (7);
					\draw [style=blackedge] (7) to (2);
					\draw [style=blackedge] (7) to (8);
					\draw [style=blackedge] (8) to (3);
					\draw [style=blackedge] (8) to (6);
					\draw [style=blueedge] (0) to (6);
					\draw [style=blueedge] (4) to (5);
					\draw [style=blueedge] (5) to (8);
					\draw [style=blueedge] (6) to (7);
					\draw [style=blueedge] (7) to (3);
					\draw [style=blueedge] (8) to (2);
					\draw [style=blueedge] (2) to (5);
					\draw [style=blueedge] (7) to (0);
					\draw [style=blueedge] (6) to (3);
					\draw [style=blueedge] (8) to (4);
					\draw [style=blueedge, bend left=75, looseness=2.00] (2) to (4);
					\draw [style=blueedge, bend left=75, looseness=2.00] (0) to (3);
				\end{pgfonlayer}
			\end{tikzpicture}
		}
		\caption{$G_8$}
	\end{subfigure}\hfill
	\begin{subfigure}[b]{0.24\textwidth}
		\centering
		\scalebox{0.52}{
			\begin{tikzpicture}
				\begin{pgfonlayer}{nodelayer}
					\node [style=whitenode] (1) at (-2, 3) {};
					\node [style=whitenode] (2) at (-2, -1) {};
					\node [style=whitenode] (3) at (2, 3) {};
					\node [style=whitenode] (4) at (-1.06194, 2.00001) {};
					\node [style=whitenode] (5) at (-1.0902, 0.50001) {};
					\node [style=whitenode] (6) at (0.0098, -0.02499) {};
					\node [style=whitenode] (7) at (2.025, -1.025) {};
					\node [style=whitenode] (8) at (1.05436, 2.00001) {};
					\node [style=whitenode] (9) at (0.04132, 0.825008) {};
					\node [style=whitenode] (10) at (1.01305, 1e-05) {};
				\end{pgfonlayer}
				\begin{pgfonlayer}{edgelayer}
					\draw (1) to (2);
					\draw (1) to (3);
					\draw (1) to (4);
					\draw (2) to (5);
					\draw (4) to (5);
					\draw (2) to (6);
					\draw (2) to (7);
					\draw (3) to (7);
					\draw (3) to (8);
					\draw (4) to (8);
					\draw (5) to (9);
					\draw (6) to (9);
					\draw (8) to (9);
					\draw (6) to (10);
					\draw (7) to (10);
					\draw (8) to (10);
					\draw [style=blueedge] (1) to (5);
					\draw [style=blueedge] (4) to (2);
					\draw [style=blueedge] (2) to (9);
					\draw [style=blueedge] (5) to (6);
					\draw [style=blueedge] (5) to (8);
					\draw [style=blueedge] (4) to (9);
					\draw [style=blueedge] (9) to (10);
					\draw [style=blueedge] (8) to (6);
					\draw [style=blueedge] (1) to (8);
					\draw [style=blueedge] (3) to (4);
					\draw [style=blueedge] (8) to (7);
					\draw [style=blueedge] (3) to (10);
					\draw [style=blueedge] (2) to (10);
					\draw [style=blueedge] (6) to (7);
					\draw [style=blueedge, bend left=75,looseness=2.00] (2) to (3);
					\draw [style=blueedge, bend left=75,looseness=2.00] (1) to (7);
				\end{pgfonlayer}
			\end{tikzpicture}
		}
		\caption{$G_{10}$}
	\end{subfigure}\hfill
	\begin{subfigure}[b]{0.24\textwidth}
		\centering
		\scalebox{0.52}{
			\begin{tikzpicture}[scale=1.0]
				\begin{pgfonlayer}{nodelayer}
					\node [style=whitenode] (1) at (2, 2) {};
					\node [style=whitenode] (2) at (2, -2) {};
					\node [style=whitenode] (3) at (-2, 2) {};
					\node [style=whitenode] (4) at (1.19967, 1.23202) {};
					\node [style=whitenode] (5) at (1.05372, 0.31785) {};
					\node [style=whitenode] (6) at (0.28551, -0.281418) {};
					\node [style=whitenode] (7) at (0.02048, -1.21748) {};
					\node [style=whitenode] (8) at (-2, -2) {};
					\node [style=whitenode] (9) at (-1.27118, -0.24248) {};
					\node [style=whitenode] (10) at (-1.0797, 1.25319) {};
					\node [style=whitenode] (11) at (0.21151, 0.24654) {};
					\node [style=whitenode] (12) at (-0.47997, -0.290799) {};
					\node [style=whitenode] (13) at (-1.30857, -1.16166) {};
				\end{pgfonlayer}
				\begin{pgfonlayer}{edgelayer}
					\draw (1) to (2);
					\draw (1) to (3);
					\draw (1) to (4);
					\draw (2) to (5);
					\draw (4) to (5);
					\draw (2) to (6);
					\draw (2) to (7);
					\draw (2) to (8);
					\draw (3) to (8);
					\draw (3) to (9);
					\draw (3) to (10);
					\draw (4) to (10);
					\draw (5) to (11);
					\draw (6) to (11);
					\draw (10) to (11);
					\draw (6) to (12);
					\draw (7) to (12);
					\draw (9) to (12);
					\draw (10) to (12);
					\draw (7) to (13);
					\draw (8) to (13);
					\draw (9) to (13);
					\draw [style=blueedge] (3) to (4);
					\draw [style=blueedge] (1) to (10);
					\draw [style=blueedge] (3) to (12);
					\draw [style=blueedge] (10) to (9);
					\draw [style=blueedge] (3) to (13);
					\draw [style=blueedge] (9) to (8);
					\draw [style=blueedge] (13) to (12);
					\draw [style=blueedge] (9) to (7);
					\draw [style=blueedge] (7) to (8);
					\draw [style=blueedge] (13) to (2);
					\draw [style=blueedge] (2) to (12);
					\draw [style=blueedge] (7) to (6);
					\draw [style=blueedge] (12) to (11);
					\draw [style=blueedge] (10) to (6);
					\draw [style=blueedge] (4) to (11);
					\draw [style=blueedge] (10) to (5);
					\draw [style=blueedge] (11) to (2);
					\draw [style=blueedge] (5) to (6);
					\draw [style=blueedge] (5) to (1);
					\draw [style=blueedge] (4) to (2);
					\draw [style=blueedge, bend left=75,looseness=2.00] (8) to (1);
					\draw [style=blueedge, bend left=75,looseness=2.00] (3) to (2);
				\end{pgfonlayer}
			\end{tikzpicture}
		}
		\caption{$G_{11}$}
	\end{subfigure}\hfill
	\begin{subfigure}[b]{0.24\textwidth}
		\centering
		\scalebox{0.52}{
			\begin{tikzpicture}[scale=1.0]
				\begin{pgfonlayer}{nodelayer}
					\node [style=whitenode] (1) at (2, 2) {};
					\node [style=whitenode] (2) at (2, -2) {};
					\node [style=whitenode] (3) at (-2, 2) {};
					\node [style=whitenode] (4) at (1.25, 1.00002) {};
					\node [style=whitenode] (5) at (1.28571, -0.49998) {};
					\node [style=whitenode] (6) at (0.23571, -1.32498) {};
					\node [style=whitenode] (7) at (-2, -2) {};
					\node [style=whitenode] (8) at (-1.28571, 0.500019) {};
					\node [style=whitenode] (9) at (-0.53571, 1.25002) {};
					\node [style=whitenode] (10) at (0.025, 2.82055e-05) {};
					\node [style=whitenode] (11) at (0.53214, -0.574978) {};
					\node [style=whitenode] (12) at (-1.25, -1.24998) {};
					\node [style=whitenode] (13) at (-0.60714, 0.500022) {};
				\end{pgfonlayer}
				\begin{pgfonlayer}{edgelayer}
					\draw (1) to (2);
					\draw (1) to (3);
					\draw (1) to (4);
					\draw (2) to (5);
					\draw (4) to (5);
					\draw (2) to (6);
					\draw (2) to (7);
					\draw (3) to (7);
					\draw (3) to (8);
					\draw (3) to (9);
					\draw (4) to (9);
					\draw (4) to (10);
					\draw (5) to (11);
					\draw (6) to (11);
					\draw (10) to (11);
					\draw (6) to (12);
					\draw (7) to (12);
					\draw (8) to (12);
					\draw (10) to (12);
					\draw (8) to (13);
					\draw (9) to (13);
					\draw (10) to (13);
					\draw [style=blueedge] (3) to (13);
					\draw [style=blueedge] (9) to (8);
					\draw [style=blueedge] (8) to (10);
					\draw [style=blueedge] (13) to (12);
					\draw [style=blueedge] (12) to (11);
					\draw [style=blueedge] (10) to (6);
					\draw [style=blueedge] (10) to (5);
					\draw [style=blueedge] (4) to (11);
					\draw [style=blueedge] (13) to (4);
					\draw [style=blueedge] (9) to (10);
					\draw [style=blueedge] (3) to (4);
					\draw [style=blueedge] (9) to (1);
					\draw [style=blueedge] (6) to (7);
					\draw [style=blueedge] (7) to (8);
					\draw [style=blueedge] (3) to (12);
					\draw [style=blueedge] (6) to (5);
					\draw [style=blueedge] (11) to (2);
					\draw [style=blueedge] (2) to (4);
					\draw [style=blueedge] (1) to (5);
					\draw [style=blueedge, bend left=75, looseness=2.00] (7) to (1);
					\draw [style=blueedge, bend left=75, looseness=2.00] (3) to (2);
					\draw [style=blueedge] (12) to (2);
				\end{pgfonlayer}
			\end{tikzpicture}
		}
		\caption{$G_{13}$}
	\end{subfigure}
	
	\caption{$G_n\in \Maxe{n}{5}$
		for $n= 8, 10, 11$, and $13$.}
	\label{fig:K5free-ex}
\end{figure}

Now we are going to apply the $Q_4$-addition introduced in~\cite{MR2746706}
to create graphs in 
$\Maxe{n}{5}$ 
from  graphs in $\Maxe{n-4}{5}$
for $n=12$ and $n\ge 14$.

Let $G$ be a $1$-plane graph.
We say $G'$ is obtained from 
$G$ by a $Q_4$-addition 
if $G$ contains a clique 
$\{u_1,u_2,u_3, u_4\}$
such that edges $u_1u_3$ 
and $u_2u_4$ 
cross each other in $G$,
and $G'$ is obtained by 
the following operations:
\begin{enumerate}
\item[(i)] remove edges $u_1u_3$ 
and $u_2u_4$ from $G$, 
and obtain a true face $F$
bounded by the cycle
$u_1u_2u_3u_4u_1$ 
in the resulting 
$1$-plane graph $G_0:=G-\{u_1u_3,
u_2u_4\}$;
and 

\item[(ii)] insert 
a $4$-cycle 
$v_1v_2v_3v_4v_1$ within $F$, 
where $v_1,v_2,v_3$ and $v_4$
are new vertices to $G_0$, 
and add new edges 
$u_iv_{i-1}, u_iv_i, u_iv_{i+1}$
for each $i=1,2,3,4$, 
where $v_0$ and $v_5$ 
are actually $v_4$ and $v_1$,
respectively, 
as shown in 
Figure~\ref{fig:add} (b).
\end{enumerate}

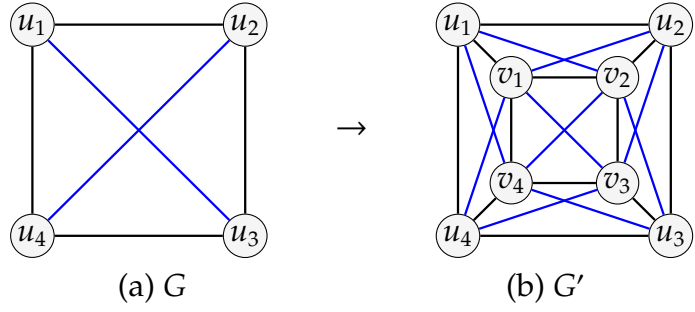
\begin{figure}[h!]
	\centering
	\begin{tikzpicture}[scale=0.7]
		\begin{pgfonlayer}{nodelayer}
			\node [style=whitenode] (12) at (2, 4) {$u_1$};
			\node [style=whitenode] (13) at (6, 4) {$u_2$};
			\node [style=whitenode] (14) at (2, 0) {$u_4$};
			\node [style=whitenode] (15) at (6, 0) {$u_3$};
			\node [style=whitenode] (16) at (10, 4) {$u_1$};
			\node [style=whitenode] (17) at (14, 4) {$u_2$};
			\node [style=whitenode] (18) at (10, 0) {$u_4$};
			\node [style=whitenode] (19) at (14, 0) {$u_3$};
			\node [style=whitenode] (20) at (11, 3) {$v_1$};
			\node [style=whitenode] (21) at (13, 3) {$v_2$};
			\node [style=whitenode] (22) at (11, 1) {$v_4$};
			\node [style=whitenode] (23) at (13, 1) {$v_3$};
			\node [style=none] (25) at (8, 2) {$\rightarrow$};

		\end{pgfonlayer}
		\begin{pgfonlayer}{edgelayer}

			\draw [style=blackedge] (12) to (13);
			\draw [style=blackedge] (13) to (15);
			\draw [style=blackedge] (15) to (14);
			\draw [style=blackedge] (14) to (12);
			\draw [style=blackedge] (16) to (17);
			\draw [style=blackedge] (17) to (19);
			\draw [style=blackedge] (19) to (18);
			\draw [style=blackedge] (18) to (16);
			\draw [style=blackedge] (20) to (21);
			\draw [style=blackedge] (21) to (23);
			\draw [style=blackedge] (23) to (22);
			\draw [style=blackedge] (22) to (20);
			\draw [style=blackedge] (16) to (20);
			\draw [style=blackedge] (22) to (18);
			\draw [style=blackedge] (23) to (19);
			\draw [style=blackedge] (21) to (17);
			\draw [style=blueedge] (13) to (14);
			\draw [style=blueedge] (12) to (15);
			\draw [style=blueedge] (16) to (22);
			\draw [style=blueedge] (18) to (20);
			\draw [style=blueedge] (20) to (17);
			\draw [style=blueedge] (21) to (16);
			\draw [style=blueedge] (22) to (21);
			\draw [style=blueedge] (20) to (23);
			\draw [style=blueedge] (23) to (18);
			\draw [style=blueedge] (22) to (19);
			\draw [style=blueedge] (19) to (21);
			\draw [style=blueedge] (17) to (23);
		\end{pgfonlayer}
	\end{tikzpicture}

(a) $G$ \hspace{4 cm} (b) $G'$
	
	\caption{A new $1$-plane graph obtained 
		by a  $Q_4$-addition
	}
	\label{fig:add}
\end{figure}

\begin{prop}\label{prop:opc}
Let $G$ be a $1$-plane graph.
If $G'$ is obtained from $G$ 
by a $Q_4$-addition, 
then 
\begin{enumerate}
\item[(i)] $G'$ is $1$-plane
with 
$e(G')=e(G)+16$
and 
$G'$ has a subgraph $K_4$
in which two edges cross
each other, 
and 
\item[(ii)] if $G\in \Maxe{n}{5}$, 
then $G'\in \Maxe{n+4}{5}$.
\end{enumerate} 
\end{prop}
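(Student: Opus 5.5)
Part (i) is a direct check of the local drawing in Figure~\ref{fig:add}(b). Since $u_1u_3$ and $u_2u_4$ cross only each other, deleting them leaves the other edges' crossings untouched, and the region bounded by $u_1u_2u_3u_4u_1$ becomes a true face $F$ of $G_0$ containing no other part of the drawing. Inside $F$ we draw the $4$-cycle $v_1v_2v_3v_4$ and the edges $u_iv_i$ without crossings. The crossed pairs are then
\[
u_1v_2 \times u_2v_1,\quad u_2v_3\times u_3v_2,\quad u_3v_4\times u_4v_3,\quad u_4v_1\times u_1v_4,\quad v_1v_3\times v_2v_4 .
\]
Here we add the diagonals $v_1v_3$ and $v_2v_4$ in the central quadrilateral; the figure shows them, and they are needed for the count. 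Each new edge is crossed at most once and no old edge is affected, so $G'$ is $1$-plane. For the edge count, we remove $2$ edges and add $4$ (the cycle), $4$ (the $u_iv_i$), $8$ (the $u_iv_{i\pm1}$) and $2$ (the diagonals), for $18-2=16$ new edges. The required crossed $K_4$ is $\{v_1,v_2,v_3,v_4\}$: its edges are the cycle edges together with $v_1v_3$ and $v_2v_4$, and the two diagonals cross each other.

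For (ii), suppose $G'$ contained a $K_5$ on a vertex set $S$. Since $G_0\subseteq G$ and $G$ is $K_5$-free, $S$ must contain some $v_j$. Each $v_j$ has neighbourhood contained in $\{v_1,\dots,v_4\}\cup\{u_1,\dots,u_4\}$, so $S\subseteq\{u_i\}\cup\{v_i\}$.

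Now check the adjacencies in $G'$. The $u_iu_{i+1}$ are edges, but $u_1u_3$ and $u_2u_4$ are deleted. Each $u_i$ is adjacent to $v_{i-1},v_i,v_{i+1}$ but not to $v_{i+2}$. The $v$'s form a $K_4$. Thus the complement of $G'[\{u_i\}\cup\{v_i\}]$ contains the perfect matching
\[
u_1u_3,\quad u_2u_4,\quad u_1v_3,\ u_3v_1,\quad u_2v_4,\ u_4v_2,
\]
and it consists exactly of the $4$-cycle $u_1v_3u_3v_1u_1$ and the $4$-cycle $u_2v_4u_4v_2u_2$. A clique is an independent set in this complement, which is a disjoint union of two $C_4$'s. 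Each $C_4$ has independence number $2$, so the clique number is at most $4$, which contradicts the existence of $K_5$. Hence $G'$ is $K_5$-free, has order $n+4$, and is $1$-planar, so $G'\in\Maxe{n+4}{5}$.

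The main obstacle is being careful about which new edges are present, in particular the central diagonals, so that the count $16$ and the complement structure are both correct. Once the complement is identified as $2C_4$, the $K_5$-freeness is immediate.
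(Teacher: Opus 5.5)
Your part (i) is fine. You are right that the diagonals $v_1v_3$ and $v_2v_4$, which appear in Figure~\ref{fig:add}(b) but not in the verbal definition, are needed for the count $+16$ and for the crossed $K_4$ on $\{v_1,\dots,v_4\}$. One small correction: that $u_1u_2u_3u_4u_1$ bounds an empty true face after deleting $u_1u_3,u_2u_4$ is part of the definition of the operation. It does not follow from these two edges crossing only each other.

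In (ii) there is an error, though an easily repaired one. Your six non-adjacent pairs $u_1u_3,u_2u_4,u_1v_3,u_3v_1,u_2v_4,u_4v_2$ are correct. But they do not form a perfect matching, since $u_1u_3$ and $u_1v_3$ share $u_1$. The complement is also not $2C_4$: your cycle $u_1v_3u_3v_1u_1$ uses $u_3v_3$ and $v_1u_1$, which are edges of $G'$. Because $u_1v_1$, $u_3v_3$ and the diagonal $v_1v_3$ all lie in $G'$, the complement on $\{u_1,u_3,v_1,v_3\}$ is the path $v_3u_1u_3v_1$. The whole complement is therefore $2P_4$, the other path being $v_4u_2u_4v_2$.

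The conclusion still holds, because $\alpha(P_4)=2$. More simply, $\{u_1v_3,\,u_2v_4,\,u_3v_1,\,u_4v_2\}$ is a genuine perfect matching of the complement, since each $u_i$ misses exactly $v_{i+2}$. A clique therefore contains at most one vertex from each pair, hence at most $4$ vertices. With this fix your proof is complete, and it does not even need $u_1u_3,u_2u_4\notin E(G')$.

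This is the same local check as the paper's. The paper writes $W=U'\cup V'$ and verifies that $|U'|$ plus the number of common $v$-neighbours of $U'$ is always $4$. That is exactly the statement that distinct $u_i$ exclude distinct $v_{i+2}$; the matching formulation just packages the paper's case check into one line.
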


\begin{proof}
(i) follows from the definition directly.

(ii). It suffices to show that 
$G'$ is $K_5$-free.
Suppose that $W$ is a $5$-clique in $G'$.
Since $G\in \Maxe{n}{5}$, 
$W\cap \{v_1, v_2, v_3,v_4\}\ne \emptyset$.
Since $N_{G'}(\{v_1, v_2, v_3,v_4\})=\{u_1, u_2, u_3,u_4\}$,
$W$ is a subset of $
\{v_1, v_2, v_3,v_4\}
\cup \{u_1, u_2, u_3,u_4\}$.
Assume that $W=U'\cup V'$,
where $
U'= W\cap \{u_1, u_2, u_3,u_4\}$
and $V'=W\cap \{v_1, v_2, v_3,v_4\}$.
Since $W$ is a clique of $G'$, we have 
$$
|W|\le |U'|
+
\left | 
\bigcap_{u_i\in U'}
\left (N_{G'}(u_i)\cap 
 \{v_1, v_2, v_3,v_4\}\right )
\right |.
$$
However, it can be verified that 
$$
|U'|
+
\left | 
\bigcap_{u_i\in U'}
\left (N_{G'}(u_i)\cap 
 \{v_1, v_2, v_3,v_4\}
 \right )
\right |
=4,
$$
implying that $|W|\le 4$,
a contradiction. 
\end{proof}

We are going to complete the 
proof of Theorem \ref{thm:K5}.

\vspace{3 mm}

\noindent{\it Proof of Theorem \ref{thm:K5}}: 
By Lemma \ref{lem:edges_1planar}
and Propositions~\ref{prop7-1}
and~\ref{prop7-2}, 
the theorem holds 
for $n\in \{1, 2, \cdots, 13\}\setminus \{12\}$.
For $n=12$ or $n\ge 14$, 
by Lemma \ref{lem:edges_1planar} again, 
it suffices to show the existence 
of a graph $G_n$ 
in $\Maxe{n}{5}$ 
with $e(G_n)=4n-8$.

By Propositions~\ref{prop7-2}, 
for $n\in \{8,10, 11,13\}$, 
$G_n$ is the graph defined 
in Figure~\ref{fig:K5free-ex}
with the properties that 
$G_n\in \Maxe{n}{5}$,
$e(G_n)=4n-8$, and 
$G_n$ contains a subgraph 
$K_4$ in which two edges cross
each other.

Now, we define $G_n$ 
for $n=12$ and $n\ge 14$
recursively as follows:
let $G_n$ be the graph 
obtained from $G_{n-4}$ 
by a $Q_4$-addition. 
By Proposition~\ref{prop:opc},
$G_n\in \Maxe{n}{5}$ 
with 
$$
e(G_n)=e(G_{n-4})+16
=4(n-4)-8+16
=4n-8.
$$
Thus, the theorem holds.
\hfill $\Box$

\section{Further study}

Theorem~\ref{thm:K3} shows that for $n\ge 8$, 
$\maxe{n}{3}\le 3n-8$,
where equality holds when $n$ is even.
For odd $n\ge 9$,  it is unknown 
whether $\maxe{n}{3}=3n-8$
holds, 
although Karpov~\cite{zbMATH06347737}
has constructed bipartite graphs $G$ in $\Maxe{n}{3}$ 
with $e(G)= 3n-9$
for all odd $n\ge 5$.
It is natural for us to propose 
the following conjecture 
on $\maxe{n}{3}$ for odd $n$.

\begin{conj}
	\label{conj:odd-tight}
	For any odd $n\ge 5$, 
	$\maxe{n}{3}=3n-9$.
\end{conj}

Even if Conjecture~\ref{conj:odd-tight}
holds, 
 it still remains unknown whether there exists a non-bipartite graph $G \in \Maxe{n}{3}$ with $e(G) = 3n - 8$ for even $n \ge 8$, or $e(G) = 3n - 9$ for odd $n \ge 5$. This naturally leads to the following problem.

\begin{prob}
	\label{conj:bipartite}
Is it true that for any $n\ge 8$,
if $e(G)=\maxe{n}{3}$
for $G\in \Maxe{n}{3}$, 
then  $G$ is bipartite?
\end{prob}


There is a non-bipartite
graph $G\in \Maxe{n}{3}$,
where $n=10$,
such that $e(G)=3n-10$
(see Figure \ref{fig:3n-10}).
However, it is unknown 
if there is a graph 
$G\in \Maxe{n}{3}$,
where $n\ge 10$ is even, 
such that $e(G)=3n-9$.

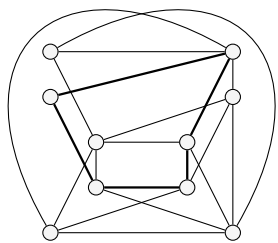
\begin{figure}[H]
\centering
\begin{tikzpicture}[scale=0.6]
	\begin{pgfonlayer}{nodelayer}
		\node [style=whitenode] (0) at (-1, 1) {};
		\node [style=whitenode] (3) at (-2, 0) {};
		\node [style=whitenode] (6) at (-2, 4) {};
		\node [style=whitenode] (8) at (2, 4) {};
		\node [style=whitenode] (11) at (2, 3) {};
		\node [style=whitenode] (12) at (2, 0) {};
		\node [style=whitenode] (16) at (-2, 3) {};
		\node [style=whitenode] (18) at (-1, 2) {};
		\node [style=whitenode] (19) at (1, 2) {};
		\node [style=whitenode] (20) at (1, 1) {};
	\end{pgfonlayer}
	\begin{pgfonlayer}{edgelayer}
		\draw (6) to (8);
		\draw (18) to (19);
		\draw (3) to (12);
		\draw (8) to (11);
		\draw (11) to (12);
		\draw (0) to (18);
		\draw (6) to (18);
		\draw (18) to (3);
		\draw (11) to (18);
		\draw (19) to (12);
		\draw (11) to (20);
		\draw (0) to (12);
		\draw (20) to (3);
		\draw [bend right=75, looseness=2.00] (8) to (3);
		\draw [bend left=75, looseness=2.00] (6) to (12);
		\draw [style=blackedge] (8) to (19);
		\draw [style=blackedge] (19) to (20);
		\draw [style=blackedge] (20) to (0);
		\draw [style=blackedge] (16) to (0);
		\draw [style=blackedge] (8) to (16);
	\end{pgfonlayer}
\end{tikzpicture}

\caption{A non-bipartite
	graph in $\Maxe{n}{3}$ 
	with size $3n-10$ for $n=10$.}
\label{fig:3n-10}
\end{figure}

More generally, it would be interesting to further investigate upper bounds on the number of edges in $1$-planar graphs forbidding other subgraphs. For example, as mentioned by Bekos et al. \cite{Bekos2025}, for $C_k$ with $k\ge 4$, can one obtain (tight) upper bounds? It is interesting  to study the Tur\'an problems on $1$-planar graphs forbidding paths, theta graphs, and other subgraphs.

%
%
%
%
%
%
%
%
%

\section*{Acknowledgements}
We are grateful to Prof.  Pat Morin and Mikhail Kabenyuk for  early discussions with the first author.
The authors declare that they have no conflicts of interest. This research was supported by grants from the National Natural Science Foundation of China (Grant Nos. 12271157, 12371346).

%

\begin{bibdiv}
\begin{biblist}
\bib{MR4010251}{article}{
   author={Ackerman, Eyal},
   title={On topological graphs with at most four crossings per edge},
   journal={Comput. Geom.},
   volume={85},
   date={2019},
   pages={101574, 31},
   issn={0925-7721},
   review={\MR{4010251}},
   doi={10.1016/j.comgeo.2019.101574},
}
\bib{Bekos2025}{article}{
   author={Bekos, Michael A.},
   author={Bose, Prosenjit},
   author={B{\"u}ngener, Alexander},
   author={Dujmovi{\'c}, Vida},
   author={Hoffmann, Michael},
   author={Kaufmann, Michael},
   author={Morin, Pat},
   author={Odak, Svenja},
   author={Weinberger, Angelika},
   title={On $k$-planar graphs without short cycles},
   journal={J. Graph Algorithms Appl.},
   volume={29},
   date={2025},
   pages={1--22},
   issn={1526-1719},
   doi={10.7155/jgaa.v29i3.3003},
}



\bib{MR0732806}{article}{
   author={Bodendiek, R.},
   author={Schumacher, H.},
   author={Wagner, K.},
   title={Bemerkungen zu einem Sechsfarbenproblem von G. Ringel},
   language={German},
   journal={Abh. Math. Sem. Univ. Hamburg},
   volume={53},
   date={1983},
   pages={41--52},
   issn={0025-5858},
   review={\MR{0732806}},
   doi={10.1007/BF02941309},
}
\bib{MR3067240}{article}{
   author={Brandenburg, Franz J.},
   author={Eppstein, David},
   author={Glei\ss ner, Andreas},
   author={Goodrich, Michael T.},
   author={Hanauer, Kathrin},
   author={Reislhuber, Josef},
   title={On the density of maximal 1-planar graphs},
   conference={
      title={Graph drawing},
   },
   book={
      series={Lecture Notes in Comput. Sci.},
      volume={7704},
      publisher={Springer, Heidelberg},
   },
   isbn={978-3-642-36763-2},
   isbn={978-3-642-36762-5},
   date={2013},
   pages={327--338},
   review={\MR{3067240}},
   doi={10.1007/978-3-642-36763-2\_29},
}





\bib{MR2876333}{article}{
   author={Czap, J\'ulius},
   author={Hud\'ak, D\'avid},
   title={1-planarity of complete multipartite graphs},
   journal={Discrete Appl. Math.},
   volume={160},
   date={2012},
   number={4-5},
   pages={505--512},
   issn={0166-218X},
   review={\MR{2876333}},
   doi={10.1016/j.dam.2011.11.014},
}

\bib{MR3084596}{article}{
   author={Czap, J\'ulius},
   author={Hud\'ak, D\'avid},
   title={On drawings and decompositions of 1-planar graphs},
   journal={Electron. J. Combin.},
   volume={20},
   date={2013},
   number={2},
   pages={Paper 54, 8},
   review={\MR{3084596}},
   doi={10.37236/2392},
}



\bib{MR4874150}{book}{
   author={Diestel, Reinhard},
   title={Graph theory},
   series={Graduate Texts in Mathematics},
   volume={173},
   edition={6},
   publisher={Springer, Berlin},
   date={[2025] \copyright 2025},
   pages={xx+454},
   isbn={978-3-662-70106-5},
   isbn={978-3-662-70107-2},
   review={\MR{4874150}},
}

\bib{MR3549506}{article}{
   author={Dowden, Chris},
   title={Extremal $C_4$-free/$C_5$-free planar graphs},
   journal={J. Graph Theory},
   volume={83},
   date={2016},
   number={3},
   pages={213--230},
   issn={0364-9024},
   review={\MR{3549506}},
   doi={10.1002/jgt.21991},
}
\bib{MR0018807}{article}{
   author={Erd\H{o}s, P.},
   author={Stone, A. H.},
   title={On the structure of linear graphs},
   journal={Bull. Amer. Math. Soc.},
   volume={52},
   date={1946},
   pages={1087--1091},
   issn={0002-9904},
   review={\MR{0018807}},
   doi={10.1090/S0002-9904-1946-08715-7},
}

\bib{MR2297168}{article}{
   author={Fabrici, Igor},
   author={Madaras, Tom\'a\v s},
   title={The structure of 1-planar graphs},
   journal={Discrete Math.},
   volume={307},
   date={2007},
   number={7-8},
   pages={854--865},
   issn={0012-365X},
   review={\MR{2297168}},
   doi={10.1016/j.disc.2005.11.056},
}

\bib{MR5037331}{article}{
   author={Gerbner, D\'aniel},
   author={Palmer, Cory},
   title={Survey of Generalized Tur\'an Problems --- Counting Subgraphs},
   journal={Electron. J. Combin.},
   volume={DS27},
   date={2026},
   pages={Paper No. DS27},
   review={\MR{5037331}},
   doi={10.37236/14563},
}

\bib{MR0898434}{book}{
   author={Gross, Jonathan L.},
   author={Tucker, Thomas W.},
   title={Topological graph theory},
   series={Wiley-Interscience Series in Discrete Mathematics and
   Optimization},
   note={A Wiley-Interscience Publication},
   publisher={John Wiley \& Sons, Inc., New York},
   date={1987},
   pages={xvi+351},
   isbn={0-471-04926-3},
   review={\MR{0898434}},
}

\bib{MR4474377}{article}{
   author={Ghosh, Debarun},
   author={Gy\H ori, Ervin},
   author={Martin, Ryan R.},
   author={Paulos, Addisu},
   author={Xiao, Chuanqi},
   title={Planar Tur\'an number of the 6-cycle},
   journal={SIAM J. Discrete Math.},
   volume={36},
   date={2022},
   number={3},
   pages={2028--2050},
   issn={0895-4801},
   review={\MR{4474377}},
   doi={10.1137/21M140657X},
}



\bib{gyori2023}{article}{
   author={Gy\H{o}ri, Ervin},
   author={Li, Alan},
   author={Zhou, Runtian},
   title={The planar Tur\'an number of the seven-cycle},
   journal={arXiv preprint},
   date={2023},
   eprint={2307.06909},
   archivePrefix={arXiv},
   primaryClass={math.CO},
}



\bib{MR2802062}{article}{
   author={Hud\'ak, D\'avid},
   author={Madaras, Tom\'a\v s},
   title={On local properties of 1-planar graphs with high minimum degree},
   journal={Ars Math. Contemp.},
   volume={4},
   date={2011},
   number={2},
   pages={245--254},
   issn={1855-3966},
   review={\MR{2802062}},
   doi={10.26493/1855-3974.131.91c},
}


\bib{zbMATH06347737}{article}{
  author={Karpov, D. V.},
  title={An upper bound on the number of edges in an almost planar bipartite graph},
  journal={J. Math. Sci. (New York)},
  volume={196},
  date={2014},
  number={6},
  pages={737--746},
  issn={1072-3374},
  doi={10.1007/s10958-014-1690-9},
  keywords={05C10, 05C62}
}
\bib{MR4821240}{article}{
   author={Kaufmann, Michael},
   author={Klemz, Boris},
   author={Knorr, Kristin},
   author={Reddy, Meghana M.},
   author={Schr\"oder, Felix},
   author={Ueckerdt, Torsten},
   title={The density formula: one lemma to bound them all},
   conference={
      title={32nd International Symposium on Graph Drawing and Network
      Visualization},
   },
   book={
      series={LIPIcs. Leibniz Int. Proc. Inform.},
      volume={320},
      publisher={Schloss Dagstuhl. Leibniz-Zent. Inform., Wadern},
   },
   isbn={978-3-95977-343-0},
   date={2024},
   pages={Art. No. 7, 17},
   review={\MR{4821240}},
   doi={10.4230/lipics.gd.2024.7},
}

\bib{MR2866732}{article}{
   author={Keevash, Peter},
   title={Hypergraph Tur\'an problems},
   conference={
      title={Surveys in combinatorics 2011},
   },
   book={
      series={London Math. Soc. Lecture Note Ser.},
      volume={392},
      publisher={Cambridge Univ. Press, Cambridge},
   },
   isbn={978-1-107-60109-3},
   date={2011},
   pages={83--139},
   review={\MR{2866732}},

}
%
\bib{MR3990020}{article}{
   author={Lan, Yongxin},
   author={Shi, Yongtang},
   author={Song, Zi-Xia},
   title={Extremal theta-free planar graphs},
   journal={Discrete Math.},
   volume={342},
   date={2019},
   number={12},
   pages={111610, 8},
   issn={0012-365X},
   review={\MR{3990020}},
   doi={10.1016/j.disc.2019.111610},
}






\bib{MR4357025}{article}{
   author={Lan, Yongxin},
   author={Shi, Yongtang},
   author={Song, Zi-Xia},
   title={Planar Tur\'an number and planar anti-Ramsey number of graphs},
   language={English},
   journal={Oper. Res. Trans.},
   volume={25},
   date={2021},
   number={3},
   pages={200--216},
   issn={1007-6093},
   review={\MR{4357025}},
}

\bib{MR3502764}{article}{
   author={Nakamoto, Atsuhiro},
   author={Noguchi, Kenta},
   author={Ozeki, Kenta},
   title={Cyclic 4-colorings of graphs on surfaces},
   journal={J. Graph Theory},
   volume={82},
   date={2016},
   number={3},
   pages={265--278},
   issn={0364-9024},
   review={\MR{3502764}},
   doi={10.1002/jgt.21900},
}



\bib{MR1606052}{article}{
   author={Pach, J\'anos},
   author={T\'oth, G\'eza},
   title={Graphs drawn with few crossings per edge},
   journal={Combinatorica},
   volume={17},
   date={1997},
   number={3},
   pages={427--439},
   issn={0209-9683},
   review={\MR{1606052}},
   doi={10.1007/BF01215922},
}
\bib{MR3751397}{book}{
   author={Schaefer, Marcus},
   title={Crossing numbers of graphs},
   series={Discrete Mathematics and its Applications (Boca Raton)},
   publisher={CRC Press, Boca Raton, FL},
   date={2018},
   pages={xxvi+350},
   isbn={978-1-4987-5049-3},
   review={\MR{3751397}},
}
\bib{MR4828036}{article}{
   author={Shi, Ruilin},
   author={Walsh, Zach},
   author={Yu, Xingxing},
   title={Dense circuit graphs and the planar Tur\'an number of a cycle},
   journal={J. Graph Theory},
   volume={108},
   date={2025},
   number={1},
   pages={27--38},
   issn={0364-9024},
   review={\MR{4828036}},
   doi={10.1002/jgt.23165},
}


\bib{MR4866462}{article}{
   author={Shi, Ruilin},
   author={Walsh, Zach},
   author={Yu, Xingxing},
   title={Planar Tur\'an number of the 7-cycle},
   journal={European J. Combin.},
   volume={126},
   date={2025},
   pages={Paper No. 104134, 24},
   issn={0195-6698},
   review={\MR{4866462}},
   doi={10.1016/j.ejc.2025.104134},
}

\bib{MR2746706}{article}{
   author={Suzuki, Yusuke},
   title={Re-embeddings of maximum 1-planar graphs},
   journal={SIAM J. Discrete Math.},
   volume={24},
   date={2010},
   number={4},
   pages={1527--1540},
   issn={0895-4801},
   review={\MR{2746706}},
   doi={10.1137/090746835},
}
\bib{MR4305146}{article}{
   author={Suzuki, Yusuke},
   title={1-planar graphs},
   conference={
      title={Beyond planar graphs---communications of NII Shonan meetings},
   },
   book={
      publisher={Springer, Singapore},
   },
   isbn={978-981-15-6532-8},
   isbn={978-981-15-6533-5},
   date={[2020] \copyright 2020},
   pages={47--68},
   review={\MR{4305146}},
   doi={10.1007/978-981-15-6533-5\_4},
}



\bib{MR0018405}{article}{
   author={Tur\'an, Paul},
   title={Eine Extremalaufgabe aus der Graphentheorie},
   language={Hungarian, with German summary},
   journal={Mat. Fiz. Lapok},
   volume={48},
   date={1941},
   pages={436--452},
   issn={0302-7317},
   review={\MR{0018405}},
}


\bib{XuChang25}{article}{
   author={Xu, Weilun},
   author={Chang, An},
   title={The spectral radius of $1$-planar graphs without complete subgraphs},
   journal={arXiv preprint},
   date={2025},
   eprint={2512.12909},
   archivePrefix={arXiv},
   primaryClass={math.CO},
}


\bib{MR4218488}{article}{
   author={Zhang, Xin},
   author={Li, Yan},
   title={Dynamic list coloring of 1-planar graphs},
   journal={Discrete Math.},
   volume={344},
   date={2021},
   number={5},
   pages={Paper No. 112333, 8},
   issn={0012-365X},
   review={\MR{4218488}},
   doi={10.1016/j.disc.2021.112333},
}

\end{biblist}
\end{bibdiv}
\end{document}